\definecolor{dblue}{rgb}{0,0,0.70}
\newtheorem{theorem}{Theorem}[section]	
\newtheorem*{theorem*}{Theorem}
\newaliascnt{lemma}{theorem}
\newtheorem{lemma}[lemma]{Lemma}
\newtheorem*{lemma*}{Lemma}
\newaliascnt{proposition}{theorem}
\newtheorem{proposition}[proposition]{Proposition}
\newaliascnt{corollary}{theorem}
\newtheorem{corollary}[corollary]{Corollary}
\theoremstyle{remark}
\newaliascnt{remark}{theorem}
\newtheorem{remark}[remark]{Remark}
\newaliascnt{question}{theorem}
\newtheorem{question}[question]{Question}
\newtheorem*{question*}{Question}
\newaliascnt{definition}{theorem}
\newtheorem{definition}[definition]{Definition}
\newaliascnt{example}{theorem}
\renewcommand{\restriction}{\mathbin\upharpoonright}
\newcommand{\axiom}[1]{\mathsf{#1}} 
\newcommand{\ZFC}{\axiom{ZFC}}
\newcommand{\AC}{\axiom{AC}}
\newcommand{\DC}{\axiom{DC}}
\newcommand{\ZF}{\axiom{ZF}}
\newcommand{\Ord}{\mathrm{Ord}}
\newcommand{\GCH}{\axiom{GCH}}
\newcommand{\HS}{\axiom{HS}}
\DeclareMathOperator{\cf}{cf}
\DeclareMathOperator{\dom}{dom}
\DeclareMathOperator{\sym}{sym}
\DeclareMathOperator{\stem}{stem}
\DeclareMathOperator{\id}{id}
\DeclareMathOperator{\aut}{Aut}
\DeclareMathOperator{\Col}{Col}
\DeclareMathOperator{\crit}{crit}
\DeclareMathOperator{\otp}{otp}
\DeclareMathOperator{\len}{len}
\DeclareMathOperator{\Ult}{Ult}
\newcommand{\sdiff}{\mathbin{\triangle}}
\newcommand{\forces}{\mathrel{\Vdash}}
\newcommand{\PP}{\mathbb{P}}
\newcommand{\power}{\mathcal{P}}
\newcommand{\QQ}{\mathbb{Q}}
\newcommand{\RR}{\mathbb{R}}
\newcommand{\cU}{\mathcal U}
\newcommand{\sF}{\mathscr F}
\newcommand{\sG}{\mathscr G}
\newcommand{\sH}{\mathscr H}
\newcommand{\cS}{\mathcal S}
\def\ssubset{\mathrel{\vtop{\ialign{##\crcr$\hfil\subset\hfil$\crcr\noalign{\kern1.5pt\nointerlineskip}$\hfil\widetilde{}\hfil$\crcr\noalign{\kern1.5pt}}}}\vspace*{-0.3\baselineskip}}
\def\undertilde#1{\mathord{\vtop{\ialign{##\crcr
$\hfil\displaystyle{#1}\hfil$\crcr\noalign{\kern1.5pt\nointerlineskip}
$\hfil\tilde{}\hfil$\crcr\noalign{\kern1.5pt}}}}}
\newcommand{\tup}[1]{\langle#1\rangle}
\newcommand{\middd}{\mathrel{}\middle|\mathrel{}}
\author{Yair Hayut}
\author{Asaf Karagila}
\thanks{The second author was partially supported by the Austrian Science Foundation FWF, grant I~3081-N35, and by the Royal Society Newton International Fellowship, grant no.~NF170989.}
\address[Yair Hayut]{School of Mathematical Sciences.
Tel Aviv University.
Tel Aviv 69978,
Israel}
\email[Yair Hayut]{yair.hayut@mail.huji.ac.il}
\address[Asaf Karagila]{School of Mathematics,
University of East Anglia.
Norwich, NR4~7TJ, United Kingdom
}
\email[Asaf Karagila]{karagila@math.huji.ac.il}
\date{29 May, 2019}
\subjclass[2010]{Primary 03E25; Secondary 03E55, 03E35}
\keywords{axiom of choice, measurable cardinals, symmetric extensions, supercompact cardinals, elementary embeddings, Levy--Solovay, Silver criterion, critical cardinals}
\title{Critical Cardinals}
\begin{document}
\begin{abstract}We introduce the notion of a \textit{critical cardinal} as the critical point of sufficiently strong elementary embedding between transitive sets. Assuming the Axiom of Choice this is equivalent to measurability, but it is well-known that Choice is necessary for the equivalence. Oddly enough, this central notion was never investigated on its own before. We prove a technical criterion for lifting elementary embeddings to symmetric extensions, and we use this to show that it is consistent relative to a supercompact cardinal that there is a critical cardinal whose successor is singular.
\end{abstract}
\maketitle
\section{Introduction}
Elementary embeddings play a central role in modern set theory. An elementary embedding is nontrivial if it is not the identity on the ordinals, and the least ordinal moved by the embedding is called the \textit{critical point}. We can prove that the critical point is a large cardinal, and requiring the target model be ``more similar to $V$'' provides us with stronger notions of cardinals. 

Assuming the Axiom of Choice we can exchange elementary embeddings for combinatorial or logical properties such as trees, ultrafilters, and compactness properties of infinitary languages . We can also prove there is an upper limit to the large cardinal hierarchy: there is no nontrivial embedding $j\colon V\to V$. However the use of the Axiom of Choice in the known proofs seems to be essential.

Without assuming the Axiom of Choice, combinatorial properties cannot provide us with elementary embeddings. And much of the choiceless work on large cardinals was in the context of obtaining combinatorial properties of large cardinals at ``accessible'' levels (e.g. $\omega_1$ carrying a measure).

In recent years, however, there is a renewed interest in large cardinals without the Axiom of Choice. Woodin's work on the HOD Conjecture and cardinals related to the existence of a nontrivial $j\colon V\to V$ are notable examples, especially since they focus on the largeness formulated in terms of embeddings.

These developments led us to ask what kind of properties we can prove on the structure of the set theoretic universe assuming there is a cardinal which is a critical point of an embedding. We present in this paper the results of this research: first by isolating the notion of a critical cardinal, then by proving a technical theorem which is used to prove that the successor of a critical cardinal can be singular.

\subsection{The structure of this paper}
The paper covers the basic technical preliminaries of supercompact Radin forcing and symmetric extensions in \autoref{section:prelims}. We define critical and weakly critical cardinals in \autoref{section:critical}, and we prove some basic positive results about them. In \autoref{section:silver} we prove a Silver-like criterion for a lifting elementary embedding from the ground model to a symmetric extension.

Finally, in \autoref{section:succ-of-crit} we prove the main theorem. It is consistent, assuming the consistency of a supercompact cardinal, that a critical cardinal's successor is singular. This utilizes the Silver-like criterion with a symmetric extension via a supercompact Radin forcing.

There are open questions throughout the paper. Of course, there are many other naturally arising questions, and we encourage the reader to come up with their own questions, as well as answers to them.
\subsection{Acknowledgements}
The work on this paper started when both authors were Ph.D.\ students of Menachem Magidor at the Hebrew University of Jerusalem, and his help and guidance were paramount for the success of this work. We would also like to thank Arthur Apter for introducing us to some questions that we failed to answer, but led us to these unexpected results. And finally, we thank the anonymous referee for their much appreciated help improving the paper and its presentation.
\section{Technical preliminaries}\label{section:prelims}
Most of our notation and terminology are standard. We use $V$ to denote the ground model in which we work, and if $M$ is a transitive class (or set) and $\alpha$ is smaller than the height of $M$, then $M_\alpha$ will denote the subset of $M$ of sets with rank $<\alpha$. We say that $A\subseteq M$ is \textit{amenable} (to $M$) if for all $\alpha$ below the height of $M$, $A\cap M_\alpha\in M$.

If $\PP$ is a notion of forcing, then it always has a maximum denoted by $1_\PP$, or $1$ when there is no chance of confusion; we will also write $q\leq p$ if $q$ is \textit{stronger} than $p$. We denote $\PP$-names by $\dot x$, and by $\check x$ the canonical $\PP$-name for $x$ in the ground model. If $\dot x$ is a $\PP$-name, we say that a condition $p$ or a name $\dot y$ \textit{appears} in $\dot x$ if there is an ordered pair $\tup{p,\dot y}\in\dot x$.

When $\{\dot x_i\mid i\in I\}$ is a class of names (possibly a proper class), we denote by $\{\dot x_i\mid i\in I\}^\bullet$ the class-name $\{\tup{1,\dot x_i}\mid i\in I\}$. Note that if $I$ is a set, then this is a $\PP$-name. We extend this notation to ordered pairs and sequences as needed. Note that using this notation $\check x=\{\check y\mid y\in x\}^\bullet$.

We use $\DC_\kappa$ to abbreviates the statement Dependent Choice for $\kappa$: Every $\kappa$-closed tree of height $\leq\kappa$ without maximal elements has a cofinal branch. $\DC$ denotes $\DC_\omega$, and $\DC_{<\kappa}$ abbreviates $\forall\lambda<\kappa,\DC_\lambda$.

Our main method for constructing models where the Axiom of Choice fails is symmetric extensions, and our forcing will be a supercompact Radin forcing. For the convenience of the reader, we have included a brief overview on both of these topics.
\subsection{Symmetric extensions}
Symmetric extensions are inner models of generic extensions, where the Axiom of Choice may fail. They are obtained by identifying a particular class of names using permutations of the forcing.

Let $\PP$ be a notion of forcing, and let $\pi$ be an automorphism of $\PP$. Then $\pi$ extends to a permutation of $\PP$-names, defined recursively\[\pi\dot x=\{\tup{\pi p,\pi\dot y}\mid\tup{p,\dot y}\in\dot x\}.\]

We say that $\sF$ is a \textit{normal filter of subgroups}\footnote{This is a terribly unfortunate overlap in the terminology of a normal filter. We will refer to normal ultrafilters in the large cardinals context as ``measures'' to avoid this confusion.} over a group $\sG$ if it is closed under finite intersections and supergroups, and for any $\pi\in\sG$ and $H\in\sF$, $\pi H\pi^{-1}\in\sF$.

If $\PP$ is a notion of forcing, $\sG$ is a group of automorphisms of $\PP$, and $\sF$ is a normal filter of subgroups over $\sG$, then we say that $\tup{\PP,\sG,\sF}$ is a \textit{symmetric system}.\footnote{We will often replace $\sF$ by a filter base which generates it, granted it satisfies the closure under conjugation.}

Let $\tup{\PP,\sG,\sF}$ be a symmetric system. We say that $\PP$-name $\dot x$ is \textit{$\sF$-symmetric} if $\sym_\sG(\dot x)=\{\pi\in\sG\mid\pi\dot x=\dot x\}\in\sF$, and $\dot x$ is \textit{hereditarily $\sF$-symmetric} if it is $\sF$-symmetric and every $\dot y$ which appears in $\dot x$ is hereditarily $\sF$-symmetric. We denote by $\HS_\sF$ the class of all hereditarily $\sF$-symmetric names. If the symmetric system is clear from the context, we omit the subscripts and write $\sym(\dot x),\HS$, etc.

The following are standard in the study of symmetric extensions. The proofs can be found, for example, in \cite{Jech2003} as Lemma~14.37 and 15.51, respectively.
\begin{lemma*}[The Symmetry Lemma]
Suppose that $\pi\in\aut(\PP)$, $\dot x$ is a $\PP$-name, and $\varphi$ is a formula in the language of set theory, then \[p\forces\varphi(\dot x)\iff\pi p\forces\varphi(\pi\dot x).\]
\end{lemma*}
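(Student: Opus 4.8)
The plan is to prove both implications at once by the standard two-layer induction: an outer induction on the syntactic complexity of $\varphi$, resting for the atomic cases on an inner induction on the $\in$-rank of the names involved. First I would record the one load-bearing fact, that $\pi$, being an automorphism of $\PP$, commutes appropriately with generic evaluation. Since $\pi$ preserves $\leq$, $1_\PP$ and incompatibility, it sends dense subsets of $\PP$ to dense subsets and $V$-generic filters to $V$-generic filters, with $V[G]=V[\pi G]$ for every generic $G$ (the two filters are interdefinable over $V$ via $\pi$). Moreover, for every $\PP$-name $\dot x$,
\[(\pi\dot x)^G=\dot x^{\pi^{-1}G}.\]
This is proved by $\in$-induction on $\dot x$: unwinding the recursive definitions of $\pi\dot x$ and of evaluation, $(\pi\dot x)^G=\{(\pi\dot y)^G\mid\tup{p,\dot y}\in\dot x,\ \pi p\in G\}=\{\dot y^{\pi^{-1}G}\mid\tup{p,\dot y}\in\dot x,\ p\in\pi^{-1}G\}=\dot x^{\pi^{-1}G}$, the middle equality being the induction hypothesis.

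Granting this, the lemma follows immediately from the forcing theorem. By the truth lemma, $p\forces\varphi(\dot x)$ holds exactly when $V[G]\models\varphi(\dot x^G)$ for every $V$-generic filter $G$ with $p\in G$; likewise $\pi p\forces\varphi(\pi\dot x)$ holds exactly when $V[H]\models\varphi((\pi\dot x)^H)$ for every $V$-generic $H$ with $\pi p\in H$. As $G$ ranges over the generic filters containing $p$, the filters $H:=\pi G$ range precisely over the generic filters containing $\pi p$, and for each such pair $V[H]=V[G]$ while $(\pi\dot x)^H=\dot x^{\pi^{-1}H}=\dot x^{G}$ by the displayed identity. Hence the two sides of the claimed equivalence quantify $\varphi$ over the same family of evaluations, and we are done.

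A purely syntactic proof is also available if one wishes to avoid generics: the same outer induction on $\varphi$ runs directly from the recursive clauses defining $\forces$, where the propositional connectives are formal, the quantifier case uses that $\pi$ is a bijection of the class of all $\PP$-names, and the atomic cases $\dot x\in\dot y$ and $\dot x=\dot y$ go by induction on name rank, using only that $\pi$ carries the dense sets appearing in those clauses below $p$ to dense sets below $\pi p$. In either approach it is enough to prove one implication, since applying the statement to $\pi^{-1}$, $\pi\dot x$ and $\pi p$ yields the other. The only step that calls for any care is the evaluation identity $(\pi\dot x)^G=\dot x^{\pi^{-1}G}$ together with the check that $\pi$ preserves genericity; once those are in place, the remainder is routine bookkeeping of the automorphism action through the definitions.
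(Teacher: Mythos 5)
Your proof is correct and is essentially the standard semantic argument given in Jech (Lemma 14.37), which is exactly what the paper cites in lieu of a proof: establish $(\pi\dot x)^G=\dot x^{\pi^{-1}G}$ by $\in$-induction, note that $\pi$ preserves genericity and $V[G]=V[\pi G]$, and then invoke the forcing/truth theorem. The optional syntactic variant and the reduction to one implication via $\pi^{-1}$ are also standard and fine.
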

\begin{theorem*}
Suppose that $\tup{\PP,\sG,\sF}$ is a symmetric system and let $G$ be a $V$-generic filter for $\PP$. Then the class $M=\HS^G=\{\dot x^G\mid\dot x\in\HS\}$ is a model of $\ZF$ satisfying $V\subseteq M\subseteq V[G]$.
\end{theorem*}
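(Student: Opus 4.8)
The plan is to verify each axiom of $\ZF$ in $M=\HS^G$ by exhibiting a hereditarily symmetric name for the set witnessing the axiom, the only nonroutine point being that these names genuinely lie in $\HS$, i.e.\ that their symmetry groups lie in $\sF$. Two preliminary observations drive everything. First, $\in$-induction on names shows $\pi\check x=\check x$ for all $\pi\in\sG$ and all $x$, so $\sym(\check x)=\sG\in\sF$ and $\check x\in\HS$; hence $V\subseteq M$, and in particular $\Ord\subseteq M$ and $\check\omega$ witnesses Infinity. Second, again by induction on names, $\HS$ is closed under the action of $\sG$: if $\dot y\in\HS$ and $\pi\in\sG$ then $\pi\dot y\in\HS$, because $\sym(\pi\dot y)=\pi\,\sym(\dot y)\,\pi^{-1}\in\sF$ by normality of $\sF$, and the names occurring in $\pi\dot y$ are $\pi$-images of names occurring in $\dot y$, hereditarily symmetric by induction. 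Note also that if $\pi\in\sym(\dot x)$ then $\pi$ permutes the set of names occurring in $\dot x$, since $\pi$ fixes $\dot x$ as a set of pairs.

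Transitivity of $M$ is immediate from hereditariness, so Extensionality and Foundation are inherited from $V[G]$. For Pairing, given $\dot x,\dot y\in\HS$ the name $\set{\dot x,\dot y}^\bullet$ has $\sym\supseteq\sym(\dot x)\cap\sym(\dot y)\in\sF$ (a finite intersection) and evaluates to $\set{\dot x^G,\dot y^G}$. For Union, given $\dot x\in\HS$ let $\dot u$ be the union of all names occurring in $\dot x$, taken as sets of pairs; since $\sym(\dot x)$ permutes those names, $\sym(\dot u)\supseteq\sym(\dot x)\in\sF$, every name occurring in $\dot u$ is hereditarily symmetric, and $\dot u^G\supseteq\bigcup\dot x^G$, so Union follows once Separation is available.

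Power Set is the first genuinely symmetric-specific point. Fix $\dot x\in\HS$ and let $\mathcal N$ be the set of all $\dot y\in\HS$ such that every name occurring in $\dot y$ also occurs in $\dot x$; this is a set, being included in the power set of a fixed set of pairs. Put $\dot P=\set{\dot y\mid\dot y\in\mathcal N}^\bullet$. If $\pi\in\sym(\dot x)$ then $\pi$ permutes the names occurring in $\dot x$, and by the two preliminary observations $\pi$ maps $\mathcal N$ onto itself, so $\pi\dot P=\dot P$; thus $\sym(\dot P)\supseteq\sym(\dot x)\in\sF$ and $\dot P\in\HS$. Now if $c\subseteq\dot x^G$ with $c\in M$, say $c=\dot c^G$ with $\dot c\in\HS$, pass to the reduced name $\dot c'=\set{\tup{p,\dot z}\mid\dot z\text{ occurs in }\dot x,\ p\forces\dot z\in\dot c}$: the Symmetry Lemma gives $\sym(\dot c')\supseteq\sym(\dot x)\cap\sym(\dot c)\in\sF$, so $\dot c'\in\mathcal N$, and genericity gives $\dot c'^G=c$. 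Hence $\dot P^G$ contains every subset of $\dot x^G$ lying in $M$, and Separation trims $\dot P^G$ to the true power set of $\dot x^G$ in $M$.

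It remains to handle Separation and Replacement, whose common ingredient is the \emph{symmetric forcing relation} $\mathrel{\Vdash}^{\HS}$: the relation defined in $V$ by the usual recursion on formulas but with every name-quantifier restricted to $\HS$-names. By the standard density arguments (which do not presuppose $M\models\ZF$) it satisfies the forcing theorem for $M$, and because the recursion only ever mentions $\HS$, which is $\sG$-stable, it also satisfies the Symmetry Lemma: $p\mathrel{\Vdash}^{\HS}\varphi(\dot x)\iff\pi p\mathrel{\Vdash}^{\HS}\varphi(\pi\dot x)$. For Separation, given $\dot x\in\HS$, parameters $\dot a_1,\dots,\dot a_n\in\HS$ and a formula $\varphi$, the name $\dot s=\set{\tup{p,\dot z}\mid\dot z\text{ occurs in }\dot x,\ p\mathrel{\Vdash}^{\HS}\dot z\in\dot x\wedge\varphi(\dot z,\dot a_1,\dots,\dot a_n)}$ has $\sym(\dot s)\supseteq\sym(\dot x)\cap\bigcap_i\sym(\dot a_i)\in\sF$ by the Symmetry Lemma, lies in $\HS$, and evaluates to $\set{b\in\dot x^G\mid M\models\varphi(b,\vec a)}$. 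For Replacement, if $\varphi$ defines in $M$ a function on $\dot x^G$, then for each name $\dot z$ occurring in $\dot x$ and each $p\in\PP$ there is, by Replacement in $V$, a bound on the ranks of the $\HS$-names $\dot w$ with $p\mathrel{\Vdash}^{\HS}\varphi(\dot z,\dot w,\dot a_1,\dots,\dot a_n)$; letting $\rho$ be the supremum of these bounds, the name $\dot W=\set{\dot w\mid\dot w\in\HS,\ \rank(\dot w)<\rho}^\bullet$ is hereditarily symmetric with $\sym(\dot W)=\sG$, because $\HS$ and $\rank$ are both $\sG$-invariant, and $\dot W^G$ contains the range of the function; Separation finishes. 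The main obstacle throughout is exactly this bookkeeping: each name must be arranged so that its symmetry group contains a finite intersection, a conjugate, or a supergroup of symmetry groups already known to be in $\sF$, and it is precisely for this that the closure properties of $\sF$ and the Symmetry Lemma — now also in its $\mathrel{\Vdash}^{\HS}$ form — are needed.
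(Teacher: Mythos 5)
The paper itself gives no proof of this theorem; it defers to Jech's \textit{Set Theory} (Lemma 15.51), where the result is obtained by checking that $\HS^G$ is a transitive, almost universal class closed under the G\"odel operations, and then invoking the general criterion (Jech's Theorem 13.9) that any such class models $\ZF$. Your proof instead verifies the axioms one by one, which is an equally standard and somewhat more self-contained route. The essential technical ingredients are the same in both packagings: $\check x\in\HS$ so $V\subseteq M$; normality of $\sF$ makes $\HS$ stable under the $\sG$-action; the ``reduced name'' trick (which Jech uses to prove almost universality) is exactly your Power Set argument; and the relativized forcing relation $\forces^\HS$ together with its Symmetry Lemma does the work for Separation and Replacement. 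Your approach buys transparency and avoids having to first establish the G\"odel-operations criterion, at the cost of some length.

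One small point in your Replacement argument deserves tightening. You write that for each $\dot z$ occurring in $\dot x$ and each $p\in\PP$ there is ``a bound on the ranks of the $\HS$-names $\dot w$ with $p\forces^\HS\varphi(\dot z,\dot w,\vec a)$.'' Read literally, this asserts the class of such witnesses is bounded in rank, which is not obviously true (and is not what is needed). The correct move is to take, for each pair $(\dot z,p)$ for which a witness exists, the \emph{minimum} rank of a hereditarily symmetric witness; this is a definable function on a set of pairs, so Replacement in $V$ applies to its range and yields the supremum $\rho$. Your subsequent construction of $\dot W$ and the observation $\sym(\dot W)=\sG$ then go through unchanged. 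The rest of the proposal — including the normality argument that $\HS$ is $\sG$-stable, the observation that $\sym(\dot x)$ permutes the names appearing in $\dot x$, the reduced-name computation for Power Set, and the use of $\forces^\HS$ — is correct.
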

The class $M$ is called a \textit{symmetric extension}. And we can define a forcing relation $\forces^\HS$ satisfying the usual forcing theorem, and even the Symmetry Lemma for $\pi\in\sG$.

\begin{definition}
Let $\tup{\PP,\sG,\sF}$ be a symmetric system. We say that $D\subseteq\PP$ is a \textit{symmetrically dense} set if there is some $H\in\sF$ such that for all $\pi\in H$, $\pi``D=D$. We say that a filter $G\subseteq\PP$ is \textit{symmetrically $V$-generic} if it is a filter and for all symmetrically dense open sets $D\in V$, $D\cap G\neq\varnothing$.
\end{definition}

The notion of symmetrically generic filters is the one needed to interpret correctly the names in $\HS$. This is reflected in the following theorem \cite[Theorem~8.4]{Karagila2016}.
\begin{theorem*}
Let $\tup{\PP,\sG,\sF}$ be a symmetric system, and $\dot x\in\HS$. Then the following are equivalent:
\begin{enumerate}
\item $p\forces^\HS\varphi(\dot x)$.
\item For every symmetrically $V$-generic filter $G$ such that $p\in G$, $\HS^G\models\varphi(\dot x^G)$.
\item For every $V$-generic filter $G$ such that $p\in G$, $\HS^G\models\varphi(\dot x^G)$.
\end{enumerate}
\end{theorem*}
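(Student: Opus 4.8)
The plan is to reduce the whole statement to the \emph{forcing theorem} for the symmetric forcing relation $\forces^\HS$. Recall that $\forces^\HS$ is defined by the usual Cohen recursion, but with all quantifiers ranging over $\HS$: the atomic cases are handled by the customary double recursion on names, and $p\forces^\HS\exists v\,\varphi(v,\dot x)$ is declared to hold exactly when $\set{q\leq p\mid\exists\dot y\in\HS,\ q\forces^\HS\varphi(\dot y,\dot x)}$ is dense below $p$. I would first record the routine properties of this relation, each proved by induction on formulas exactly as in the non-symmetric case: it is monotone; it is \emph{decisive}, i.e.\ below any condition there is one deciding any prescribed formula with $\HS$-parameters; it respects propositional and quantifier logic; and, since $\sG$ permutes $\HS$ because $\sF$ is a normal filter of subgroups, it satisfies the Symmetry Lemma $p\forces^\HS\varphi(\dot x)\iff\pi p\forces^\HS\varphi(\pi\dot x)$ for all $\pi\in\sG$.

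The heart of the matter is a \emph{Truth Lemma}: if $G$ is symmetrically $V$-generic and $\dot x\in\HS$, then $\HS^G\models\varphi(\dot x^G)$ if and only if there is $p\in G$ with $p\forces^\HS\varphi(\dot x)$. Granting this, all three equivalences fall out quickly. The implication $(1)\Rightarrow(2)$ is precisely the ``forcing to truth'' direction of the Truth Lemma; $(1)\Rightarrow(3)$ is the same direction applied to $V$-generic filters, which are symmetrically $V$-generic because a symmetrically dense open set is in particular dense and open. For the converses: if $p\not\forces^\HS\varphi(\dot x)$, decisiveness produces $q\leq p$ with $q\forces^\HS\neg\varphi(\dot x)$; passing to a $V$-generic $G\ni q$ the Truth Lemma gives $\HS^G\models\neg\varphi(\dot x^G)$ while $p\in G$, which contradicts both $(2)$ and $(3)$. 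So everything rests on the Truth Lemma.

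I would prove the Truth Lemma by simultaneous induction on $\varphi$, the atomic case being run by the usual induction on the pair of names; the biconditional must be proved in both directions at once, since the $\neg$-step needs the ``truth to forcing'' direction at the subformula. The one genuinely new point — and why this is more than the ordinary forcing theorem — is that a symmetrically generic filter meets \emph{only} those dense open sets that are invariant under some $H\in\sF$, whereas the dense sets produced by the recursion defining $\forces^\HS$, such as $\set{q\leq p\mid\exists\tup{s,\dot z}\in\dot y,\ q\leq s\wedge q\forces^\HS\dot x=\dot z}$ in the atomic membership case, are only dense \emph{below} the ambient condition $p$, and $p$ itself is fixed by no large subgroup. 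The device that fixes this is to enlarge such a set $D$ not by $\set{q\mid q\perp p}$, which wrecks symmetry, but by $\set{q\mid q\forces^\HS\neg\psi(\dot x)}$, where $\psi(\dot x)$ is the atomic or existential statement at hand: decisiveness shows that $E:=D\cup\set{q\mid q\forces^\HS\neg\psi(\dot x)}$ is genuinely dense and open, while the Symmetry Lemma for $\forces^\HS$ together with $\dot x,\dot y\in\HS$ shows $E$ is invariant under $H:=\sym(\dot x)\cap\sym(\dot y)\in\sF$; hence $E$ is symmetrically dense open and $G$ meets it. A witness $r\in G\cap E$ either forces $\neg\psi(\dot x)$, which is impossible because it would be incompatible with any $p\in G$ forcing $\psi(\dot x)$, or it lies in $D$, and then the inductive hypothesis applied to the name- or formula-simpler witness supplied by membership in $D$ closes the step; the $\exists$-case uses the identical trick with $D=\set{q\mid\exists\dot y\in\HS,\ q\forces^\HS\varphi(\dot y,\dot x)}$ and $\psi=\exists v\,\varphi(v,\dot x)$, and the propositional connectives are routine. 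I expect the main obstacle to be exactly this bookkeeping: arranging at every node of the induction that the relevant dense set is replaced by a symmetric one, which is what forces the appeal to decisiveness and to the Symmetry Lemma for $\forces^\HS$ in place of bare density.
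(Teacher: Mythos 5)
The paper itself does not prove this theorem; it is quoted verbatim from \cite[Theorem~8.4]{Karagila2016} as a black box. So there is no in-paper proof to compare against, but your argument is essentially the standard one and, as far as I can tell, is the same argument used in the cited source. The key move — replacing the dense-below-$p$ sets arising in the Cohen recursion by the symmetric enlargement $E = D \cup \{q \mid q \forces^\HS \neg\psi(\dot x)\}$, which is open, dense by decisiveness, and invariant under $\bigcap_i \sym(\dot x_i) \in \sF$ by the Symmetry Lemma for $\forces^\HS$ and the normality of $\sF$ — is exactly the device that makes the Truth Lemma go through for symmetrically generic filters; and once the Truth Lemma is in hand, the three-way equivalence follows as you describe, with $(2)\Rightarrow(3)$ being the trivial inclusion of $V$-generic filters among symmetrically $V$-generic ones and the converses coming from decisiveness plus the forcing-to-truth direction. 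Two small points of hygiene worth recording in a final write-up: (i) the set $D$ must be formulated globally (drop the clause ``$q\leq p$'') before taking the union with the $\neg\psi$-part, so that $E$ is genuinely open and not merely dense below $p$ — you clearly intend this but the displayed $D$ still carries the ``$\leq p$''; (ii) when you argue that $r\in G$ cannot force $\neg\psi$ because $p\in G$ forces $\psi$, you are invoking the consistency property of $\forces^\HS$ (no two compatible conditions force contradictory statements), which should be listed alongside monotonicity and decisiveness among the routine facts established by the preliminary induction. With those points made explicit the proof is complete.
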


Finally, while we do not discuss iterations of symmetric extensions in full, not even in the case of a two-step iteration, it will be conceptually relevant to the Silver-like criterion for lifting elementary embeddings to symmetric extensions, so we urge the reader to glance through the second author's \cite{Karagila2016}. One definition from that context is relevant to this work, and that is the generic semi-direct product of groups.

\begin{definition}
Suppose that $\tup{\PP,\sG,\sF}$ is a symmetric system and $\dot\QQ,\dot\sH\in\HS$ such that
\begin{enumerate}
\item $\sym(\dot\QQ)=\sym(\dot\sH)=\sG$, and
\item $\forces_\PP\dot\sH$ is a group of automorphisms of $\dot\QQ$.
\end{enumerate}
The \textit{generic semi-direct product} $\sG\ast\dot\sH$ is the group of automorphisms of $\PP\ast\dot\QQ$ of the form $\tup{\pi,\dot\sigma}$,\footnote{In \cite{Karagila2016} the notation is different, but since we will not be applying these automorphisms, this is less important.} such that $\pi\in\sG$ and $\forces\dot\sigma\in\dot\sH$, and the action of $\sG\ast\dot\sH$ on $\PP\ast\dot\QQ$ is \[\tup{\pi,\dot\sigma}(p,\dot q)=\tup{\pi p,\pi(\dot\sigma\dot q)}.\]
\end{definition}
We remark that in the case of a product of two symmetric systems, considering $\sH$ and $\QQ$ with their canonical names, we actually obtain the product of $\sG\times\sH$ with the natural action on $\PP\times\QQ$.
\subsection{Supercompact Radin forcing}
Our other main technical forcing tool will be supercompact Radin forcing, we will generally follow \cite{Krueger2007} for the presentation of the forcing. For the completeness of this work, we provide the definition of the forcing and a few of its basic properties. For the rest of the discussion on supercompact Radin forcing, $\kappa$ will be a fixed supercompact cardinal.\footnote{We can generally replace ``supercompact'' by ``$\lambda$-supercompact'' for a sufficiently large $\lambda$ in all of our proofs, which of course matters for exact consistency strength, but clutters the text. We leave the calculation of the exact $\lambda$ for the interested reader.}

If $a$ is a set of ordinals, we will write $\pi_a$ as the Mostowski collapse of $a$, which is the isomorphism between $a$ and its order type. We will denote by $\power_\kappa(A)$ the set $\{a\subseteq A\mid |a|<\kappa\}$. For $x,y\in\power_\kappa(\lambda)$, we say that $x$ is a \textit{strong subset} of $y$, if $x\subseteq y$ and $|x|<y\cap\kappa$. We shall denote this by $x\ssubset y$. 

We follow Krueger \cite{Krueger2007} for the presentation of supercompact Radin forcing. We define a \textit{coherent sequence of supercompact measures on $\power_\kappa(\lambda)$} as a sequence of measures $\cU=\tup{U(\alpha,i)\mid i<o^\cU(\alpha),\alpha\leq\kappa}$ where $o^\cU\colon\kappa+1\to\Ord$ is a partial function satisfying the following properties. 
\begin{enumerate}
\item There is a function $\beta\mapsto\lambda_\beta$ defined on the the domain of $o^\cU$ such that $\lambda_\kappa=\lambda$, $\lambda_\beta\geq\beta$ is a cardinal, and if $o^\cU(\alpha)$ is defined, then $\lambda_\beta<\alpha$ for all $\beta<\alpha$. Moreover, for $\alpha$ in the domain of $o^\cU$ and $i<o^\cU(\alpha)$, \[\{a\in\power_\alpha(\lambda_\alpha)\mid\otp(a)=\lambda_{a\cap\alpha}\}\in U(\alpha,i).\]
\item Each $\tup{U(\alpha,i)\mid i<o^\cU(\alpha)}$ is a sequence of normal measures on $\power_\alpha(\lambda_\alpha)$.
\item For each $\alpha\in\dom o^\cU$ and $i<o^\cU(\alpha)$, the set $\{x\in\power_\alpha(\lambda_\alpha)\mid x\cap\alpha\in\dom o^\cU\}$ is in $U(\alpha,i)$.  \item For any $\beta< o^\cU(\alpha)$, if $j\colon V\to\Ult(V,U(\alpha,\beta))$ is the ultrapower embedding, then $j(\tup{U(\beta,i)\mid i<o^\cU(\beta),\beta<\alpha})(\alpha)=\tup{U(\alpha,i)\mid i<\beta}$.
\end{enumerate}

If $\cU$ is a coherent sequence of supercompact measures, we say that $\rho$ is a \textit{repeat point} of $\cU$ if 
\[\bigcap_{\zeta < \rho} U(\kappa, \zeta) = \bigcap_{\zeta \leq \rho} U(\kappa, \zeta),\]
or equivalently, if for every $X \in U(\kappa,\rho)$ there is $\zeta < \rho$ such that $X \in U(\kappa,\zeta)$. By counting arguments, if $o^{\cU}(\kappa) = (2^{\lambda^{<\kappa}})^+$ then $\cU$ has repeat points.

Fix $\lambda>\kappa$, and let $\Lambda\colon\kappa\to\kappa$ be a function such that some $(2^{\lambda^{<\kappa}})^+$-supercompact embedding $j$ exists with $\crit(j)=\kappa$ and $j(\Lambda)(\kappa)=\lambda$. By supercompactness of $\kappa$, there is a coherent sequence of measures $\cU=\tup{U(\alpha,i)\mid i<o^\cU(\alpha),\alpha\leq\kappa}$, where $U(\alpha,i)$ is a measure on $\power_\alpha(\Lambda(\alpha))$ (see \cite[Section~2]{Krueger2007}).

Normally the Radin forcing derived from a coherent sequence of measures, $\cU$, is denoted by $\RR(\cU)$, but since in our case $\cU$ will always be clear from context we will simply write $\RR$. Krueger also defines $A_\cU$ to be a superset of all the ``large sets'' in the Radin forcing. $A_\cU$ is the following set, \[\left\{a\in\power_\kappa(\lambda)\middd\begin{array}{l}a\cap\kappa\text{ is a cardinal,}\\ o^\cU(a\cap\kappa)\text{ is defined, and }\\ \otp(a)=\Lambda(a\cap\kappa)\end{array}\right\}.\] To simplify our notation, especially as we omit $\cU$, we will abuse the notation and write $\power_\kappa(\lambda)$ to mean $A_\cU$.

We follow Krueger's definition which appears in \cite[Section~3]{Krueger2007} with a few inconsequential modifications. A condition $p$ in $\RR$ is a finite sequence $\tup{d_0,\ldots,d_n}$ such that for all $i\leq n$,
\begin{enumerate}
\item if $i<n$
    \begin{enumerate}
	\item $d_i\in\power_\kappa(\lambda)$, or
	\item $d_i=\tup{x_i,A_i}$ such that $x_i\in\power_\kappa(\lambda)$ and $\pi_{x_i}``A_i\in\bigcap_{\beta<o^\cU(\kappa_i)}U(\kappa_i,\beta)$, where $\kappa_i=x_i\cap\kappa$,
	\end{enumerate}
\item $d_n=\tup{\lambda,A_n}$ where $A_n$ is in $\bigcap_{\alpha<o^\cU(\kappa)} U(\kappa,\alpha)$.
\item if $i<j\leq n$, then $x_i\ssubset x_j$, and if $d_j=\tup{x_j,A_j}$, then for all $a\in A_j$, $a_i\ssubset a$.
\end{enumerate}

We define $\len(p)=n$ to be the length of $p$ (in particular a condition of length $0$ has only $\tup{\lambda,A_0}$ in it).

Given two conditions $p,q\in\RR$, such that $p=\tup{d_0,\ldots,d_n}$ with $d_i=x_i$ or $d_i=\tup{x_i,A_i}$, and $q=\tup{e_0,\ldots,e_m}$ with $e_i=y_i$ or $e_i=\tup{y_i,B_i}$, we write $q\leq p$ if:
\begin{enumerate}
\item $n\leq m$,
\item $B_m\subseteq A_n$,
\item if $n>0$, then there are $i_0<\dots<i_{n-1}<m$ such that for all $k<n$,
	\begin{enumerate}
	\item if $d_k\in\power_\kappa(\lambda)$, then $d_k=e_{i_k}$,
	\item if $d_k=\tup{x_k,A_k}$, then $e_{i_k}=\tup{x_k,B_{i_k}}$, where $B_{i_k}\subseteq A_k$,
\end{enumerate}
\item for each $l<m$, such that $l$ is not any $i_k$,
	\begin{enumerate}
	\item if $n=0$, or if $l>i_n$, then either $e_l\in A_n$ or $e_l=\tup{y_l,B_l}$ with $y_l\in A_n$ and $B_l\subseteq A_n$,
	\item if $n>0$ and $k$ is the least such that $l<i_k$, then $d_k$ has the form $\tup{a_k,A_k}$ and either $e_l\in A_k$ or $e_l=\tup{y_l,B_l}$ where $b_l\in A_k$ and $B_l\subseteq A_k$.
	\end{enumerate}
\end{enumerate}
If $q\leq p$ and $m=n$, we write $q\leq^* p$ and we say that $q$ is a \textit{direct extension} of $p$.  

We denote by $\stem p$ the tuple $\tup{x_0, \dots, x_{n-1}}$. It is routine to verify that every two conditions with the same stem are compatible.

\begin{proposition}The forcing $\RR$ has several important combinatorial properties:
\begin{enumerate}
\item If $p=\tup{d_0,\ldots,d_n}$ and $\xi<\kappa$ such that whenever $d_i=\tup{x_i,A_i}$, we have that $\xi<x_i\cap\kappa$, then for any $\{p_i\mid i<\xi\}$ such that $p_i\leq^* p$ for all $i$, there is $q\leq^* p_i$ for all $i<\xi$ (\cite[Lemma~3.1]{Krueger2007}).
\item It satisfies the $(\lambda^{<\kappa})^+$-c.c. (\cite[Proposition~3.2]{Krueger2007}).
\item If $p$ is a condition and $m<\len(p)$ such that $d_m=\tup{x_m,A_m}$, then $\RR\restriction p$ is isomorphic to the product $\RR\restriction p^{<m}\times\RR\restriction p^{\geq m}$, where $p^{\geq m}$ is a condition in $\RR$, and $\RR\restriction p^{<m}$ is a condition in a Radin forcing defined on a suitable measure in the sequence $\cU$. Moreover, the isomorphism also respects direct extensions. (See \cite[Section~4]{Krueger2007} for the exact details.)
\item The Prikry property with respect to $\leq^*$ (\cite[Section~5]{Krueger2007}).
\end{enumerate}
\end{proposition}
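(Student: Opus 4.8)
The plan is to treat the four items essentially independently, proving the two light ones by hand and pointing to Krueger's analysis \cite{Krueger2007} for the two that require the full bookkeeping of the forcing.

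\emph{Items (1) and (2).} Unwinding the definition of $\leq^*$ one sees that a direct extension of $p=\tup{d_0,\dots,d_n}$ inserts nothing new: it only replaces each large set $A_j$ (for coordinates $j$ with $d_j=\tup{x_j,A_j}$) and the top large set $A_n$ by subsets, keeping all stem entries fixed. Thus each $p_i$ is given by large sets $A_{i,j}\subseteq A_j$ and $A_{i,n}\subseteq A_n$. For a fixed $j$ with $\kappa_j=x_j\cap\kappa$, the filter $\bigcap_{\beta<o^\cU(\kappa_j)}U(\kappa_j,\beta)$ is $\kappa_j$-complete, being an intersection of $\kappa_j$-complete measures, hence closed under intersections of at most $\xi$ sets since $\xi<\kappa_j$; the same holds for $\bigcap_{\alpha<o^\cU(\kappa)}U(\kappa,\alpha)$ as $\xi<\kappa$. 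Hence $\bigcap_{i<\xi}\pi_{x_j}``A_{i,j}$ and $\bigcap_{i<\xi}A_{i,n}$ are still large, and the condition obtained from $p$ by installing these as the new large sets is a common direct extension $q\leq^* p_i$ for all $i<\xi$. For (2), by the remark preceding the proposition any two conditions with the same stem are compatible, so an antichain contains at most one condition per stem; a stem is a finite sequence of elements of $\power_\kappa(\lambda)$ (i.e.\ of $A_\cU$), and $|\power_\kappa(\lambda)|=\lambda^{<\kappa}$, so there are only $\lambda^{<\kappa}$ stems, whence every antichain has size $\leq\lambda^{<\kappa}$.

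\emph{Item (3).} The factoring is read directly off the definitions: an entry $d_m=\tup{x_m,A_m}$ cleanly separates a condition into the part living strictly below $x_m$ and the part from $x_m$ up to $\lambda$. The coherence axioms on $\cU$ guarantee that the image of $\cU$ under the collapse $\pi_{x_m}$ is itself a coherent sequence of supercompact measures on $\power_{\kappa_m}(\otp x_m)$, so the lower part is governed by the Radin forcing it induces, while the upper part is $\RR\restriction p^{\geq m}$; clause~(3) in the definition of a condition and clauses (3)--(4) in the definition of $\leq$ are exactly what is needed to check that independently strengthening the two parts reconstitutes a strengthening of $p$, and that this bijection preserves both $\leq$ and $\leq^*$. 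The bookkeeping is routine but lengthy; we refer to \cite[Section~4]{Krueger2007}.

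\emph{Item (4).} One proves by induction on $\len(p)$ that every statement $\varphi$ of the forcing language is decided by some $q\leq^* p$. For each candidate one-step extension of $p$ one uses (3) to factor off the block below the newly inserted point and the inductive hypothesis to directly extend that block so as to decide $\varphi$ (or to record that it cannot be decided that way); one then uses the completeness of $\bigcap_{\alpha<o^\cU(\kappa)}U(\kappa,\alpha)$ to restrict to a large set of points on which the behaviour is homogeneous, and finally uses (1) to amalgamate the directly-extended large sets produced for the individual points into a single direct extension. The details are as in \cite[Section~5]{Krueger2007}.

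The main obstacle is (4): it is the only item whose argument genuinely combines (1)--(3), and the amalgamation step — preserving a homogeneous decision while simultaneously shrinking to a single direct extension over a measure-one set — is precisely where all the completeness hypotheses built into the coherent sequence $\cU$ are consumed. Item~(3) is more tedious to write out but carries no real mathematical content beyond the coherence of $\cU$, and items (1)--(2) are elementary.
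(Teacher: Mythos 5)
The paper supplies no proof of this proposition at all — it simply points to \cite{Krueger2007} for each item. Your proposal takes the same route, deferring to Krueger for the genuinely involved items (3) and (4), and the short arguments you supply for (1) (the filter $\bigcap_{\beta<o^\cU(\kappa_j)}U(\kappa_j,\beta)$ is $\kappa_j$-complete, so $\xi<\kappa_j$ suffices to intersect the shrunken large sets) and for (2) (at most $\lambda^{<\kappa}$ stems, and same-stem conditions are compatible) are correct.
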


If $G$ is a $V$-generic filter for $\RR$, then the \textit{Radin club}, $C_G$ is the generic club in $\power_\kappa(\lambda)^V$ defined as\[C_G=\{a\in\power_\kappa(\lambda)\mid\exists p\in G\ a\in\stem p\}.\]
For every $a\in C_G$, let $\kappa_a=a\cap\kappa$, then $\{\kappa_a\mid a\in C_G\}$ is a club in $\kappa$ which we will denote by $C_\kappa$.

The Radin forcing collapses all the cardinals in the interval $(\alpha,\Lambda(\alpha)^{<\alpha})$ for every $\alpha\in C_\kappa$, and depending on the length of $\cU$, $\kappa$ can remain regular, measurable, or even supercompact in the generic extension.\footnote{We again leave the subtle question of determining the exact length needed for the interested reader.}

To wrap the overview of Radin forcing, we will need to use a technique that lets us construct generic filters over inner models. The following theorem is a mild modification of an unpublished theorem of Woodin.
\begin{theorem}\label{thm:cummings-woodin}
Let $p\in\RR$ be a condition in the supercompact Radin forcing. There is an elementary embedding $k\colon V\to M$ such that in $V$ there is an $M$-generic filter $H$ for $k(\RR)$ which is compatible with $k(p)$.
\end{theorem}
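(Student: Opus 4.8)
The plan is to realise $k$ as a long iterated ultrapower of $V$ and to read the generic filter off the critical data of that tower, in the spirit of the classical fact that the critical sequence of an iterated ultrapower by a normal measure is Prikry-generic over the final model. Before building the tower I would dispose of the condition $p$. Writing $p=\tup{d_0,\ldots,d_n}$ with stem $\tup{x_0,\ldots,x_{n-1}}$ and applying the factoring property of $\RR$ (item~(3) above) finitely often, $\RR\restriction p$ is presented as a product of supercompact Radin forcings built from smaller measures in $\cU$ together with tail pieces, in a way that commutes with elementary embeddings. It is thus enough to build an $M$-generic for each factor and to arrange that $\tup{k(x_0),\ldots,k(x_{n-1})}$ appears as the $\ssubset$-least initial segment of the generic Radin club, with the subsequent elements falling into the relevant $k(A_i)$; concretely this just prepends finitely many prescribed steps to the tower described next. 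So I assume $p=1_\RR$.

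Next I would build an iteration $\tup{M_\xi,j_{\xi\eta}\mid\xi\le\eta\le\theta}$ with $M_{\xi+1}=\Ult(M_\xi,W_\xi)$, where $W_\xi$ is a measure from the coherent sequence $j_{0\xi}(\cU)$ of $M_\xi$ whose index is prescribed at stage $\xi$ by the recursive branching pattern of a Radin condition (a ``Radin iteration''), and with direct limits at limit stages. Since each $W_\xi$ is at least countably complete, the whole tower is wellfounded, and we identify the $M_\xi$ with their transitive collapses. Put $k=j_{0\theta}$ and $M=M_\theta$. By the coherence conditions (1)--(4) on $\cU$, the seeds $[\id]_{W_\xi}$ of the successive ultrapowers assemble into a $\ssubset$-increasing chain of elements of $\power_{k(\kappa)}(k(\lambda))^M$ which $M$ recognises as a club; that chain, together with the canonical measure-one sets produced along the tower, generates a filter $H\in V$ on $k(\RR)$, which by the previous paragraph is compatible with $k(p)$.

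That $H$ is $M$-generic is the heart of the argument, and here I would imitate the Prikry case. Given a dense open $D\in M$, choose $\theta$ large enough, and of suitable cofinality, that $D=j_{\xi\theta}(\bar D)$ for some $\xi<\theta$ and $\bar D\in M_\xi$; the $(\lambda^{<\kappa})^+$-chain condition of $\RR$ (item~(2) above) keeps dense sets small enough that this can always be arranged before the end of the tower. Every step after stage $\xi$ is an ultrapower by a normal measure containing the appropriate trace of $\bar D$, so the Prikry property with respect to $\leq^*$ (item~(4) above) drives the tail of the generic club into $D$; this is the analogue, for Radin forcing, of Mathias's genericity criterion.

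The main obstacle is organising the ``Radin recursion'' that selects the measures $W_\xi$: one must check at once that it is well-defined, that the resulting critical data really is a Radin-generic club over the direct limit $M$ — which is precisely what the coherence conditions (1)--(4) on $\cU$, Radin's genericity criterion, and the factor lemma are there to secure — and that $\theta$ can be taken long enough to capture every dense subset of $k(\RR)$ in $M$ (here the length of $\cU$, in particular the existence of repeat points discussed above, provides the needed room) while keeping the whole tower wellfounded and the assembled club genuinely an element of $M$. Once all of this is in place, the reduction to $p=1_\RR$ in the first paragraph delivers compatibility with $k(p)$.
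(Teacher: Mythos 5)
Your overall strategy — build an iterated ultrapower of $V$ by measures drawn from the coherent sequence, let the critical data assemble into a Radin club over the direct limit, and reduce to the empty-stem case by factoring — is exactly the paper's strategy, and the reduction in your first paragraph is essentially the same as the paper's (the paper additionally uses the strong Prikry property, \autoref{lemma:strong-prikry-property}, to get mutual genericity of the factors). However, you flag the selection of the measures $W_\xi$ and the genericity of the resulting club as ``the main obstacle,'' and then leave it unfilled; this is a genuine gap, because that recursion is the actual content of the theorem.

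Concretely, the paper does not fix a length $\theta$ in advance and hope that ``suitable cofinality'' plus the $(\lambda^{<\kappa})^+$-c.c.\ suffices. Instead, the recursion is self-terminating: at stage $\alpha$ one takes the \emph{least} $\gamma<j_\alpha(o^\cU(\kappa))$ such that the measure $\cU_\alpha(\kappa_\alpha,\gamma)$ is not already realized as the ``tail filter'' generated by the seeds produced so far, and forms the ultrapower by that measure; if every measure is already realized, the recursion halts. Proving that the recursion halts at some (limit) ordinal is itself a nontrivial Fodor-style argument on a large regular $\mu$, pressing down the ``definitions'' of the chosen ordinals $\gamma_\alpha$ to find a stage that contradicts minimality. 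Your proposal to argue genericity by picking $\theta$ so that each dense $D\in M$ is $j_{\xi\theta}(\bar D)$ for some $\xi<\theta$ is automatically true in a direct limit and therefore buys nothing by itself; the chain condition plays no role in the paper's genericity argument. What actually drives genericity is that, because the process halts only when every measure on $\kappa_\delta$ is realized as a tail filter, every measure of $M_\delta$ occurs cofinally often among the ultrapower steps, so the seed sequence threads through any finite fat tree produced by \autoref{lemma:strong-prikry-property} — the Prikry property alone is not enough here, you genuinely need the tree form. Without the explicit recursion rule, the halting argument, and the precise appeal to the strong Prikry property, the ``Radin recursion'' is a placeholder rather than a proof.
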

Before proving the theorem, we will need the following technical lemma, which shows that $\RR$ satisfies a strong Prikry property.
\begin{lemma}\label{lemma:strong-prikry-property}
Let $D\subseteq\RR$ be a dense open set, and let $p\in\RR$ be a condition. Denote by $x_0,\ldots,x_{n-1}$ the stem of $p$. There exists a condition $p^*\leq^* p$, a finite sequence of natural numbers $m_0,\ldots,m_n$, and a finite sequence of trees $T_0,\ldots,T_n$ such that
\begin{enumerate}
\item $T_i$ is a tree of height $m_i$ of candidates for extending the $i$th member of the stem of $p^*$,
\item $T_i$ is fat, in the sense that for any $\vec t=\tup{t_0,\ldots, t_{k-1}}\in T_i$ with $k<m_i$, there is $\gamma_{\vec t}<o^\cU(x_i\cap\kappa)$ such that the possible values for extending $\vec t$ inside $T_i$ is a set in $U(x_i\cap\kappa,\gamma_{\vec t})$.
\item Any extension of $p^*$ using branches through the trees lies in $D$.
\end{enumerate}
\end{lemma}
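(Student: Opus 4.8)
The plan is to prove this by induction (essentially on $\len(p)$, uniformly over supercompact Radin forcings of the same type lower in the sequence $\cU$), the engine being the Prikry property (item~(4) of the Proposition above) together with the direct-extension amalgamation lemma (item~(1)). Recall that an arbitrary extension $q\le p$ is built by inserting, below each marked coordinate $\tup{x_i,A_i}$ of $p$ and below the top coordinate $\tup{\lambda,A_n}$, a finite, itself recursively structured, block of new points drawn from $A_i$, respectively from $A_n$, together with possibly shrinking the associated large sets. Accordingly the tree $T_i$ is meant to catalogue the blocks that may be inserted into the $i$th gap of $p^*$: a branch of $T_i$ of full height $m_i$ lists the points (with their side data) to be inserted there, and the branching of $T_i$ at a node records the dependence of the next admissible point on the earlier choices, clause~(2) demanding that this dependence be measure one for a single measure $U(\bar\kappa,\gamma)$ at each node.

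The base case, $\len(p)=0$ with $p=\tup{\lambda,A_0}$ and a single gap, is the heart of the argument. For each candidate block $\vec t$ that might be inserted below $\lambda$, the Prikry property applied to the condition obtained from $p$ by inserting $\vec t$ yields a direct extension deciding membership in $D$. Using normality of the measures $U(\bar\kappa,\beta)$ together with the amalgamation lemma, I would diagonalise over all candidate blocks at once, producing a single $p^*\le^* p$ and, in parallel, the tree $T_0$ of blocks along which $D$ has been reached, whose branching sets are each measure one for a single measure in the sequence; this is clause~(2). Since $D$ is open, once such a block has been inserted one remains in $D$ under all further refinements, which is clause~(3); reconciling the single-measure fatness of $T_0$ with the requirement that the ambient large set of $p^*$ stay large for \emph{all} the measures at $\bar\kappa$ is handled via the coherence of $\cU$ and is routine bookkeeping.

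The main obstacle, and essentially the only nonroutine point, is the existence of the uniform finite height $m_0$: why does a bounded number of inserted points always suffice to enter $D$? This is a well-foundedness argument. Consider the tree $S$ of candidate blocks along which $D$ has \emph{not} yet been reached. If $S$ were ill-founded, an infinite branch would supply an $\omega$-sequence of successive insertions never meeting $D$; using $\bar\kappa$-completeness of the measures and the amalgamation lemma one assembles these into a single direct extension $q\le^* p^*$ no extension of which lies in $D$, contradicting the density of $D$. Hence $S$ is well-founded, and since its branching at each node is along a measure-one set, $\bar\kappa$-completeness pins the rank of $S$ below $\bar\kappa$; a final application of the Prikry property, deciding on a measure-one set the exact number of further points needed, collapses this ordinal rank to a genuine natural number $m_0$ and simultaneously trims $S$ to the fat tree $T_0$.

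The passage from the single-block base case to a general condition $p$ is routine given the product decomposition (item~(3)): splitting $\RR\restriction p$ at its marked coordinates expresses it as a finite product of single-block Radin forcings, and one applies the base case to each factor in turn against the dense open set obtained from $D$ by freezing the coordinates in the remaining factors, which is again dense open since $D$ is open. The product of the resulting direct extensions of the factors is the required $p^*\le^* p$, the single-block trees produced are the $T_0,\dots,T_n$ with heights $m_0,\dots,m_n$, and clauses~(1)--(3) for $p$ follow from the base case together with the openness of $D$ and the fact that the isomorphism of item~(3) respects $\le^*$.
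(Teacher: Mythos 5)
Your route is genuinely different from the paper's. The paper works directly with an $\RR$-name $\dot r$ for the $\leq$-least element of $D\cap\dot G$, minimizing its length $\dot m$ and the positions $\dot i_j$ of the old stem members inside $\stem\dot r$, and then uses the Prikry property together with the closure of $\leq^*$ to decide $\dot m$ and the $\dot i_j$ by a single direct extension $p^*\leq^* p$. Finiteness of the $m_i$ is then automatic because $\dot m$ names a natural number and gets decided outright; the trees are built by iterating the decision procedure on each successive new stem element, with normality supplying the measure $\gamma_{\vec t}$ at each node. There is no rank argument and no product decomposition in the paper's proof of this lemma.

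The central step of your proposal, the finiteness of $m_0$, has a real gap. You argue that an infinite branch $t_0,t_1,\dots$ through the tree $S$ of not-yet-decided blocks assembles, via completeness and amalgamation, into a single direct extension $q\leq^* p^*$ no extension of which lies in $D$, contradicting density. This does not follow: a direct extension cannot change the stem, so along the branch all you can do is shrink the top large set to some $A_\omega$, and all you then know is that extending $q=\tup{\lambda,A_\omega}$ by the \emph{specific} initial segments $t_0,\dots,t_{n-1}$ keeps you out of $D$. Other stem extensions of $q$ remain plentiful, since $A_\omega$ is still large, so density of $D$ is not violated. To trap the generic club inside $S$ you would need $S$ to be fat for \emph{every} measure on $\bar\kappa$ at once, whereas the trees here are only fat for one measure $U(\bar\kappa,\gamma_{\vec t})$ per node. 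The follow-up claims that $\bar\kappa$-completeness pins the rank of $S$ below $\bar\kappa$, and that a final application of the Prikry property collapses that ordinal rank to a natural number, are also unsupported: the rank of $S$ is a fixed ground-model ordinal, not a forcing name, and the Prikry property decides forcing statements, not external ordinals. What you actually want to decide is the name for the number of new stem points of the minimal member of $D\cap\dot G$, which is precisely the paper's starting point. Finally, the reduction of the general case to the base case ``against the dense open set obtained by freezing the remaining coordinates'' is not routine: once a new point is fixed in the $j$th gap, the large sets attached to lower coordinates $x_k$ ($k<j$) belong to measures on $x_k\cap\kappa$ that are not $(x_j\cap\kappa)$-complete, so they cannot be intersected across the measure-one-many choices of the $j$th new point. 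The paper handles exactly this by applying completeness only above coordinate $k$ and then fixing the lower coordinates at a single value $s$ with $B_s$ large.
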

\begin{proof}
Let $p = \tup{d_0,\dots, d_n}$ be a condition in $\RR$ and let $x_0, \dots, x_{n-1}$ be the stem of $p$. Let $x_n = \lambda$. Let us consider the name for the condition $\dot{r} \in D \cap \dot{G}$. Let $y_0, \dots, y_m$ be names for the elements in the Radin club that $r$ introduces. Let $\{i_0, \dots, i_{n}\}$ be the names for the indices, such that $y_{i_j} = x_j$. Let us assume that $r$ has the minimal length in the sense that $\dot{m}$ is minimal and $\tup{i_{j + 1} - i_j \mid j < n-1}$ is minimal in the lexicographic order.

Using the Prikry property of $\RR$ and the closure of $\leq^*$, we can find a direct extension $p^*\leq^*p$ such that for each $j\leq n$, $p^*$ decides the values $i_j$.

Note that if $d_{j+1} = x_{j+1}$ then $\forces i_{j} = i_{j+1} - 1$. 

Let $j_0$ be minimal such that $d_{j_0} = \tup{x_{j_0}, A_{j_0}}$. For each $y \in A_{j_0}$, let us pick a direct extension of the extension of $p^*$ by adding $y$ to the stem, which again decides $\dot{m}$ and $\dot{i_j}$ for all $j \leq n + 1$. Let $p^*_y$ be this direct extension. Since $p^*_y$ is compatible with $p^*$, the value of $\dot{m}$ does not change and similarly, the values of $\dot{i_j}$ only change to reflect the addition of $y$ to the stem. The only thing which is not automatically determined by $p^*$ is the membership of $y$ to the stem of $\dot{r}$ - the minimal condition in the intersection of $D$ and $\dot{G}$. The truth value of this statement is decided by $p^*_y$.

Using the closure of the measures on $x_j$ for $j > j_0$ in the stem of $p$ and the normality of the measure on $x_{j_0}$, we can construct a single condition $p^{**} \leq^* p^*$, such that for all $y \in A_{j_0}^{p^{**}}$, the extension of $p^{**}$ by appending $y$ to the stem is stronger than $p^*_y$ and in particular, it decides the membership of $y$ to $\stem \dot{r}$. 

Let $B^0_{j_0}$ be the collection of all $y\in A_{j_0}^{p^{**}}$ such that $y$ is decided to be the minimal element of $\stem \dot{r}$ which is not in $\stem p$, by the extension of the condition $p^{**}$ with $y$. Clearly, $B^0_{j_0}$ is of measure one for  $U(x_{j_0}\cap \kappa, \beta)$ for some $\beta < o^{\mathcal{U}}(x_{j_0}\cap \kappa)$ if and only if $p^{*} \forces i_{j_0} > j_0$. If $B^0_{j_0}$ is of measure zero with respect to all measures on $x_{j_0} \cap \kappa$ in the coherent sequence, then by taking an additional direct extension of $p^{**}$ we may assume that it is empty. 

Repeat the process and construct a tree of height $i_{j_0} - j_0$, such that above each element in the tree the set of successors is large with respect to one of the measures of $x_{j_0} \cap \kappa$. This tree is going to be $T_0$.

For indices $j$ above $j_0$, we essentially repeat the same argument but with a minor difference: the diagonal intersection is defined differently. Note that when considering the collection of conditions $p^*_y$ as before, we have no control over the different large sets which are attached to $x_k$ for $k < j$. Those large sets belong to measures which are not sufficiently closed, so we cannot simply intersect them. Instead, we take diagonal intersection only above $x_k$ and consider the sets 
\[B_s = \{y \in A^{p^*}_k \mid p^*_y \restriction k = s,\, p^*_y \forces y\text{ is the next element in }\stem \dot{r}\}\]
By the completeness of the measures, there is $s$ such that $B_s$ is large. Let direct extend $p^*$ by taking the diagonal intersection above coordinate $k$ and replace the lower $k$ coordinates with $s$. From this point the rest of the argument is the same.
\end{proof}
We are now ready to prove \autoref{thm:cummings-woodin}.
\begin{proof}
Let us start by arguing that it is sufficient to prove the Theorem for the case when $p$ has an empty stem. Since the forcing $\RR\restriction p$, when $p$ has a nontrivial stem, splits into a product of $\len(p)$ components, each of them being a Radin forcing using some coherent measure sequence below some condition with empty stem,\footnote{We only defined splitting for the case of $d_i=\tup{x_i,A_i}$ and not when $d_i=x_i$. To overcome this we can either treat this case as a Radin forcing with an empty sequence, which would be a trivial forcing.} by applying the theorem finitely many times over each of those components we obtain a generic filter for each component. Using Lemma \ref{lemma:strong-prikry-property}, we conclude that those filters are mutually generic and therefore their product is a generic filter for the image of $\RR$ that contains the image of $p$. So we can safely assume without loss of generality that $p$ has an empty stem.

We define by recursion a sequence of models $M_\alpha$ and elementary embeddings $j_{\alpha,\beta}\colon M_\alpha\to M_\beta$ ($M_0=V$ and $j_{\alpha,\alpha}=\id$), as well as a sequence $\vec s$ of seeds $s_\alpha$. To improve readability, we denote by $j_\alpha$ the embedding $j_{0,\alpha}$, as well as $\cU_\alpha=j_\alpha(\cU)$, $\kappa_\alpha=j_\alpha(\kappa)$ and $\lambda_\alpha=j_\alpha(\lambda)$.

Suppose that $M_\alpha$, all the embeddings up to $\alpha$, and the sequence $\vec s\restriction\alpha$ was defined. Let $\gamma<j_\alpha(o^\cU(\kappa))$ be the least ordinal such that there is no cofinal sequence $\tup{\alpha_i\mid i<\cf(\gamma)}$ such that $\cU_\alpha(\kappa_\alpha,\gamma)$ is the set 
\[\Bigl\{X\subseteq\power_{\kappa_\alpha}(\lambda_\alpha)^{M_\alpha}\Bigm|\{i<\cf(\gamma)\mid s_{\alpha_i}\in X\}\text{ contains a tail of }\cf(\gamma)\Bigr\}.\]
If no such $\gamma$ exists, then we halt. 

Let $M_{\alpha+1}$ be the ultrapower $\Ult(M_\alpha,\cU_\alpha(\kappa_\alpha,\gamma))$, and define $j_{\gamma,\alpha+1}$ as the composition of $j_{\gamma,\alpha}$ with the ultrapower embedding. In particular, $j_{\alpha,\alpha+1}$ is itself the ultrapower embedding. Finally, let $s_\alpha$ be $j_{\alpha,\alpha+1}``\lambda_\alpha$. Note that if $\alpha$ was a successor ordinal, then $\gamma=0$.

If $\alpha$ is a limit ordinal, and $M_\beta$ were defined for all $\beta<\alpha$, then $M_\alpha$ is the direct limit of these models, combined with the embeddings, and $j_{\beta,\alpha}$ is the canonical embedding from $M_\beta$ to $M_\alpha$. By Gaifman's theorem, $M_\alpha$ is well-founded.

We claim that the process halts. Otherwise, let $\mu$ be a regular cardinal larger than $2^{o^\cU(\kappa)}+2^{2^{\lambda^{<\kappa}}}$. By the definition of the direct limit of ultrapowers, each element in $M_\mu$ is of the form $j_\mu(f)(j_{\alpha_0+1,\mu}(s_{\alpha_0}),\ldots,j_{\alpha_{n-1}+1,\mu(s_{\alpha_{n-1}}}))$, where $f\colon\power_\kappa(\lambda)^n\to V$. Let $\gamma_\alpha$ denote the $\gamma$ used in the $\alpha$th ultrapower, and let $g_\alpha\colon\power_\kappa(\lambda)\to o^\cU(\kappa)$ be a function such that \[\gamma_\alpha=j_\alpha(g_\alpha)(j_{\alpha_0+1,\alpha}(s_{\alpha_0}),\ldots,j_{\alpha_{n-1}+1,\alpha}(s_{\alpha_{n-1}})).\]
Since $\alpha_0,\ldots,\alpha_{n-1}<\alpha$, there is a stationary $S\subseteq\mu$ such that all of these are the same. Similarly, since there are not many possible $g_\alpha$'s either, we may assume that for all $\alpha\in S$, $g$ and $\alpha_0,\ldots,\alpha_{n-1}$ are the same.

Let $\tup{\delta_n\mid n<\omega}\subseteq S$ such that $\delta=\sup\{\delta_n\mid n<\omega\}\in S$ as well. We claim that for all $X\in\cU_\delta(\kappa_\delta,\gamma_\delta)$ there is a natural number $N$, such that for all $n>N$, $j_{\delta_n+1,\delta}(s_{\delta_n})\in X$. Let $N$ be the least such that $X=j_{\delta_N,\delta}(Y)$ for some set $Y$. Since $\gamma_\delta=j_{\delta_N+1,\delta}(s_{\delta_N})$, it follows that $Y\in\cU_{\delta_N}(\kappa_{\delta_N},\gamma_{\delta_N})$. In particular, $s_{\delta_N}\in j_{\delta_N,\delta_N+1}(Y)$. Therefore $j_{\delta_N+1,\delta}(s_{\delta_N})\in X$, and the argument is similar for any $n>N$ as wanted. But this is a contradiction to the choice of $\gamma_\delta$, so the process had to halt at some limit ordinal.

Let $\delta$ denote the length of the recursion. We claim that $\tup{j_{\alpha+1,\delta}(s_\alpha)\mid\alpha<\delta}$ is an $M_{\delta}$-generic Radin club for $j_\delta(\RR)$. By the definition of $\delta$, for all $\gamma<j_\delta(o^\cU(\kappa))$, there are cofinally many $\alpha<\delta$ for which $j_{\alpha,\alpha+1}$ is an ultrapower embedding using a measure $U_*$ such that $j_{\alpha,\delta}(U_*)=U(\kappa_\delta,\gamma)$, thus the sequence enters any finite fat tree, and is therefore $M_\delta$-generic. Therefore taking $M=M_\delta$ and $k=j_\delta$ completes the proof.
\end{proof}
\section{Critical cardinals}\label{section:critical}
\subsection{Weakly critical cardinals}
\begin{definition}
We say that $\kappa$ is a \textit{weakly critical cardinal} if for every $A\subseteq V_\kappa$ there exists an elementary embedding $j\colon X\to M$ with critical point $\kappa$, where $X$ and $M$ are transitive and $\kappa,V_\kappa,A\in X\cap M$.
\end{definition}
Assuming Choice, this definition is equivalent to the statement that $\kappa$ is weakly compact, as a consequence of the following proposition and \cite[Theorem~4.5]{Kanamori:2003}. However, without Choice the term ``weakly compact'' is ambiguous in the sense that the many definitions need not be equivalent anymore. 

Note that by a simple coding argument we can extend the requirement from just one $A$ to $|V_\kappa|$ subsets at the same time.

\begin{proposition}
$\kappa$ is weakly critical if and only if for every $A\subseteq V_\kappa$ there is a transitive, elementary end-extension of $\tup{V_\kappa,\in,A}$.
\end{proposition}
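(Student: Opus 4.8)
The plan is to prove the two implications separately: the forward direction is a direct computation with the given embedding, while the converse requires synthesizing an embedding out of an end-extension.

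For the forward implication, given $A\subseteq V_\kappa$ I would fix an embedding $j\colon X\to M$ with critical point $\kappa$ supplied by weak criticality and set $N:=j(V_\kappa)$ and $B:=j(A)$. The first point to secure is the standard fact that $\crit(j)=\kappa$ already forces $j\restriction V_\kappa=\id$: since $V_\kappa\in X$ and $X$ is transitive, every $V_\beta$ for $\beta<\kappa$ and every member of $V_\kappa$ lies in $X$, rank is absolute for transitive sets, and the usual induction on rank then goes through using only atomic $\in$-facts and the elementarity of $j$ — in particular no fragment of $\ZF$ is needed in $X$. Granting this, $N$ is transitive (an element of $M$), $V_\kappa\subseteq N$, and $B\cap V_\kappa=A$. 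Elementarity of $j$ then yields $\langle V_\kappa,\in,A\rangle\prec\langle N,\in,B\rangle$: for each fixed $\in$-formula $\varphi$ the statement ``$\langle V_\kappa,\in,A\rangle\models\varphi(\bar a)$'' is literally the $\in$-formula obtained by relativizing $\varphi$ to the parameter $V_\kappa$ and reading off $A$ as a parameter, so $j$ carries it verbatim to $\langle N,\in,B\rangle$. Finally $\Ord\cap N=j(\kappa)>\kappa$, since $\langle X,\in\rangle$ sees the true $\in$-statement ``$\kappa=\{x\in V_\kappa\mid x\text{ is an ordinal}\}$''; hence $N\neq V_\kappa$ and we have a proper transitive elementary end-extension.

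For the converse, given $A\subseteq V_\kappa$ I would first dispose of the case $A\in V_\kappa$ (there the construction below applies with one fewer distinguished point and trivially has $A\in V_\kappa\subseteq X\cap M$), so assume $\rank(A)=\kappa$. The idea is to apply the extension property not to $A$ itself but to a single predicate $\tilde A\subseteq V_\kappa$ that codes both $A$ — say via $\{(\varnothing,a)\mid a\in A\}$ — and the cumulative hierarchy $\langle V_\beta\mid\beta<\kappa\rangle$; one checks $\rank(\tilde A)=\kappa$, so $\tilde A\notin V_\kappa$. This produces a proper transitive elementary end-extension $\langle N,\in,\tilde B\rangle$ of $\langle V_\kappa,\in,\tilde A\rangle$, with $\tilde B\cap V_\kappa=\tilde A$ and $V_\kappa\subseteq N$ automatically. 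I then set $\theta:=\Ord\cap N$, $B:=\{b\in N\mid(\varnothing,b)\in\tilde B\}$, $X:=V_\kappa\cup\{V_\kappa,\kappa,\tilde A,A\}$, $M:=N\cup\{N,\theta,\tilde B,B\}$, and define $j\colon X\to M$ to be the identity on $V_\kappa$ with $j(V_\kappa)=N$, $j(\kappa)=\theta$, $j(\tilde A)=\tilde B$, $j(A)=B$. Now a short list of transfers of true first-order statements about $\langle V_\kappa,\in,\tilde A\rangle$ does the bookkeeping: $V_\kappa^N=V_\kappa$ because $N$ is transitive and $V_\kappa\subseteq N$; the coded hierarchy of $N$ computes $V_\beta$ correctly for $\beta\le\kappa$ since $\power(V_\beta)=V_{\beta+1}\subseteq V_\kappa\subseteq N$ for $\beta<\kappa$, so in particular $V_\kappa\in N$; as $N$ properly extends $V_\kappa$ and ``every set lies in some $V_\beta$'' transfers, $\theta>\kappa$; ``$\Ord$ is a proper class'' transfers, so $\theta\notin N$; the facts that the predicate $\tilde A$ and its $(\varnothing,-)$-section have no sets of realizers transfer, so $B,\tilde B\notin N$; and a transferred instance of separation at coded level $\kappa$ gives $A=B\cap V_\kappa\in N$. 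Thus $\kappa,V_\kappa,A\in N\subseteq M$, $M$ is transitive, and $j$ has critical point $\kappa$ provided it is elementary.

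The hard part — essentially the only non-routine step — is the elementarity of $j$. Here the plan is to observe that $\langle X,\in\rangle$ is nothing but the \emph{canonical four-point end-expansion} of $\langle V_\kappa,\in,\tilde A\rangle$: one adjoins four points whose sets of $\in$-predecessors are, respectively, all of $V_\kappa$, the ordinals of $V_\kappa$, $\{z\mid(\varnothing,z)\in\tilde A\}$, and the elements of $\tilde A$, with no further $\in$-relations among the new points nor from them into $V_\kappa$. Such an expansion is effected by a fixed syntactic translation $\varphi\mapsto\varphi^F$ of $\in$-formulas in four constants into $\in$-formulas in the language of $\langle V_\kappa,\in,\tilde A\rangle$. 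All of the atomic bookkeeping of the previous paragraph is exactly what is needed to see that $\langle M,\in\rangle$ is, via $N\mapsto\ast_0$, $\theta\mapsto\ast_1$, $B\mapsto\ast_2$, $\tilde B\mapsto\ast_3$, the same canonical four-point end-expansion of $\langle N,\in,\tilde B\rangle$ — this is precisely where $\theta,B,\tilde B\notin N$ and the pairwise $\in$-unrelatedness of the four points get used. Since $\langle V_\kappa,\in,\tilde A\rangle\prec\langle N,\in,\tilde B\rangle$, the translation $\varphi\mapsto\varphi^F$ makes the inclusion of the two four-point expansions elementary, which is exactly the elementarity of $j$. In any degenerate case — for instance $A=V_\kappa$, or $A$ a set of ordinals forcing $B=\theta$ — one merely drops the coinciding point, which only shortens the argument.
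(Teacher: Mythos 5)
Your proof is correct, and the core construction in both directions is the same one the paper uses. The forward direction matches exactly. In the converse, the paper writes down the same sets $X=V_\kappa\cup\{V_\kappa,\kappa,A\}$ and $M=W\cup\{W,\kappa',B\}$ and simply asserts that $j$ is elementary; you supply a careful verification via the ``four-point end-expansion'' argument, which is a reasonable way to make the elementarity rigorous. That said, two of your detours are unnecessary. First, coding the cumulative hierarchy $\langle V_\beta\mid\beta<\kappa\rangle$ into the predicate $\tilde A$ buys you nothing: the sentence ``for every ordinal $\alpha$ there is a function on $\alpha+1$ witnessing the cumulative hierarchy up to $\alpha$'' is a pure $\{\in\}$-sentence true in $V_\kappa$ (the witnessing function sits well below rank $\kappa$), so it transfers to $W$; taking $\alpha=\kappa$ (which lies in $W$ once the extension is proper) and noting $(V_\alpha)^W=V_\alpha$ for $\alpha\le\kappa$ already gives $V_\kappa\in W$. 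Second, arranging $\rank(\tilde A)=\kappa$ does not actually secure properness of the end-extension --- the trivial end-extension $\langle V_\kappa,\in,\tilde A\rangle$ of itself exists regardless of the predicate --- so properness has to be read into the hypothesis of the extension property, exactly as the paper implicitly does; the coding is a red herring for this purpose. With those simplifications your argument collapses to the paper's proof, with the elementarity check made explicit where the paper leaves it to the reader.
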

In the standard context of weak compactness this is known as the \textit{Extension Property}, and it is due to Keisler (see also \cite[Theorem~4.5]{Kanamori:2003}).
\begin{proof}
Suppose that $\kappa$ is weakly critical, $A\subseteq V_\kappa$, and $j\colon X\to M$ is an elementary embedding witnessing the fact $\kappa$ is weakly critical. Let $W=j(V_\kappa)$ and let $B=j(A)$, then $W$ is transitive and $B\cap V_\kappa=A$. Easily, $\tup{W,\in,B}$ is an elementary end-extension of $\tup{V_\kappa,\in,A}$ as wanted.

In the other direction, suppose that $\tup{W,\in,B}$ is a transitive, elementary end-extension of $\tup{V_\kappa,\in,A}$, denote by $\kappa'=W\cap\Ord$. Let $M=W\cup\{W,\kappa',B\}$, and let $X=V_\kappa\cup\{V_\kappa,\kappa,A\}$. Then both $M$ and $X$ are transitive sets, and defining $j\colon X\to M$ by $j\restriction V_\kappa=\id$ and $j(V_\kappa)=W$, $j(\kappa)=\kappa'$ and $j(A)=B$ is an elementary embedding between transitive sets with critical point $\kappa$.
\end{proof}
\begin{proposition}\label{prop:wc-is-mahlo}
If $\kappa$ is weakly critical cardinal, then it is Mahlo, it is in particular strongly inaccessible, and in particular regular.
\end{proposition}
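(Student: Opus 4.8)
The plan is to establish the three ingredients of Mahloness --- regularity, strong limitness, and stationarity of the inaccessibles below $\kappa$ --- by a single recipe. In each case I assume the property fails, I encode a putative counterexample as one set $A\subseteq V_\kappa$, I apply weak criticality to obtain an elementary embedding $j\colon X\to M$ of transitive sets with $\crit(j)=\kappa$ and $\kappa,V_\kappa,A\in X\cap M$, and I derive a contradiction from elementarity together with the standard facts that $j\restriction V_\kappa=\id$, that $j(\kappa)>\kappa$, and that $j(A)\cap V_\kappa=A$. The one point that must be handled with care throughout --- and which I regard as the real content of the argument --- is an absoluteness observation: since $V_\kappa\in X$ and $X$ is transitive we have $V_\kappa\subseteq X$ (and likewise $V_\kappa\subseteq M$), so every ``small'' object lives in $X$ and $M$: every function $\gamma\to\alpha$ with $\alpha<\kappa$, every bijection between two ordinals below $\kappa$, every surjection of $\power(\delta)$ onto an ordinal $\le\kappa$ for $\delta<\kappa$, and so on. Consequently $X$ and $M$ compute ranks and power sets of sets below $\kappa$ correctly, and if $X$ (or $M$) believes an ordinal $\alpha\le\kappa$ is a regular cardinal, or a strong limit, or inaccessible, then the same holds in $V$, because any witness to the contrary in $V$ would be such a small object and would therefore already belong to $X$ (or $M$).

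For regularity, suppose $\gamma<\kappa$ and $g\colon\gamma\to\kappa$ is cofinal, and take $A=g\subseteq V_\kappa$. By elementarity $M$ believes $j(A)$ is cofinal in $j(\kappa)$ with domain $j(\gamma)=\gamma$. But $j$ fixes every pair occurring in $g$, so $A\subseteq j(A)$, and since both are functions with domain $\gamma$ we get $j(A)=g$; its range is bounded by $\kappa<j(\kappa)$, contradicting cofinality in $j(\kappa)$. Hence $\kappa$ is a regular ordinal, and a regular ordinal is a cardinal, so $\kappa$ is regular.

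For strong limitness, suppose some $\delta<\kappa$ admits a surjection $s\colon\power(\delta)\to\kappa$, and take $A=s$. Since $\delta+1<\kappa$ we have $\power(\delta)\subseteq V_\kappa$, hence $A\subseteq V_\kappa$, and $\power(\delta)$ is computed correctly in $X$ and $M$. By elementarity $M$ believes $j(A)$ surjects $\power(\delta)$ onto $j(\kappa)$, so some $x\subseteq\delta$ in $M$ has $j(A)(x)=\kappa$. But $x\in V_\kappa$ is fixed by $j$, the pair $\langle x,s(x)\rangle$ lies in $A\subseteq j(A)$, and $j(A)$ is a function, so $j(A)(x)=s(x)<\kappa$ --- a contradiction. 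Thus $\kappa$ is a strong limit; since additionally $\kappa\ne\omega$ (no elementary embedding can move $\omega$), $\kappa$ is strongly inaccessible.

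Finally, for Mahloness I show that an arbitrary club $C\subseteq\kappa$ contains an inaccessible cardinal. Take $A=C$ and $j\colon X\to M$ as above. Then $M$ believes $j(C)$ is a club in $j(\kappa)$; since $j$ fixes $V_\kappa$ we have $j(C)\cap\kappa=C$, which is unbounded in $\kappa$, and $j(C)$ is closed, so $\kappa\in j(C)$. Moreover $\kappa$ is inaccessible in $M$: a witness in $M$ to singularity or to non-strong-limitness of $\kappa$ would, as $M\subseteq V$, be a witness in $V$, contradicting the previous two paragraphs. Hence $M\models{}$``$j(C)$ contains an inaccessible cardinal'', so by elementarity $X\models{}$``$C$ contains an inaccessible cardinal'', and by the absoluteness observation the witnessing $\alpha<\kappa$ is genuinely inaccessible in $V$. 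As $C$ was arbitrary, the inaccessibles are stationary in $\kappa$, so $\kappa$ is Mahlo. The main obstacle, as flagged above, is not any individual contradiction but keeping honest track of which of the structures $X$, $M$, $V$ correctly computes each notion used and ensuring that the transfers of truth between them are legitimate; once that bookkeeping is pinned down, every step is a routine application of elementarity and the three basic facts about $j$.
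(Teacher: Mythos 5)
Your argument follows the same strategy as the paper's: encode the putative counterexample as a subset $A$ of $V_\kappa$, apply weak criticality, and exploit the fact that $j$ fixes $V_\kappa$ pointwise together with the absoluteness of the relevant notions between $X$, $M$, and $V$. The one thing to watch is the choiceless reading of ``strongly inaccessible.'' Immediately after the statement, the paper (following Blass--Dimitriou--L\"owe) fixes the meaning to be: $\kappa$ is a regular limit cardinal and for all $\alpha<\kappa$, \emph{$V_\alpha$} does not map onto $\kappa$. Your strong-limit step only rules out surjections $\power(\delta)\to\kappa$, which in $\ZF$ is strictly weaker, since $V_\alpha$ is much larger than any $\power(\delta)$ and need not inject into one. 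The fix is to run exactly the same computation with $A=f$ for an arbitrary cofinal $f\colon V_\alpha\to\kappa$: since $V_\alpha\subseteq V_\kappa\subseteq X\cap M$ is fixed pointwise by $j$, you get $j(f)=f$ with range bounded by $\kappa<j(\kappa)$, a contradiction; and because $\gamma\subseteq V_\gamma$, this single step also subsumes your separate regularity argument. This is in fact how the paper proceeds, establishing strong inaccessibility in one shot. Your Mahloness paragraph matches the paper's (the paper packages it slightly differently by also putting the set of inaccessibles below $\kappa$ into $X$), including the care taken over which of $X$, $M$, $V$ correctly computes inaccessibility below $\kappa$.
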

This is a very similar proof to the proof in $\ZFC$. It should be noted that without Choice ``strongly inaccessible'' could have different meanings (see \cite{BlassEtal:2007}), and here we mean that $V_\kappa$ satisfies second-order $\ZF$, or that $\kappa$ is a regular limit cardinal and for all $\alpha<\kappa$, $V_\alpha$ does not map onto $\kappa$.
\begin{proof}
If $\alpha<\kappa$ and there is a function $f\colon V_\alpha\to\kappa$ which is cofinal, let $j\colon X\to M$ witness that $\kappa$ is weakly critical with $f\in X$. Then $j(f)=f$, as its domain is fixed by $j$, which would have range cofinal in $j(\kappa)>\kappa$, which is a contradiction. Therefore $\kappa$ is strongly inaccessible.

Now let $A$ be the set of strongly inaccessible cardinals below $\kappa$, and let $C\subseteq\kappa$ be a club. Let $j\colon X\to M$ witness that $\kappa$ is weakly critical with $A,C\in X$. Then $M\models\kappa\in j(A)\cap j(C)$, therefore $X\models A\cap C\neq\varnothing$. Therefore $A$ is indeed stationary.
\end{proof}
The proof above generalizes to many other properties we have grown to expect from weakly compact cardinals in $\ZFC$. For example a weakly critical cardinal is Mahlo to any degree up to $\kappa^+$. Similarly, a weakly critical cardinal has the tree property.

The following question has been raised by Itay Kaplan.
\begin{question}\label{q:least-wc}
How bad can a weakly critical cardinal's identity crisis be without Choice? Can the least weakly critical cardinal be the least measurable?
\end{question}
\subsection{Critical cardinals}
\begin{definition}
We say that a cardinal $\kappa$ is a \textit{critical cardinal} if it is the critical point of an elementary embedding $j\colon V_{\kappa+1}\to M$, where $M$ is a transitive set. \end{definition}
Easily, assuming Choice a cardinal is critical if and only if it is a measurable cardinal, and we can then assume that $j$ is defined on $V$ itself by taking an ultrapower of $V$ by a normal measure. This argument uses {\L}o\'s' theorem which relies on the Axiom of Choice, so there is no reason to expect that in $\ZF$ we can replace $j$ by an embedding defined on the whole universe, $V$. Therefore, we require only an initial segment to be the domain of $j$.

Clearly, every critical cardinal is weakly critical. But even more is true.
\begin{proposition}
If $\kappa$ is critical, then $\kappa$ is carries a normal measure on $\kappa$ which concentrates on the set of weakly critical cardinals.
\end{proposition}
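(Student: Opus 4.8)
The plan is to fix an elementary embedding $j\colon V_{\kappa+1}\to M$ with $\crit(j)=\kappa$ and $M$ transitive, to form the derived ultrafilter $U=\{A\subseteq\kappa\mid\kappa\in j(A)\}$ (this is legitimate, since every subset of $\kappa$ lies in $V_{\kappa+1}$), and to show that $U$ is a normal measure which contains the set of weakly critical cardinals below $\kappa$. That $U$ is a nonprincipal ultrafilter is immediate from elementarity ($\kappa\notin j(\{\alpha\})=\{\alpha\}$ for $\alpha<\kappa$, and $j(\kappa\setminus A)=j(\kappa)\setminus j(A)$), and it is normal in the sense that if $f\colon\kappa\to\kappa$ is an element of $V_{\kappa+1}$ with $\{\xi\mid f(\xi)<\xi\}\in U$, then $f$ is constant on a set in $U$: indeed $j(f)(\kappa)<\kappa$, so $j(f)(\kappa)=j(\eta)$ for $\eta:=j(f)(\kappa)$, whence $\{\xi\mid f(\xi)=\eta\}\in U$. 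From this, $\kappa$-completeness and closure under diagonal intersections follow by the usual arguments, which only involve functions $\kappa\to\kappa$ and the fact that every final segment of $\kappa$ belongs to $U$. The substantive point is the concentration, and, as we will see, it reduces to proving that $M\models$``$\kappa$ is weakly critical''.

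Before that I would record two standard facts about $j$. First, $j\restriction V_\kappa=\id$ and $V_\beta^M=V_\beta$ for every $\beta\le\kappa$: this is the familiar simultaneous induction on rank below $\kappa$, using that $j$ fixes the ordinals below $\kappa$ and that $V_{\kappa+1}$, hence also $M$, correctly computes the cumulative hierarchy at levels $\le\kappa$. In particular $V_\kappa\subseteq M$ and $j(V_\kappa)=V_{j(\kappa)}^M$ is transitive with $V_\kappa\in j(V_\kappa)$. Second, $V_{\kappa+1}\subseteq M$: for $A\subseteq V_\kappa$, the identity $j\restriction V_\kappa=\id$ together with elementarity gives $j(A)\cap V_\kappa=A$, and since $j(A),V_\kappa\in M$ this exhibits $A=j(A)\cap V_\kappa$ as an element of $M$; so the sets $A\in M$ with $M\models A\subseteq V_\kappa$ are precisely the subsets of $V_\kappa$.

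The heart of the argument is to show $M\models$``$\kappa$ is weakly critical''. By the previous Proposition, $V_{\kappa+1}\models$``$\beta$ is weakly critical if and only if the extension property holds at $\beta$'' for every ordinal $\beta\le\kappa$ (the extension property at $\beta$ being: every $\tup{V_\beta,\in,A}$ with $A\subseteq V_\beta$ has a transitive elementary end-extension), since the proof of that Proposition uses only operations available in $V_{\kappa+1}$; by elementarity of $j$ the same holds in $M$ at $\kappa$. It therefore suffices to show that $M$ believes that each $\tup{V_\kappa,\in,A}$ with $A\subseteq V_\kappa$ (equivalently, $A\in M$ with $M\models A\subseteq V_\kappa$) has a transitive elementary end-extension, and I claim $\tup{j(V_\kappa),\in,j(A)}$ is one: it is transitive, $j(A)\cap V_\kappa=A$, and the extension is an end-extension by transitivity. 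Elementarity $\tup{V_\kappa,\in,A}\prec\tup{j(V_\kappa),\in,j(A)}$ is obtained by pushing the (absolute, $\Delta_1$-definable) satisfaction relation for set models through $j$: for each formula $\varphi$ and tuple $\bar a\in V_\kappa$, the assertion ``$\tup{V_\kappa,\in,A}\models\varphi[\bar a]$'' concerns elements of $V_{\kappa+1}$, its $j$-image is ``$\tup{j(V_\kappa),\in,j(A)}\models\varphi[\bar a]$'' because $j$ fixes $\varphi$ and $\bar a$, and absoluteness of satisfaction between $V$, $V_{\kappa+1}$ and $M$ identifies both sides with their true values. As $V_\kappa$, $A$, $j(V_\kappa)$, $j(A)$ all lie in $M$ and every ingredient is absolute, $M$ itself certifies that $\tup{j(V_\kappa),\in,j(A)}$ is the desired end-extension. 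I expect this to be the main difficulty: the issue is not that $V$ sees enough, but that $M$ must \emph{recognise} $\kappa$ as weakly critical, which forces one to keep careful track of which set models actually lie in $M$ and of the absoluteness of satisfaction for the weak theories of $V_{\kappa+1}$ and $M$.

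Finally I would conclude by reflection. Let $W_0=\{\alpha<\kappa\mid V_{\kappa+1}\models\text{``}\alpha\text{ is weakly critical''}\}$; since $W_0$ is definable over $V_{\kappa+1}$, elementarity gives $j(W_0)=\{\alpha\le j(\kappa)\mid M\models\text{``}\alpha\text{ is weakly critical''}\}$, so the previous step yields $\kappa\in j(W_0)$, that is, $W_0\in U$. Moreover every $\alpha\in W_0$ is genuinely weakly critical in $V$: for each real $A\subseteq V_\alpha$ (automatically an element of $V_{\kappa+1}$), the end-extension provided by $V_{\kappa+1}$ is transitive, an end-extension, and elementary — all absolute properties, with the satisfaction relation of $V_{\kappa+1}$ being the true one — hence it is a genuine witness, so $W_0\subseteq\{\alpha<\kappa\mid\alpha\text{ is weakly critical}\}$. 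Therefore the set of weakly critical cardinals below $\kappa$ contains $W_0\in U$, and since $U$ is upward closed this set belongs to $U$, completing the proof.
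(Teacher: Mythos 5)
Your proof is correct and follows essentially the same route as the paper: derive the measure $U=\{A\subseteq\kappa\mid\kappa\in j(A)\}$, observe that $\tup{j(V_\kappa),\in,j(A)}$ is a transitive elementary end-extension of $\tup{V_\kappa,\in,A}$ inside $M$ (so $M\models$ ``$\kappa$ is weakly critical''), and reflect. Your write-up is more meticulous than the paper's --- in particular you carefully establish $V_{\kappa+1}\subseteq M$, spell out the absoluteness of satisfaction used for the elementarity of the end-extension, and verify that weak criticality of $\alpha<\kappa$ is absolute between $V$, $V_{\kappa+1}$, and $M$ --- but these are details the paper elides rather than a different argument.
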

Again the proof is quite similar to the proof in $\ZFC$.
\begin{proof}
We define $\cU$ to be $\{A\subseteq\kappa\mid\kappa\in j(A)\}$. It is easy to check that $\cU$ is a normal measure on $\kappa$. Observe now that if $A\subseteq V_\kappa$, then $\tup{j(V_\kappa),\in,j(A)}$ is a transitive elementary end-extension of $\tup{V_\kappa,\in,A}$ inside $M$.

Therefore $M\models\kappa$ is weakly critical. Let $A$ be $\{\lambda<\kappa\mid\lambda\text{ is weakly critical}\}$, then $\kappa\in j(A)$ and therefore $A\in\cU$.
\end{proof}
It is known that $\omega_1$ can be measurable (\cite[Theorem~21.16]{Jech2003}), but it is not even a weakly critical cardinal, as follows from \autoref{prop:wc-is-mahlo}. And Eilon Bilinsky and Moti Gitik proved in \cite{BilinskyGitik2012} that it is consistent that there is a measurable cardinal with no normal measures. This shows that from a combinatorial point of view, being a critical cardinal is much stronger than being measurable.
\begin{proposition}\label{prop:ZFC-below-critical}
Let $\kappa$ be a critical cardinal. If $V_\kappa\models\ZFC$, then $V_{\kappa+1}$ can be well-ordered and $\kappa^+$ is regular and not measurable.
\end{proposition}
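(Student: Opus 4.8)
The plan is to exploit the critical embedding $j\colon V_{\kappa+1}\to M$ together with the hypothesis $V_\kappa\models\ZFC$ to transfer a global well-ordering below $\kappa$ up to $V_{\kappa+1}$, and then run the usual ultrapower argument to control $\kappa^+$. First I would observe that since $\kappa$ is critical it is in particular weakly critical, hence strongly inaccessible by \autoref{prop:wc-is-mahlo}, so $V_\kappa$ is closed under the operations needed to even make sense of ``$V_\kappa\models\ZFC$'' (in particular $|V_\alpha|<\kappa$ for all $\alpha<\kappa$). From $V_\kappa\models\ZFC$ we get a well-ordering $<^*$ of $V_\kappa$ in $V_\kappa$ — more usefully, $V_\kappa\models$ ``$V=\HOD$'' may fail, so instead I would take $<^*$ to be \emph{any} well-ordering of $V_\kappa$ that lies in $V_{\kappa+1}$ (it exists because $V_\kappa\models\AC$ gives a well-ordering of $V_\kappa$ as a \emph{set}, and that set has rank $\kappa$, so it is an element of $V_{\kappa+1}$).

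Next I would apply $j$ to the structure $\tup{V_\kappa,\in,<^*}$. Since $<^*\in V_{\kappa+1}=\dom j$, the image $j(<^*)$ is a well-ordering of $j(V_\kappa)$ in $M$, and by elementarity its restriction to $V_\kappa$ (which is $<^*$, since $j\restriction V_\kappa=\id$ and $j(<^*)\cap V_\kappa\times V_\kappa=<^*$ by end-extension) agrees with $<^*$. Now the key point: every element of $V_{\kappa+1}$ is a subset of $V_\kappa$, and I would well-order $\power(V_\kappa)$ by using $j(<^*)$ to code subsets. Concretely, for $A\subseteq V_\kappa$ consider $j(A)\cap V_\kappa = A$ — that alone doesn't separate names — so instead, for $A,B\subseteq V_\kappa$ with $A\neq B$, look at $\kappa\in j(A)\sdiff j(B)$ or not; more robustly, map $A\mapsto j(A)\cap \kappa$-th level... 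Let me phrase the clean version: the map $A\mapsto j(A)$ is injective on $\power(V_\kappa)$ (if $j(A)=j(B)$ then $A=j(A)\cap V_\kappa=j(B)\cap V_\kappa=B$), and $j(A)\in j(V_{\kappa+1})=M\cap V_{j(\kappa)+1}$, which carries the well-ordering $j(<^{**})$ where $<^{**}$ is a well-ordering of $V_{\kappa+1}$ — wait, that is circular. The correct route is: $M$ is a transitive \emph{set}, hence $M\cap\Ord$ is an ordinal $<$ some $\eta$, and in $V$ (not assuming global choice of $V$) we cannot necessarily well-order $M$. So instead I would well-order $V_{\kappa+1}$ directly: since $V_\kappa$ is well-ordered by $<^*$ and $|V_\kappa|$ can be taken to be $\kappa$, enumerate $V_\kappa=\{a_\xi\mid\xi<\kappa\}$, then each $A\subseteq V_\kappa$ corresponds to $\{\xi<\kappa\mid a_\xi\in A\}\subseteq\kappa$, so $\power(V_\kappa)$ injects into $\power(\kappa)$, and it suffices to well-order $\power(\kappa)$. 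For that I would use the measure: let $\cU=\{A\subseteq\kappa\mid\kappa\in j(A)\}$, form the ultrapower $\Ult(V_{\kappa+1},\cU)$ — but {\L}o\'s needs choice. The actual argument must be: $j$ itself gives, for each $A\subseteq\kappa$, the ordinal... hmm.

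Let me commit to what I expect is the intended argument. Since $V_\kappa\models\ZFC$, in particular $V_\kappa\models$ ``$2^{<\kappa}=\kappa$-type'' statements relativized below, and crucially we can well-order $V_\kappa$ \emph{in $V_{\kappa+1}$} by a relation of rank $\kappa$. Apply $j$: $j$ restricted to $\power(V_\kappa)^{V}= V_{\kappa+1}$ is an injection of $V_{\kappa+1}$ into $M$, and $M$ thinks $\tup{V_\kappa,\in,<^*} \prec \tup{j(V_\kappa),\in,j(<^*)}$ is an end-extension with $M\models|j(V_\kappa)|=j(\kappa)$, so $M$ has a surjection $g\colon j(\kappa)\to j(V_{\kappa})$ well-ordering it; then for $A\in V_{\kappa+1}$ set $F(A)=$ the $<_{\Ord}$-least $\alpha$ coding $j(A)$ via $g$ and $j(<^*)$-structure — this $F$ is injective from $V_{\kappa+1}$ into $\Ord$, giving a well-ordering of $V_{\kappa+1}$. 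Once $V_{\kappa+1}$ is well-ordered, $\cU$ is a genuine $\kappa$-complete normal \emph{ultrafilter} on which {\L}o\'s applies (only countably-many-at-a-time choices, or rather choices from a well-ordered set, are needed), so $\Ult(V,\cU)$ — or at least $\Ult(V_{\kappa+2},\cU)$ — makes sense, $\crit$ is $\kappa$, and the standard $\ZFC$ argument shows $\kappa^+$ is regular and not measurable: regularity because a cofinal map $f\colon\gamma\to\kappa^+$ with $\gamma\leq\kappa$ would be moved to a cofinal map into $j(\kappa^+)>\kappa^+$ with the same ($\leq\kappa$-sized, hence pointwise-fixed) domain, contradiction; non-measurability because $\kappa^+$ is a successor of a well-orderable-power cardinal, and successor cardinals are not measurable in the presence of enough choice (or: a normal measure on $\kappa^+$ would contradict $\kappa^+$ being a successor via the usual Ulam-style argument, now available since $\power(\kappa^+)$-fragments are well-ordered). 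The main obstacle I foresee is the first part — carefully producing the well-ordering of $V_{\kappa+1}$ from the well-ordering of $V_\kappa$ \emph{and} the embedding, without secretly invoking choice in $V$; the trick is that $j$ is an honest set map and $M$ is an honest set, so the coding happens entirely inside well-ordered structures. Everything after that is the routine $\ZFC$ Levy–Solovay/successor-of-measurable bookkeeping.
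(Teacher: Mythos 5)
Your plan is right in spirit—use $j$ and the hypothesis $V_\kappa\models\ZFC$ to produce a well-ordering of $V_{\kappa+1}$, then appeal to the well-ordering for the rest—but the execution has two genuine gaps, one of which undercuts the whole construction.

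The load-bearing step is your claim that \emph{``$V_\kappa\models\AC$ gives a well-ordering of $V_\kappa$ as a set, and that set has rank $\kappa$, so it is an element of $V_{\kappa+1}$.''} That is not true as stated: $V_\kappa\models\AC$ tells you every \emph{element} of $V_\kappa$ is well-orderable inside $V_\kappa$; it says nothing about the class $V_\kappa$ from $V_\kappa$'s point of view, and in $V$ (where $\AC$ may fail) you cannot simply choose well-orderings of the $V_\alpha$'s and splice them. So the relation $<^*$ on which your $F\colon V_{\kappa+1}\to\Ord$ depends has not been produced. The way out is exactly the observation you circle around but never commit to: by elementarity, $j(V_\kappa)=M_{j(\kappa)}$ satisfies $\ZFC$ (satisfaction for set models is absolute), and since $j\restriction V_\kappa=\id$ one checks that every $A\subseteq V_\kappa$ equals $j(A)\cap V_\kappa\in M$, whence $V_{\kappa+1}=\power(V_\kappa)^{M_{j(\kappa)}}\in M_{j(\kappa)}$. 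A transitive $\ZFC$-model well-orders any of its elements, and that well-ordering is genuine by absoluteness. This gives well-orderability of $V_{\kappa+1}$ directly, with no need for $<^*$ at all; the route through coding $j(A)$ by ordinals is both unnecessary and (as you half-notice, ``wait, that is circular'') not yet on solid ground.

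Your argument for regularity of $\kappa^+$ also has a flaw: you want to apply $j$ to a cofinal $f\colon\gamma\to\kappa^+$, but such an $f$ has rank at least $\kappa^+$, hence is not in $\dom j=V_{\kappa+1}$. The paper instead uses the fact that the canonical surjection $V_{\kappa+1}\to\kappa^+$ (coding ordinals $<\kappa^+$ by well-orderings of $\kappa$, which are elements of $V_{\kappa+1}$) together with the well-ordering of $V_{\kappa+1}$ yields a sequence $\tup{f_\alpha\mid\alpha<\kappa^+}$ with $f_\alpha\colon\alpha\hookrightarrow\kappa$; from such a sequence, the Ulam matrix argument gives regularity and non-measurability of $\kappa^+$ purely in $\ZF$, with no further use of $j$. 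You do gesture at the Ulam-style argument as a fallback, which is the correct move; the ``apply $j$ to $f$'' version should be dropped.
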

These consequences are somewhat similar in flavor to the theorems of Everett Bull in \cite{Bull1978}, where he proves a similar theorem for a measurable cardinal (albeit under the implicit assumption there are normal measures).
\begin{proof}
Let $j\colon V_{\kappa+1}\to M$ be an elementary embedding witnessing that $\kappa$ is critical. By elementarity $M_{j(\kappa)}\models\ZFC$ and since $V_{\kappa+1}\in M_{j(\kappa)}$, $V_{\kappa+1}$ can be well-ordered.

To see that $\kappa^+$ is regular, note that there is a definable surjection from $V_{\kappa+1}$ onto $\kappa^+$, so it follows from the well-orderability of $V_{\kappa+1}$ that there is a sequence $\tup{f_\alpha\mid\alpha<\kappa^+}$ such that $f_\alpha$ is an injective function from $\alpha$ to $\kappa$. This implies in $\ZF$ alone that $\kappa^+$ is regular and not measurable, using the standard Ulam matrix argument (see, for example, Corollary~2.4 in Kanamori's book \cite{Kanamori:2003}).
\end{proof}
Note that we do not require anything about the closure of the target model $M$, and while $\ZFC$ proves that there is always such $M$ satisfying $M^\kappa\subseteq M$, where $M$ is an ultrapower, it might not be the case in $\ZF$. This raises several questions.
\begin{question}\label{q:provable-closure}
If $\kappa$ is a critical cardinal, can we always find a transitive set $M$ such that $j\colon V_{\kappa+1}\to M$ witnesses that $\kappa$ is critical, and $M$ is closed under $\omega$-sequences? Under $<\kappa$-sequences? Under $V_\kappa$-sequences?
\end{question}
\begin{question}\label{q:ultrapowers}
If $\kappa$ is critical, and $\cU$ is a normal measure on $\kappa$, is it true that $V_{\kappa+1}^\kappa/\cU$ is well-founded and extensional? What about $V^\kappa/\cU$?
\end{question}
By Mitchell Spector's work in \cite{Spector1988}, the above question has a positive answer if and only if for every family of non-empty sets of size $\kappa$, we can find a partial choice function whose domain is in $\cU$.

We would also like to point out the obvious ways one can extend this definition by replacing $V_{\kappa+1}$ with larger initial segments, or requiring better closure properties of the target models. This leads us quite naturally to the next part.
\subsection{Supercompact cardinals}
The following formulation of supercompactness was identified by Woodin \cite[Definition~220]{Woodin2010} as the appropriate one for $\ZF$.
\begin{definition}A cardinal $\kappa$ is supercompact if for each $\alpha>\kappa$ there exist $\beta>\alpha$ and an elementary embedding, $j\colon V_\beta\to N$ such that
\begin{enumerate}
\item $N$ is a transitive set and $N^{V_\alpha}\subseteq N$,
\item $j$ has critical point $\kappa$,
\item $\alpha<j(\kappa)$.
\end{enumerate}
\end{definition}
Clearly, supercompact cardinals are critical cardinals. But the existence of arbitrarily large and arbitrarily closed target models lends to a greater impact on the structure of the universe. For example, we have the following theorem, which is also due to Woodin \cite[Lemma~225]{Woodin2010}.

\begin{theorem*}Suppose that $\kappa$ is a regular cardinal and $\DC_{<\kappa}$ holds, and $\delta$ is a supercompact cardinal such that $\delta>\kappa$. Then there is a forcing extension given by the forcing $\PP^\delta_\kappa=\Col(\kappa,<V_\delta)$ such that $\delta=\kappa^+$ and $\DC_\kappa$ holds.
\end{theorem*}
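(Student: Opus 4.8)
The plan is to analyse $\PP=\PP^\delta_\kappa=\Col(\kappa,<V_\delta)$ directly. First I would record that $\PP$ is $<\kappa$-closed: a decreasing sequence of conditions of length $<\kappa$ has its union as a lower bound, and $\DC_{<\kappa}$ lets one run the usual argument that $\PP$ adds no new $<\kappa$-sequence of ground model sets. Hence $\kappa$ stays regular, every cardinal and cofinality $\le\kappa$ is preserved, and $\DC_{<\kappa}$ continues to hold in $V[G]$ (given a $\lambda$-closed tree of height $\le\lambda<\kappa$, build a cofinal branch by a recursion of length $\lambda$ using $\DC_\lambda$ in $V$).

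Next I would exploit the factor structure. For each $\gamma<\delta$, splitting conditions by their domain gives $\PP\cong\Col(\kappa,V_\gamma)\times\PP'$, so $G$ induces a surjection $\kappa\twoheadrightarrow V_\gamma$; since $\kappa$ is an ordinal one may take least preimages, turning any surjection $\kappa\twoheadrightarrow\beta$ (for $\beta<\delta$) into an injection $\beta\hookrightarrow\kappa$. So no ordinal in $(\kappa,\delta)$ is a cardinal in $V[G]$, whence $(\kappa^+)^{V[G]}\geq\delta$. Moreover these surjections are uniformly definable from $G$, so from $G$ and the $\PP$-names of bounded rank one gets a surjection $\kappa\twoheadrightarrow V[G]_\gamma$, hence a well-ordering of $V[G]_\gamma$, for each $\gamma<\delta$ (and likewise of $V[G]_\delta$).

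The heart is that $\delta$ is not collapsed, and here supercompactness enters. Suppose some $p$ forces $\dot f\colon\check\gamma\to\check\delta$ to be surjective with $\gamma<\delta$. Fix a supercompact embedding $j\colon V_\beta\to N$ with $\crit(j)=\delta$, $N^{V_\alpha}\subseteq N$ (so $V_\alpha\in N$ and $V_\alpha^N=V_\alpha$), $\delta<j(\delta)$, and $\alpha$ large enough that $\PP,p,\dot f\in V_\alpha$. Since every condition has domain of size $<\kappa<\delta$ we have $j\restriction\PP=\id$, and $\PP$ is, up to isomorphism, the first factor of $j(\PP)=\Col(\kappa,<V_{j(\delta)})^N\cong\PP\times\mathbb{T}$, the tail $\mathbb{T}$ being $<\kappa$-closed in $N[G]$. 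Granting that one can build, inside $V[G]$, an $N[G]$-generic filter $H$ for $\mathbb{T}$, put $G^\ast=G\ast H$; then $j``G=G\subseteq G^\ast$, so $j$ lifts to an elementary $j^\ast\colon V_\beta[G]\to N[G^\ast]$ with $j^\ast(\dot x^G)=(j\dot x)^{G^\ast}$. Now $f=\dot f^G$ has rank $<\delta$, so $j^\ast(f)=f$; but by elementarity $j^\ast(f)$ is a surjection from $j^\ast(\gamma)=\gamma$ onto $j^\ast(\delta)=j(\delta)>\delta$, which is impossible since $\rng(f)\subseteq\delta$. Hence $\delta$ is preserved, so $\delta=(\kappa^+)^{V[G]}$. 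Finally, $\DC_\kappa$ in $V[G]$ should follow from the $<\kappa$-closure of $\PP$ together with the well-orderings of the $V[G]_\gamma$ ($\gamma<\delta$): for a $\kappa$-closed tree $T$ of height $\le\kappa$ without maximal nodes and with $\rank(T)<\delta$, $T$ is well-ordered in $V[G]$, so a cofinal branch is built by a length-$\kappa$ recursion making every choice (of an immediate successor, or at limits of an upper bound given by $\kappa$-closedness) canonically from the well-ordering; the general case is reduced to this by reflecting $T$ down to a well-ordered tree, using the largeness of $\delta$.

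The main obstacle is the bracketed step: constructing, without the Axiom of Choice, an $N[G]$-generic filter for the tail forcing $\mathbb{T}$ inside $V[G]$. One cannot simply enumerate and diagonalise against the dense subsets of $N[G]$, and $\mathbb{T}$ has rank above the closure point $\alpha$ of $N$, so neither the closure of $N$ nor the well-orderings from the collapse apply to it directly; this needs a dedicated construction in the spirit of (a tamer relative of) \autoref{thm:cummings-woodin}. The same kind of difficulty underlies the reflection step used for $\DC_\kappa$, since $V_\delta$ need not be well-orderable; in both places one expects to combine the closure of $N$ with the well-orderings manufactured by the collapse.
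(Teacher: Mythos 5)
This statement is quoted from Woodin (cited as \cite[Lemma~225]{Woodin2010}) and the paper does not reprove it, so there is no in-paper proof to compare against; I can only assess your argument on its own terms.

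Your outline of the easy parts is fine: $\Col(\kappa,<V_\delta)$ is $<\kappa$-closed (so $\kappa$ and $\DC_{<\kappa}$ survive), for each $\gamma<\delta$ the factor $\Col(\kappa,V_\gamma)$ produces a surjection $\kappa\twoheadrightarrow V_\gamma$, and hence every ordinal in $(\kappa,\delta)$ is collapsed. The two places you flag as obstacles are indeed where the proposal is incomplete. For the preservation of $\delta$, though, the lifting route you sketch is more machinery than is required, and it is worth noting a counting argument that bypasses the tail generic entirely. Since $\PP\cong\PP_\gamma\times\PP^\gamma$ for each $\gamma<\delta$, any $A\subseteq\kappa$ in $V[G]$ is definable from $G\cap\PP_{\gamma^*}$, $\dot A$ and the forcing relation, where $\gamma^*=\sup_{\alpha<\kappa}\gamma_\alpha<\delta$ and $\gamma_\alpha$ is least with some $p\in G\cap\PP_{\gamma_\alpha}$ deciding $\check\alpha\in\dot A$; hence $\mathcal P(\kappa)^{V[G]}=\bigcup_{\gamma<\delta}\mathcal P(\kappa)^{V[G_\gamma]}$. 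For each $\gamma$, inaccessibility of $\delta$ puts the nice $\PP_\gamma$-names for subsets of $\kappa$ into some $V_{\gamma'}$ with $\gamma'<\delta$, and the collapse gives a \emph{uniform} surjection $\kappa\twoheadrightarrow\mathcal P(\kappa)^{V[G_\gamma]}$. Together with the injection $\delta\hookrightarrow\mathcal P(\kappa)^{V[G]}$ via canonical codes, Cantor--Bernstein gives $\mathcal P(\kappa)^{V[G]}\leftrightarrow\delta$, which by Cantor forces $\delta$ to be a cardinal, hence $\delta=(\kappa^+)^{V[G]}$; only regularity and inaccessibility of $\delta$ are used. So your lifting argument, even if completed, is doing work that a more elementary argument already does.

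The genuine gap is $\DC_\kappa$, and your sketch does not close it. Well-orderability of $V[G]_\gamma$ for $\gamma<\delta$ (and hence of $V[G]_\delta=H_{\kappa^+}^{V[G]}$) is available, but a $\kappa$-closed tree $T$ witnessing a potential failure of $\DC_\kappa$ can have nodes of arbitrarily high rank and non--well-orderable levels, so ``reflect $T$ down to a well-ordered tree'' is not a step you can take for free: you would need to produce, in $V[G]$, an isomorphic or cofinally-branching subtree inside $H_{\kappa^+}$, and that is precisely the kind of choice you are trying to establish. This is the point at which supercompactness of $\delta$ should actually enter, and your proposal does not say how. It would also be where a construction in the spirit of \autoref{thm:cummings-woodin} (or Woodin's own argument) is needed; consulting the cited source for Lemma~225 is the right next move, since the paper itself offers no proof to compare with.
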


In the proof of the theorem, it is evident that $\delta^+$, and indeed all cardinals above $\delta$ are preserved. And since the extension satisfies $\DC_\kappa$, it means that $\cf(\delta^+)>\kappa$ there, which in turn implies that $\delta^+$ could not have small cofinality to begin with (we can always take $\kappa=\omega$ for this since $\DC_{<\omega}$ is a theorem of $\ZF$). We also make the observation that if $\delta$ is supercompact and $\AC$ fails, then it fails in $V_\delta$ as well. This leads to the definition of a \textit{nontrivial failure of choice} at a supercompact cardinal $\delta$, which means that $\delta$ is supercompact, but $\PP^\delta_\omega$ does not force the Axiom of Choice. In other words, the failure of choice is \textit{generated} by a specific set in $V_\delta$.

Woodin suggested, in a private communication, that the only way currently known to obtain a nontrivial failure of choice with a supercompact starts by assuming the existence of a Reinhardt cardinal, making the above question more interesting from a consistency strength point of view.
\begin{question}\label{q:nontrivial-spc}
Can we construct a model in which there is a nontrivial failure of choice above a supercompact just by starting with a single supercompact in $\ZFC$?
\end{question}
\section{Lifting embeddings to symmetric extensions}\label{section:silver}
If we want to prove theorems about critical cardinals in the absence of Choice, we need some technical machinery which allows us to create models of $\ZF$ where the Axiom of Choice is false, but there is a critical cardinal. In the case of $\ZFC$ we have Silver's theorem that lets us lift embeddings to the generic extension. Seeing how symmetric extensions are the basic tool for moving from models of $\ZFC$ to models of $\ZF+\lnot\AC$, the relevant generalization seems almost necessary.

Let $\cS=\tup{\PP,\sG,\sF}$ be a symmetric system and let $j\colon V\to M$ be an elementary embedding. We want to identify a sufficient condition for the embedding $j$ to be amenably lifted to the symmetric extension given by $\cS$. Namely, if $W$ is the symmetric extension of $V$ given by $\cS$, and $N$ is the symmetric extension of $M$ given by $j(\cS)$, we are looking for a condition that lets $j$ be extended to an embedding from $W$ to $N$, such that $j\restriction W_\alpha\in W$ for all $\alpha$. 

\begin{definition}\label{def:j-decomposable}
Let $\cS=\tup{\PP,\sG,\sF}$ be a symmetric system and let $j\colon V\to M$ be an elementary embedding. We say that $\cS$ is \textit{$j$-decomposable} if the following conditions hold.\footnote{This means that $j(\cS)$ is essentially a two-step iteration of symmetric systems with an $M$-generic for the second iterand.}
\begin{enumerate}
\item There is a condition $m\in j(\PP)$ and a name $\dot\QQ\in\HS_\sF$ such that:
\begin{enumerate}
\item $\pi\colon j(\PP)\restriction m\cong\PP\ast\dot\QQ$, $\pi$ extends the function $j(p)\mapsto\tup{p,1_\QQ}$,
\item with $\sym(\dot\QQ)=\sG$,
\item there is $\dot H\in\HS_\sF$ such that $\forces_\PP\dot H\text{ is symmetrically }\check M\text{-generic for }\dot\QQ$.
\end{enumerate} 
\item There is a name $\dot\sH$ such that: 
\begin{enumerate}
\item $\dot\sH\in\HS_\sF$ and $\forces_\PP\dot\sH\leq\aut(\dot\QQ)$, with $\sym(\dot\sH)=\sG$,
\item there is an embedding $\tau\colon\sG\to\sG\ast\dot\sH$, the generic semi-direct product, such that $\tau(\sigma)$ is given by applying $\pi$ to $j(\sigma)$,
\item $\tau(\sigma)=\tup{\sigma,\dot\rho}$, and $\dot\rho$ is a name such that $\forces_\PP\dot\rho``\dot H=\dot H$.
\end{enumerate}
\item The family $j``\sF$ is a basis for $j(\sF)$.
\end{enumerate}
\end{definition}
In the above definition, if $\pi(p^*)=\tup{p,\dot q}$ we write $\pi_0(p^*)=p$ and $\pi_1(p^*)=\dot q$.

\begin{definition}Under the notation of the previous definition and the assumption that $\cS$ is indeed $j$-decomposable, let $\dot x$ be a $j(\PP)$-name. We recursively define the \textit{partial interpretation of $\dot x$ by $\dot H$} as the $\PP$-name $\dot x^H$:\footnote{In order to be fully compatible, this would be $\dot x^{\dot H}$, but this just adds clutter to the page, so we chose to omit that dot.}
\[\dot x^H=\{\tup{\pi_0(p^*),\dot y^H}\mid\tup{p^*,\dot y}\in\dot x, \pi_0(p^*)\forces_\PP\pi_1(p^*)\in\dot H\}.\]
\end{definition}
In the rest of this section, we will use the above notation implicitly.

\begin{proposition}\label{prop:j-decomposable and j}
Suppose that $\cS$ is a $j$-decomposable symmetric system. Then for any $\sigma\in\sG$ and $(j(\PP)\restriction m)$-name, $\dot x$, we have $\sigma(\dot x^H)=(j(\sigma)\dot x)^{\sigma(H)}$.
\end{proposition}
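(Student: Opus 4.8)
The statement is an equivariance property of the partial-interpretation operation, and the natural strategy is induction on the rank of the $j(\PP)\restriction m$-name $\dot x$. The only real content is bookkeeping about how $\sigma\in\sG$ interacts with the decomposition data $\pi$, $\dot H$, and the map $\pi_0,\pi_1$. So first I would unwind the definition: by definition
\[\dot x^H=\{\tup{\pi_0(p^*),\dot y^H}\mid\tup{p^*,\dot y}\in\dot x,\ \pi_0(p^*)\forces_\PP\pi_1(p^*)\in\dot H\},\]
so applying $\sigma$ (as an automorphism of $\PP$, hence of $\PP$-names) gives
\[\sigma(\dot x^H)=\{\tup{\sigma\pi_0(p^*),\sigma(\dot y^H)}\mid\tup{p^*,\dot y}\in\dot x,\ \pi_0(p^*)\forces_\PP\pi_1(p^*)\in\dot H\}.\]
By the inductive hypothesis, $\sigma(\dot y^H)=(j(\sigma)\dot y)^{\sigma(H)}$, so the remaining task is purely at the level of the pairs: I must show that the index set $\{\tup{\sigma\pi_0(p^*),j(\sigma)\dot y}\mid\ldots\}$ is exactly the index set defining $(j(\sigma)\dot x)^{\sigma(H)}$.

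Second, I would compute the latter directly. Since $j(\sigma)\dot x=\{\tup{j(\sigma)p^*,j(\sigma)\dot y}\mid\tup{p^*,\dot y}\in\dot x\}$, the pairs of $(j(\sigma)\dot x)^{\sigma(H)}$ are $\tup{\pi_0(j(\sigma)p^*),(j(\sigma)\dot y)^{\sigma(H)}}$ for those $p^*$ with $\pi_0(j(\sigma)p^*)\forces_\PP\pi_1(j(\sigma)p^*)\in\sigma(H)$. Here is where I invoke the $j$-decomposability hypotheses: condition (2)(b)--(c) says $\pi\circ j(\sigma)=\tau(\sigma)\circ\pi$ in the sense that $\tau(\sigma)=\tup{\sigma,\dot\rho}$ acts on $\PP\ast\dot\QQ$ by $\tup{\sigma,\dot\rho}(p,\dot q)=\tup{\sigma p,\sigma(\dot\rho\dot q)}$, so reading off coordinates, $\pi_0(j(\sigma)p^*)=\sigma\pi_0(p^*)$ and $\pi_1(j(\sigma)p^*)=\sigma(\dot\rho\,\pi_1(p^*))$. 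The first equality already matches the first coordinate of $\sigma(\dot x^H)$, so the structure of the recursion is compatible.

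Third, I would match the side conditions. I must check that
\[\pi_0(p^*)\forces_\PP\pi_1(p^*)\in\dot H\quad\Longleftrightarrow\quad \pi_0(j(\sigma)p^*)\forces_\PP\pi_1(j(\sigma)p^*)\in\sigma(\dot H).\]
Using the coordinate identities above, the right-hand side reads $\sigma\pi_0(p^*)\forces_\PP\sigma(\dot\rho\,\pi_1(p^*))\in\sigma(\dot H)$. By the Symmetry Lemma applied to $\sigma\in\sG\leq\aut(\PP)$, this is equivalent to $\pi_0(p^*)\forces_\PP\dot\rho\,\pi_1(p^*)\in\dot H$. Now hypothesis (2)(c), $\forces_\PP\dot\rho``\dot H=\dot H$, together with the fact that $\dot\rho$ names an automorphism of $\dot\QQ$ (so $\dot\rho\,\pi_1(p^*)\in\dot H\iff\pi_1(p^*)\in\dot\rho^{-1}``\dot H=\dot H$), makes this equivalent to $\pi_0(p^*)\forces_\PP\pi_1(p^*)\in\dot H$, which is the left-hand side. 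Hence the index sets agree pairwise, and combined with the inductive identity on the $\dot y$'s, $\sigma(\dot x^H)=(j(\sigma)\dot x)^{\sigma(H)}$, completing the induction.

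\textbf{Main obstacle.} The only delicate point is keeping the three ``layers'' of action straight: $\sigma$ acting as an automorphism of $\PP$ (and of $\PP$-names like $\dot H$, $\dot\rho$, $\dot x^H$), $j(\sigma)$ acting on $j(\PP)$-names, and the transported automorphism $\tau(\sigma)=\tup{\sigma,\dot\rho}$ acting on $\PP\ast\dot\QQ$; the isomorphism $\pi$ is what mediates between the second and third. The single substantive lemma-use is the interplay $\forces_\PP\dot\rho``\dot H=\dot H$ with the Symmetry Lemma, and I expect verifying that $\dot\rho``\dot H=\dot H$ indeed gives $\dot\rho\dot q\in\dot H\iff\dot q\in\dot H$ (forced statement, using that $\dot\rho$ is forced to be an automorphism hence a bijection of $\dot\QQ$) to be the one spot where a careless argument could go wrong. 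Everything else is a routine induction on name rank, and I would not belabor it.
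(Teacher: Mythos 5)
Your proof is correct and follows essentially the same route as the paper: induction on the rank of $\dot x$, the coordinate identities $\pi_0(j(\sigma)p^*)=\sigma\pi_0(p^*)$ and $\pi_1(j(\sigma)p^*)=\sigma(\dot\rho\,\pi_1(p^*))$ coming from $\tau(\sigma)=\tup{\sigma,\dot\rho}$, the Symmetry Lemma to transport the forcing of membership, and condition 2(c) ($\forces_\PP\dot\rho``\dot H=\dot H$) to absorb $\dot\rho$. Your explicit remark that the biconditional uses the fact that $\dot\rho$ is forced to be a bijection of $\dot\QQ$ (so $\dot\rho``\dot H=\dot H$ gives $\dot q\in\dot H\iff\dot\rho\dot q\in\dot H$ in both directions) is a small point the paper leaves implicit, but it is not a different argument.
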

\begin{proof}

First we do a short analysis of the interactions of various automorphisms with $j$ and $\dot H$. If $p^*\in j(\PP)\restriction m$, let $\pi(p^*)$ be $\tup{p,\dot q}$. We observe the following fact: $\pi(j(\sigma) p^*)=\tup{\sigma p,\sigma(\dot\rho(\dot q))}$, where $\tau(\sigma)=\tup{\sigma,\dot\rho}$. Suppose now that $p\forces\dot q\in\dot H$, then by 2(c) in the definition of decomposability, $p\forces\dot\rho(\dot q)\in H$ as well. Therefore, $\sigma p\forces\sigma(\dot\rho(\dot q))\in\sigma\dot H$, which is the same as saying $\pi_0(j(\sigma)p^*)\forces\pi_1(j(\sigma)p^*)\in\sigma\dot H$. 

We can now prove by induction on the rank of $\dot x$ that $\sigma(\dot x^H)=(j(\sigma)\dot x)^{\sigma H}$:
\begin{align*}
\sigma(\dot x^H) 
&= \left\{\tup{\sigma(\pi_0(p^*)),\sigma(\dot y^H)}\middd\begin{array}{l}\tup{p^*,\dot y}\in\dot x, \text{ and}\\ \pi_0(p^*)\forces\pi_1(p^*)\in\dot H\end{array}\right\}\\
&= \left\{\tup{\sigma(\pi_0(p^*)),\sigma(\dot y^H)}\middd\begin{array}{l}\tup{j(\sigma)p^*,j(\sigma)\dot y}\in j(\sigma)\dot x, \text{ and}\\ \sigma(\pi_0(p^*))\forces\sigma(\pi_1(p^*))\in\sigma(\dot H)\end{array}\right\}\\
&= \left\{\tup{\pi_0(j(\sigma)p^*),(j(\sigma)\dot y)^{\sigma(H)}}\middd\begin{array}{l}\tup{j(\sigma)p^*,j(\sigma)\dot y}\in j(\sigma)\dot x, \text{ and}\\ \sigma(\pi_0(p^*))\forces\sigma(\pi_1(p^*))\in\sigma(\dot H)\end{array}\right\}\\
&= \left\{\tup{\pi_0(j(\sigma)p^*),(j(\sigma)\dot y)^{\sigma(H)}}\middd\begin{array}{l}\tup{j(\sigma)p^*,j(\sigma)\dot y}\in j(\sigma)\dot x, \text{ and}\\ \pi_0(j(\sigma)p^*)\forces\pi_1(j(\sigma)p^*)\in\sigma(\dot H)\end{array}\right\}\\
&= (j(\sigma)\dot x)^{\sigma(H)}.\qedhere
\end{align*}
\end{proof}
Under the notation and assumptions of the previous proposition, we have these corollaries.
\begin{corollary}\label{cor:H-fixing aut and j}
If $\sigma\dot H=\dot H$, then $\sigma(j(\dot x)^H)=(j(\sigma)(j(\dot x)))^H$.\qed
\end{corollary}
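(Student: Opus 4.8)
The plan is to read this off directly from \autoref{prop:j-decomposable and j}. That proposition, applied with the name $j(\dot x)$ in place of $\dot x$, already yields
\[\sigma\bigl(j(\dot x)^H\bigr) = \bigl(j(\sigma)(j(\dot x))\bigr)^{\sigma(H)},\]
so the entire task reduces to showing that the hypothesis $\sigma\dot H = \dot H$ collapses the superscript $\sigma(H)$ back to $H$. Before that, I would dispatch one bookkeeping point: \autoref{prop:j-decomposable and j} is phrased for $(j(\PP)\restriction m)$-names, whereas $j(\dot x)$ is a priori only a $j(\PP)$-name. But the partial interpretation $\dot z^H$ is defined for arbitrary $j(\PP)$-names, and every pair $\tup{p^*,\dot y}$ surviving into it satisfies $\pi_0(p^*)\forces_\PP\pi_1(p^*)\in\dot H$; conditions $p^*$ not below $m$ simply do not contribute, so one may freely replace $j(\dot x)$ by its restriction below $m$ without changing $j(\dot x)^H$, and the proposition applies verbatim.

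The key observation is then the following, which I would prove by a one-line induction on the rank of the name: for any $j(\PP)$-name $\dot z$, the $\PP$-name $\dot z^H$ depends on $\dot H$ only through the collection of pairs $\tup{p^*,\dot y}\in\dot z$ for which $\pi_0(p^*)\forces_\PP\pi_1(p^*)\in\dot H$. If $\sigma\dot H=\dot H$, then for every condition $p$ and every name $\dot q$ we have $p\forces_\PP\dot q\in\sigma\dot H$ if and only if $p\forces_\PP\dot q\in\dot H$; hence the set of retained pairs is unchanged at every stage of the recursion, and so $\dot z^{\sigma(H)}=\dot z^H$ as $\PP$-names. Applying this with $\dot z=j(\sigma)(j(\dot x))$ and combining with the displayed equality gives
\[\sigma\bigl(j(\dot x)^H\bigr) = \bigl(j(\sigma)(j(\dot x))\bigr)^{\sigma(H)} = \bigl(j(\sigma)(j(\dot x))\bigr)^H,\]
which is exactly the assertion.

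I do not anticipate a real obstacle: all the substance sits in \autoref{prop:j-decomposable and j}, and the corollary is merely its instance at $\dot x\mapsto j(\dot x)$ plus the trivial remark that an automorphism fixing $\dot H$ does not alter which pairs are kept when forming a partial interpretation. The only mild care needed is the domain issue flagged above and the insistence that $\dot z^{\sigma(H)}=\dot z^H$ be understood at the level of names (via the rank induction), rather than only after evaluation in a generic.
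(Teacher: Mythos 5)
Your proposal is correct and is exactly the intended argument: the paper marks this corollary with \qed precisely because it is an immediate instance of \autoref{prop:j-decomposable and j} at $j(\dot x)$, combined with the observation that the hypothesis $\sigma\dot H=\dot H$ makes the superscript $\sigma(H)$ coincide with $H$. If anything you have slightly over-engineered the last step: since $\sigma\dot H$ and $\dot H$ are literally the \emph{same name} under the hypothesis, the recursion defining $\dot z^{\sigma(H)}$ is verbatim the recursion defining $\dot z^H$, so no induction or appeal to forcing equivalence is needed to conclude $\dot z^{\sigma(H)}=\dot z^H$. Your bookkeeping remark about $(j(\PP)\restriction m)$-names versus $j(\PP)$-names is a reasonable and correct way to read the paper's implicit convention.
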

\begin{corollary}\label{cor:j-HS-and-H}
If $\dot x\in j(\HS)$, then $\dot x^H\in\HS$.
\end{corollary}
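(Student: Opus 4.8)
The plan is to argue by induction on the rank of the name $\dot x \in j(\HS)$, mirroring the inductive structure of the definition of the partial interpretation $\dot x^H$. We must show two things about $\dot x^H$: first, that it is $\sF$-symmetric, i.e.\ $\sym_\sG(\dot x^H) \in \sF$; and second, that every name appearing in $\dot x^H$ is hereditarily $\sF$-symmetric. The second point is immediate from the inductive hypothesis, since by construction every name appearing in $\dot x^H$ is of the form $\dot y^H$ for some $\dot y$ appearing in $\dot x$, and $\dot y \in j(\HS)$ because $\dot x \in j(\HS)$ and $M$ thinks $j(\HS)$ is closed under taking appearing names (apply $j$ to the analogous statement in $V$). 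So the content is entirely in verifying $\sF$-symmetry of $\dot x^H$.

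For the symmetry, the key input is Proposition~\ref{prop:j-decomposable and j}, or more precisely Corollary~\ref{cor:H-fixing aut and j}: if $\sigma \dot H = \dot H$, then $\sigma(j(\dot x)^H) = (j(\sigma)(j(\dot x)))^H$. Wait — the proposition as stated concerns $j(\PP)\restriction m$-names $\dot x$ and gives $\sigma(\dot x^H) = (j(\sigma)\dot x)^{\sigma(H)}$; I would use exactly this. Since $\dot x \in j(\HS)$, we have $\sym_{j(\sG)}(\dot x) \in j(\sF)$. By clause~(3) of $j$-decomposability, $j``\sF$ is a basis for $j(\sF)$, so there is some $H_0 \in \sF$ with $j(H_0) \subseteq \sym_{j(\sG)}(\dot x)$; that is, $j(\tau)(\dot x) = \dot x$ for every $\tau \in H_0$, equivalently $j(\sigma)(\dot x) = \dot x$ whenever $\sigma \in H_0$. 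Now let $K = H_0 \cap \sym_\sG(\dot H) \in \sF$ (both are in $\sF$; $\sym_\sG(\dot H) \in \sF$ since $\dot H \in \HS_\sF$). For $\sigma \in K$ we have, using Proposition~\ref{prop:j-decomposable and j} together with $\sigma(\dot H) = \dot H$ and $j(\sigma)\dot x = \dot x$:
\[
\sigma(\dot x^H) = (j(\sigma)\dot x)^{\sigma(H)} = \dot x^H.
\]
Hence $K \subseteq \sym_\sG(\dot x^H)$, and since $K \in \sF$ and $\sF$ is closed under supergroups, $\sym_\sG(\dot x^H) \in \sF$, so $\dot x^H$ is $\sF$-symmetric.

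Combining this with the inductive hypothesis that every $\dot y^H$ appearing in $\dot x^H$ is hereditarily $\sF$-symmetric, we conclude $\dot x^H \in \HS_\sF = \HS$. The main subtlety — and the step I would be most careful about — is the bookkeeping in the base case and the justification that $\dot x \in j(\HS)$ really does give both (a) $\sym_{j(\sG)}(\dot x) \in j(\sF)$ (so that clause~(3) applies to produce $H_0$) and (b) the names appearing in $\dot x$ again lie in $j(\HS)$; both are just elementarity of $j$ applied to the (internal, first-order-in-the-parameters) definition of $\HS_\sF$, but one should make sure the symmetric system $\cS$ is coded as a set so that $j(\cS) = j(\tup{\PP,\sG,\sF})$ makes sense and $j(\HS_\sF) = \HS_{j(\sF)}$ as computed in $M$. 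A secondary point worth a sentence is that Proposition~\ref{prop:j-decomposable and j} is stated for $(j(\PP)\restriction m)$-names, so one should note that every name in $j(\HS)$ that we care about can be taken to be (or be replaced by an equivalent) $(j(\PP)\restriction m)$-name below the condition $m$ — which is exactly the setting in which $\dot x^H$ was defined in the first place.
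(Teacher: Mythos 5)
Your argument matches the paper's proof: both pick (via clause~(3) of $j$-decomposability) a group $K\in\sF$ contained in $\sym(\dot H)$ whose pointwise $j$-image fixes $\dot x$, then apply \autoref{prop:j-decomposable and j} to get $\sigma(\dot x^H)=(j(\sigma)\dot x)^{\sigma(H)}=\dot x^H$ for $\sigma\in K$, and close the induction by noting names appearing in $\dot x^H$ are $\dot y^H$ for $\dot y$ appearing in $\dot x$. Your write-up is simply a more detailed version of the same proof, spelling out the elementarity and bookkeeping points that the paper leaves implicit.
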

\begin{proof}
Let $K$ be such that $K\subseteq\sym(\dot H)$ and $j(K)$ is a subgroup of $\sym_{j(\sG)}(\dot x)$, which exists due to the fact that $j``\sF$ is cofinal in $j(\sF)$. Then if $\sigma\in K$, by \autoref{prop:j-decomposable and j} we get that \[\sigma(\dot x^H)=(j(\sigma)\dot x)^H=\dot x^H.\] By induction on the rank of $\dot x$ the conclusion follows.
\end{proof}
\begin{corollary}\label{cor:j-and-HS}
If $\dot x\in\HS$, then $j(\dot x)^H\in\HS$.\qed
\end{corollary}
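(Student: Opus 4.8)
The plan is to read this off immediately from \autoref{cor:j-HS-and-H} via the elementarity of $j$. The key point is that ``$\dot y$ is hereditarily $\sF$-symmetric'' is a first-order statement about the single name $\dot y$, with parameters $\PP$, $\sG$, and a filter base generating $\sF$. Hence, starting from $\dot x\in\HS=\HS_\sF$ and applying $j$, we obtain that $j(\dot x)$ satisfies in $M$ the statement ``$j(\dot x)$ is hereditarily $j(\sF)$-symmetric'' — that is, $j(\dot x)\in j(\HS)$, where $j(\HS)$ denotes the class of hereditarily $j(\sF)$-symmetric $j(\PP)$-names, exactly as in the hypothesis of \autoref{cor:j-HS-and-H}.

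From there I would simply instantiate \autoref{cor:j-HS-and-H} at the name $j(\dot x)$ in place of $\dot x$: since $j(\dot x)\in j(\HS)$, that corollary gives $j(\dot x)^H\in\HS$, which is precisely the assertion to be proved. All the genuine work — using \autoref{prop:j-decomposable and j} to push the $H$-fixing automorphisms past $j$, and using clause (3) of $j$-decomposability (that $j``\sF$ is a basis for $j(\sF)$) to locate enough symmetries — has already been done inside \autoref{cor:j-HS-and-H}; the present statement is just the ``pushforward'' instance of it obtained by observing that $j$ carries $\HS$-names to $j(\HS)$-names.

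Accordingly, I do not expect any real obstacle here. The only point deserving a clause of care is the meaning of $j(\HS)$ when $\HS$ is a proper class, but this is harmless: membership in $\HS$ is first-order in the individual name, so $j(\dot x)\in j(\HS)$ is well defined and is exactly what \autoref{cor:j-HS-and-H} takes as input. This is why the statement is recorded with a bare \qed: the proof is a one-line chain $\dot x\in\HS\;\Rightarrow\;j(\dot x)\in j(\HS)\;\Rightarrow\;j(\dot x)^H\in\HS$, the first implication being elementarity and the second being \autoref{cor:j-HS-and-H}.
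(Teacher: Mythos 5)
Correct, and this is exactly the intended one-line argument: elementarity gives $j(\dot x)\in j(\HS)$, and then \autoref{cor:j-HS-and-H} applied to $j(\dot x)$ yields $j(\dot x)^H\in\HS$, which is why the paper records the corollary with a bare \qed.
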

\begin{theorem}[The Basic Lifting Theorem]\label{thm:lifting-criteria}
If $j\colon V\to M$ is an elementary embedding and $\cS$ is $j$-decomposable symmetric system, then $j$ can be amenably lifted to the symmetric extension defined by $\cS$.
\end{theorem}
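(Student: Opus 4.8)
The plan is to define the lifted embedding on hereditarily symmetric names and verify it is well-defined, elementary, and amenable. Given a hereditarily $\sF$-symmetric name $\dot x \in \HS_\sF$, the natural candidate for the lift $\widehat{\jmath}$ applied to $\dot x^G$ (where $G$ is the symmetrically generic filter giving the symmetric extension $W$) is $(j(\dot x)^H)^{\bar G}$, where $\bar G$ is the symmetrically generic filter for $\PP$ induced by $G$ on the first coordinate, and $H$ is interpreted via $\dot H$ using $\bar G$. By \autoref{cor:j-and-HS}, $j(\dot x)^H \in \HS$, so this is a legitimate element of $N$; the first task is to check this does not depend on the choice of name, i.e. that $\dot x^G = \dot y^G$ implies $(j(\dot x)^H)^{\bar G} = (j(\dot y)^H)^{\bar G}$. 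First I would do this by establishing a forcing-relation translation: for a symmetric formula $\varphi$ and names $\dot x_1, \dots, \dot x_n \in \HS$,
\[
p \forces^{\HS}_{\PP,\sG,\sF} \varphi(\dot x_1, \dots, \dot x_n) \iff j(p) \forces^{\HS}_{j(\PP),j(\sG),j(\sF)} \varphi(j(\dot x_1), \dots, j(\dot x_n)),
\]
which follows from elementarity of $j$ applied to the definable forcing relation $\forces^\HS$ (using clause (3), that $j``\sF$ is a basis for $j(\sF)$, so that $j(\HS_\sF) = \HS_{j(\sF)}$ is correctly computed). Combining this with the decomposition $j(\PP)\restriction m \cong \PP \ast \dot\QQ$ and the forcing theorem for the partial interpretation $\dot x \mapsto \dot x^H$ — namely that $\PP\ast\dot\QQ$ below $m$ forces $\varphi(j(\dot x))$ iff, after interpreting the $\dot\QQ$-part by $\dot H$, $\PP$ forces $\varphi(j(\dot x)^H)$ — one obtains
\[
p \forces^{\HS}_{\PP} \varphi(\dot x) \implies \pi_0(m\cdot\text{(something)}) \forces^{\HS}_{\PP} \varphi(j(\dot x)^H),
\]
and in particular well-definedness and elementarity of $\widehat{\jmath}$ on $W$.

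The second main task is amenability: showing $\widehat{\jmath}\restriction W_\alpha \in W$ for every $\alpha$. Here I would use the standard trick of representing the restricted embedding by a name. Fix $\alpha$ and let $\dot f$ be the name $\{\langle \dot x, j(\dot x)^H\rangle^\bullet_{\text{(with the right condition)}} \mid \dot x \in \HS_\sF,\ \dot x \text{ of rank} < \text{(bound from }\alpha)\}$; the point is that \autoref{cor:j-and-HS} places each $j(\dot x)^H$ back in $\HS$, and I must check $\dot f$ itself is hereditarily symmetric. For this I invoke \autoref{cor:H-fixing aut and j}: if $\sigma \dot H = \dot H$ then $\sigma(j(\dot x)^H) = (j(\sigma)(j(\dot x)))^H = (j(\sigma\dot x))^H$, so $\sigma$ permutes the pairs of $\dot f$ among themselves (matching $\dot x$ with $\sigma\dot x$), hence $\sym(\dot f)$ contains $\sym(\dot H) \cap (\text{the large group fixing the rank-bounded names setwise})$, which lies in $\sF$. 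Evaluating $\dot f$ at $\bar G$ gives exactly the graph of $\widehat{\jmath}\restriction W_\alpha$, an element of $W$.

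The step I expect to be the main obstacle is the interaction of the three moving parts in the well-definedness/elementarity argument: the elementarity-transported forcing relation lives over $j(\PP)$ with group $j(\sG)$ and filter $j(\sF)$, but the partial interpretation $\dot x \mapsto \dot x^H$ reinterprets the second coordinate of $\PP\ast\dot\QQ$ using the name $\dot H$, and one must be careful that $\dot H$ being *symmetrically* $M$-generic (clause 1(c)) — not merely generic — is exactly what makes $\dot x^H$ respect the $\HS$-forcing relation rather than just the ordinary one. Concretely, the subtlety is verifying a lemma of the form: for $\dot x \in \HS_{j(\sF)}$, $(p,\dot q) \forces^{\HS}_{\PP\ast\dot\QQ} \varphi(\dot x)$ and $p \forces \dot q \in \dot H$ together imply $p \forces^{\HS}_{\PP} \varphi(\dot x^{\dot H})$; this is the genuine content and is where \autoref{prop:j-decomposable and j} (the equivariance $\sigma(\dot x^H) = (j(\sigma)\dot x)^{\sigma(H)}$ together with $\dot\rho``\dot H = \dot H$ from clause 2(c)) gets used to show the relevant symmetric density requirements transfer correctly. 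Once that lemma is in hand, the rest is bookkeeping, and I would close by remarking that $\widehat{\jmath}$ extends $j$ since on ground-model objects $\check x \mapsto (j(\check x))^H = \widecheck{j(x)}$, and has critical point $\crit(j)$ as computed in $W$.
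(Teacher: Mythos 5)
Your proposal follows essentially the same route as the paper: the amenable lift is realized by the class-name $j^*=\sett{\tup{\dot x,j(\dot x)^H}\mid\dot x\in\HS}^\bullet$, whose stability under $\sym(\dot H)$ (via \autoref{cor:H-fixing aut and j}) puts each rank-bounded restriction $j^*\restriction\HS_\alpha$ into $\HS$, while elementarity is deferred to the elementarity of $j$ together with the forcing theorem. The paper's own proof is in fact terser than yours---it dispatches elementarity in a single sentence---so the transfer lemma you isolate for the partial interpretation $\dot x\mapsto\dot x^{\dot H}$ (invoking symmetric genericity of $\dot H$ and clause~2(c) of \autoref{def:j-decomposable}) is a correct and useful unpacking of exactly what that sentence is leaving implicit.
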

\begin{proof}
The class $j^*=\{\tup{\dot x,j(\dot x)^H}\mid\dot x\in\HS\}^\bullet$ is stable under all automorphisms in $\sym(\dot H)$, since if $\sigma\in\sym(\dot H)$, then \[\sigma(\tup{\dot x,j(\dot x)^H})=\tup{\sigma\dot x,(j(\sigma)j(\dot x))^H}=\tup{\sigma\dot x,(j(\sigma\dot x))^H}.\]
Therefore $j^*\restriction\HS_\alpha\in\HS$ for all $\alpha$, where $\HS_\alpha$ denotes all the names in $\HS$ with rank $<\alpha$. The elementarity of $j^*$ follows from the elementarity of $j$ and the forcing theorem.
\end{proof}
\begin{remark}
It is worth noting that $j^*$ itself might not be a class of the symmetric extension. It is unclear whether or not a subclass of $\HS$ which is stable under a large group of automorphisms is itself a class of the extension, and there is no reason to believe that it is. This is why we can only prove that the lifting is amenable, and not definable.
\end{remark}
A typical case would be where $j(\PP)\restriction m$ is particularly nice and $\sG$ fixes $\dot\QQ$ pointwise, for example, when $j(\PP)\restriction m\cong\PP\times\QQ$.

The following is a trivial generalization of the Levy--Solovay theorem.
\begin{theorem}\label{thm:Levy-Solovay}
Let $\kappa$ be a critical cardinal and let $\cS\in V_\kappa$ be a symmetric system, then $\cS$ is $j$-decomposable to any $j$ such that $\crit(j)=\kappa$.\qed
\end{theorem}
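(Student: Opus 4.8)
The plan is to observe that, because $\cS=\tup{\PP,\sG,\sF}\in V_\kappa$ and $\crit(j)=\kappa$, the embedding $j$ is the identity on everything in sight: $j(\PP)=\PP$, $j(\sG)=\sG$, $j(\sF)=\sF$, and moreover $j(\sigma)=\sigma$ for each $\sigma\in\sG$ and $j(H)=H$ for each $H\in\sF$, since all of these objects have rank below $\kappa$ and $j\restriction V_\kappa=\id$. In particular $j(\cS)=\cS$, so verifying $j$-decomposability comes down to exhibiting an entirely trivial decomposition of $j(\PP)\restriction m$.

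Concretely, I would take $m=1_\PP$, let $\dot\QQ$ be the canonical name for the one-point forcing, $\dot H$ the canonical name for its unique generic filter, and $\dot\sH$ the canonical name for the trivial automorphism group of $\dot\QQ$. Then $j(\PP)\restriction m=\PP$ is isomorphic to $\PP\ast\dot\QQ$ via $\pi\colon p\mapsto\tup{p,1_\QQ}$, and since $j(p)=p$ this $\pi$ extends $j(p)\mapsto\tup{p,1_\QQ}$; this is the isomorphism clause of 1(a). Since $\dot\QQ$, $\dot H$, and $\dot\sH$ are canonical names for sets of rank far below $\kappa$ — each built entirely from check-names — they are hereditarily $\sF$-symmetric with symmetry group $\sG$, which disposes of the $\HS_\sF$- and $\sym$-clauses in 1(a), 1(b), 1(c), and 2(a). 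The remaining assertion of 1(c), that $\dot H$ is forced to be symmetrically $\check M$-generic for $\dot\QQ$, is vacuous because $\dot\QQ$ is the one-point forcing, and the remaining assertion of 2(a), $\forces_\PP\dot\sH\leq\aut(\dot\QQ)$, holds because the one-point forcing admits only the identity automorphism.

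For the automorphism clauses, note that the generic semi-direct product $\sG\ast\dot\sH$ is just $\sG$ paired with the canonical name for the identity. I would set $\tau(\sigma)=\tup{\sigma,\dot{\id}}$; conjugating $j(\sigma)=\sigma\in\aut(\PP)$ through $\pi$ produces the automorphism $\tup{p,1_\QQ}\mapsto\tup{\sigma p,1_\QQ}$ of $\PP\ast\dot\QQ$, which is exactly the action of $\tup{\sigma,\id}$, so $\tau$ is the required embedding (2(b)); and with $\dot\rho=\dot{\id}$ one trivially has $\forces_\PP\dot\rho``\dot H=\dot H$, giving 2(c). Finally, 3 is immediate since $j``\sF=\sF=j(\sF)$, so $j``\sF$ is certainly a basis for $j(\sF)$.

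The only thing requiring any care — really bookkeeping rather than a genuine obstacle — is that when $\kappa$ is merely a critical cardinal the embedding $j$ is defined on $V_{\kappa+1}$ rather than on all of $V$; but since $\cS$, $\sG$, and $\sF$ all live in $V_\kappa$, and since the decomposition data $\dot\QQ$, $\dot H$, $\dot\sH$ can be chosen of rank well below $\kappa$, \autoref{def:j-decomposable} applies verbatim and, as indicated above, each of its clauses collapses to a triviality. (The lift of $j$ to the symmetric extension then follows from \autoref{thm:lifting-criteria}.)
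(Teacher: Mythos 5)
Your proposal is correct and is exactly the argument the paper has in mind: the theorem is stated with no proof precisely because, when $\cS\in V_\kappa$ and $\crit(j)=\kappa$, the embedding fixes $\PP$, $\sG$, $\sF$, and each $\sigma\in\sG$ pointwise, so the decomposition collapses to the trivial one with $\dot\QQ$ the one-point forcing, $\dot H$ its unique filter, and $\dot\sH$ the trivial group, and every clause of \autoref{def:j-decomposable} holds vacuously (with $j``\sF=\sF=j(\sF)$ for clause~3). Your parenthetical caveat about $j$ being defined only on $V_{\kappa+1}$ rather than $V$ is also well taken and is the only point where anything had to be said.
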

\begin{question}\label{q:closure-lifting}
\autoref{thm:lifting-criteria} tells us that an embedding can be lifted. But it tells us nothing about the closure of the target model, $M$. What sort of closure properties can we get from the amenable lifting?
\end{question}

The requirements in the definition of $j$-decomposable seem almost necessary. Of the three items in \autoref{def:j-decomposable}, item (3) is the odd duck. It seems to be somewhat limiting: if $\sF$ is $\kappa$-complete, then $j(\sF)$ is $j(\kappa)$-complete, which will often render the requirement as blatantly false. It seems to be sufficient for this proof, especially for the proof of \autoref{cor:j-HS-and-H}. This leads to these three questions.

\begin{question}\label{q:optimal-definitions}\begin{enumerate}\renewcommand{\theenumi}{\alph{enumi}}
\item What is the exact requirements needed for lifting an embedding to a symmetric extension? 
\item Moreover, since \autoref{thm:lifting-criteria} gives us that the entire embedding was lifted, what if we only want to lift an initial segment of it?
\item What can we weaken in that case? What if we only want to lift a weakly compact embedding (to obtain a weakly critical cardinal)?
\end{enumerate}
\end{question}
And of course, iterations. While the iteration of forcing extensions can be realized as a forcing extension, the question is subsumed into the Silver criterion when considering the $\ZFC$ case. This therefore raises the following question.
\begin{question}\label{q:iterations}
How do we formulate the generalization of \autoref{thm:lifting-criteria} to iterations of symmetric extensions?
\end{question}
\section{Successors of critical cardinals}\label{section:succ-of-crit}
In this section we show how very little $\ZF$ has to say about successors of a critical cardinal. We assume $\ZFC+\GCH$ for this proof.\footnote{As usual in these cases, $\GCH$ is only used to simplify cardinal arithmetic calculations and we can omit it by paying the price of a slightly less readable proof.}
\begin{theorem}\label{thm:singular-successor}
If $\kappa$ is a supercompact cardinal, then there is a symmetric extension in which $\kappa$ remains a critical cardinal, and $\kappa^+$ is singular such that $\cf(\kappa^+)<\kappa$.
\end{theorem}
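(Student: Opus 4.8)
The plan is to realise the desired model as a symmetric extension by a supercompact Radin forcing $\RR$ built from a coherent sequence $\cU$ on $\power_\kappa(\lambda)$ whose top measure has length a repeat point $\rho$, together with a carefully chosen symmetric system $\cS=\tup{\RR,\sG,\sF}$. The role of the repeat point is twofold: it keeps $\kappa$ critical (indeed supercompact) in the generic extension, and — more to the point here — it is exactly what makes the decomposition $j(\RR)\restriction m_0\cong\RR\ast\dot\QQ_0$ available, with the \emph{first factor literally $\RR$}, for a sufficiently supercompact embedding $j\colon V\to M$ with $\crit(j)=\kappa$; from the seeds-and-ultrapowers construction in the proof of \autoref{thm:cummings-woodin} one moreover extracts, inside $V$, an $M$-generic $\dot H_0$ for the tail $\dot\QQ_0$. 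The parameter $\lambda$ will be chosen to be a cardinal of $V$ of cofinality $\kappa_0<\kappa$ (for instance $\kappa_0=\omega$); since $\RR$ collapses $(\kappa,\lambda]$ onto $\kappa$ via the generic Radin club $C_G$ while leaving a good deal of the club structure only ``partially visible'' in the symmetric model, this is what will force $\kappa^{+}$ to become singular of cofinality $\kappa_0$.

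First I would set up the symmetric system. The group $\sG$ comes from the automorphisms of the Lévy-style collapses that the supercompact Radin forcing weaves in at its coordinates (these are highly homogeneous), organised so that each automorphism is supported on a bounded part of $\lambda$, and $\sF$ is generated by the \emph{chain} $\tup{\fix(\gamma)\mid\gamma<\lambda}$ of subgroups fixing all collapse data below $\gamma$. The reason for insisting that $\cf^V(\lambda)=\kappa_0<\kappa$ is precisely clause (3) of \autoref{def:j-decomposable}: because $\cf^V(\lambda)<\crit(j)$, the embedding $j$ is continuous at $\lambda$, so $j(\lambda)=\sup j``\lambda$; hence for every $\fix(\beta)\in j(\sF)$ with $\beta<j(\lambda)$ there is $\gamma<\lambda$ with $j(\gamma)>\beta$ and therefore $j(\fix(\gamma))=\fix(j(\gamma))\subseteq\fix(\beta)$, i.e.\ $j``\sF$ is a basis for $j(\sF)$. (This also explains why \autoref{thm:Levy-Solovay} is not enough: $\cS$ is far from lying in $V_\kappa$, so clause (3) must be arranged by hand.)

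Next I would verify the remaining clauses of $j$-decomposability and invoke the machinery. For clause (1) I take $m$ to realise the repeat-point decomposition $j(\RR)\restriction m\cong\RR\ast\dot\QQ$ with $\dot\QQ$ the tail Radin forcing, so that $\pi$ extends $j(r)\mapsto\tup{r,1}$; here $\sym(\dot\QQ)=\sG$ with $\sG$ fixing $\dot\QQ$ pointwise, so we are in the ``product'' special case noted after \autoref{thm:lifting-criteria} and clause (2) is vacuous (take $\dot\sH$ trivial, $\tau$ the obvious embedding). The $M$-generic $\dot H$ required in (1c) is the $\dot H_0$ obtained from the proof of \autoref{thm:cummings-woodin}; since it is (named by) an $\RR$-name and $\sG$ acts trivially on the measure-carrying part of $\RR$, it lies in $\HS_\sF$ trivially. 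Thus $\cS$ is $j$-decomposable, \autoref{thm:lifting-criteria} produces an amenable lift $j^{*}\colon W\to N$, and $j^{*}\restriction V_{\kappa+1}^{W}$ witnesses that $\kappa$ is a critical cardinal in $W$; $\kappa$ is regular there because it is regular in the generic extension containing $W$.

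Finally I would compute $\kappa^{+W}$. For each $\gamma<\lambda$ the portion of the collapse data below $\gamma$ is $\fix(\gamma)$-symmetric, so $W$ contains a surjection from $\kappa$ onto every ordinal in $(\kappa,\gamma)$; hence every ordinal below $\lambda$ has cardinality $\le\kappa$ in $W$, and $\kappa^{+W}\le\lambda$. Conversely, $\lambda$ is not collapsed in $W$: a surjection from an ordinal $<\lambda$ onto $\lambda$ would have a hereditarily symmetric name supported by some $\fix(\gamma)$, $\gamma<\lambda$, hence would already belong to an intermediate extension that collapses only $(\kappa,\gamma)$ — which cannot reach the limit cardinal $\lambda$. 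Therefore $\kappa^{+W}=\lambda$, and since forcing cannot raise cofinalities and a cofinal $\kappa_0$-sequence in $\lambda$ already lies in $V\subseteq W$, we get $\cf^{W}(\kappa^{+})=\kappa_0<\kappa$, as required.

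The step I expect to be the main obstacle is making the \emph{collapsing} fit into a $j$-decomposable system: a Lévy collapse living above $\kappa$ does not self-factor under $j$ the way the repeat-point Radin part does — $j$ replaces the collapse target $\kappa$ by $j(\kappa)$ — so the collapsing that singularises $\kappa^{+}$ has to be carried out \emph{inside} the supercompact Radin forcing's own coordinates (whose $j$-images split as $\RR\ast\dot\QQ$ with first factor literally $\RR$), and one must check that the induced automorphism group genuinely acts on $\RR$ without disturbing the $\cU$-largeness requirements and still yields $\sym(\dot\QQ)=\sG$. A secondary, more routine difficulty is the ``symmetric name with bounded support lives in a bounded intermediate extension'' lemma for this non-homogeneous system, which is what pins $\kappa^{+W}=\lambda$ down exactly.
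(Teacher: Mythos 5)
Your high-level plan --- symmetric system over supercompact Radin forcing cut at a repeat point, $\lambda$ chosen of cofinality $<\kappa$ so that $j``\sF$ is cofinal in $j(\sF)$ and clause~(3) of \autoref{def:j-decomposable} holds --- does match the paper's, but two of the working pieces are wrong in ways that break the argument. First, the automorphism group: Krueger's supercompact Radin forcing has no L\'evy-collapse factors for your ``collapse automorphisms'' to act on; the collapsing of $(\kappa,\lambda)$ is a side effect of the Radin club, not a product component. The paper's $\sG$ consists of automorphisms $\sigma_h$ of $\RR$ induced by cardinality-preserving permutations $h$ of $\lambda$ with $h\restriction\kappa=\id$, acting on $\power_\kappa(\lambda)$ by $x\mapsto h``x$; these respect the measures because $\{x\mid h``x=x\}$ is a club. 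Without this group there is no analogue of \autoref{lemma:pseudo-homogeneity} and \autoref{cor:sets-of-ordinals}, and hence no argument pinning down $\kappa^{+W}=\lambda$. Moreover clause~(2) of decomposability is \emph{not} vacuous with the correct $\sG$: $i(\sigma_h)$ decomposes as $(\sigma_h,\sigma_{h'})$ with $\sigma_{h'}$ a nontrivial automorphism of the tail, and the requirement $\sigma_{h'}``H=H$ is a real verification, which the paper handles by showing the clubs $B_h=\{x\mid h``x=x\}$ survive through the iterated ultrapower that builds $H$.

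Second, and more fatally, you cannot extract in $V$ an $M$-generic for the tail $\QQ$. That tail has size on the order of $j(\lambda)^{<j(\kappa)}$ computed in $M$, which vastly exceeds the closure $M^{\lambda}\subseteq M$, so $V$ cannot meet all the dense subsets of $\QQ$ that lie in $M$; \autoref{thm:cummings-woodin} run over $V$ produces a generic over a \emph{further} target model, not over $M$. What the paper actually does in \autoref{lemma:short-filters-lifting} is apply \autoref{thm:cummings-woodin} \emph{inside} $M$ to produce $k\colon M\to N$ together with an $N$-generic filter for $k(j(\RR))$ lying in $M$, and then lift the composite $i=k\circ j\colon V\to N$. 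The decomposition fed into \autoref{thm:lifting-criteria} is that of $i(\RR)\restriction m$ with $m=k(m_*)$, and the symmetrically generic filter in clause~(1c) is $N$-generic, not $M$-generic. Your write-up lifts $j$ alone and posits an $M$-generic for $\QQ$ in $V$; that step simply fails, and the extra ultrapower $k$ is what rescues it.
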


\subsection{The symmetric Radin system}
Let $\lambda>\kappa$ be a limit cardinal, and let us assume that $\kappa$ is at least $(2^{\lambda^{<\kappa}})^+$-supercompact. By \cite{Krueger2007}, there is a coherent sequence over $\power_\kappa(\lambda)$, $\tilde{\cU}$ of length $(2^{\lambda^{<\kappa}})^+$. In particular, there is a repeat point, $\rho$, in $\tilde{\cU}$. Let $\cU = \tilde{\cU} \restriction \rho + 1$. Let $\RR$ be the supercompact Radin forcing which is defined from $\cU$. 

If $h\colon\lambda\to\lambda$ is a permutation, then $h_*(x)=h``x$ defines a permutation of $\power_\kappa(\lambda)$, and thus a permutation of $\power(\power_\kappa(\lambda))$ defined in a similar way: $h_{**}(A)=h_*``A$.

For every permutation $h$, the set $\{x\in\power_\kappa(\lambda)\mid h_*(x)=x\}$ is a club, so for all $A\subseteq\power_\kappa(\lambda)$, $A\sdiff h_{**}(A)$ is non-stationary, and in particular has measure zero in any normal measure on $\power_\kappa(\lambda)$. It follows that the natural action of $h_*$ and $h_{**}$ on $\RR$, defines an automorphism of $\RR$.\footnote{We are being slightly inaccurate: $h$ induces an automorphism of a dense subset of $\RR$. But we ignore this in favor of readability.} We will use $\sigma_h$ to denote this automorphism.

Let $\sG$ be the group of all automorphisms $\sigma_h$, induced by a permutation of $\lambda$, $h$, such that $h\restriction\kappa=\id$ and $h$ preserves cardinality, namely $|h(\alpha)|=|\alpha|$.

Define $F_\alpha$ to be the subgroup of $\sG$ of those $\sigma_h$ for which $h\restriction\alpha=\id$. And let $\sF$ be the filter of subgroups generated by $\{F_\alpha\mid\alpha<\lambda\text{ is a cardinal}\}$.

\begin{proposition}\label{prop:radin-normality}
For every $\sigma_h\in\sG$ and a cardinal $\alpha<\lambda$, $\sigma_h F_\alpha\sigma_h^{-1}=F_\alpha$. Consequently, $\sF$ is a normal filter of subgroups.
\end{proposition}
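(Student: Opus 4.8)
The plan is to reduce the proposition to one elementary fact about permutations of $\lambda$: if $h$ preserves cardinality then $h``\alpha=\alpha$ for every cardinal $\alpha<\lambda$. First I would record that $h\mapsto\sigma_h$ is multiplicative, $\sigma_h\circ\sigma_g=\sigma_{hg}$, which is immediate from $(hg)``x=h``(g``x)$ on $\power_\kappa(\lambda)$ and the analogous identity one level up on $\power(\power_\kappa(\lambda))$; hence $\sigma_h\sigma_g\sigma_h^{-1}=\sigma_{hgh^{-1}}$, so conjugation inside $\sG$ is computed by conjugation of the underlying permutations of $\lambda$. The usual ``$\sigma_h$ is only an automorphism of a dense subset of $\RR$'' caveat already in force here makes no difference to this computation. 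In particular $\sigma_h^{-1}=\sigma_{h^{-1}}$, and since $\sG$ is a group this tells us that whenever $h$ is cardinality-preserving and fixes $\kappa$ pointwise, so is $h^{-1}$.

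For the core fact: let $\alpha<\lambda$ be a cardinal and $h$ cardinality-preserving. If $\beta<\alpha$ then $|\beta|<\alpha$ because $\alpha$ is a cardinal, hence $|h(\beta)|=|\beta|<\alpha$, and again because $\alpha$ is a cardinal this forces $h(\beta)<\alpha$. Thus $h``\alpha\subseteq\alpha$, and applying the same reasoning to $h^{-1}$ gives $h``\alpha=\alpha$.

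Now I combine the two. Fix $\sigma_h\in\sG$ and $\sigma_g\in F_\alpha$, so $g\restriction\alpha=\id$. For $\beta<\alpha$ we have $h^{-1}(\beta)<\alpha$ by the core fact, hence $g(h^{-1}(\beta))=h^{-1}(\beta)$, hence $(hgh^{-1})(\beta)=\beta$. Therefore $hgh^{-1}\restriction\alpha=\id$, i.e.\ $\sigma_h\sigma_g\sigma_h^{-1}=\sigma_{hgh^{-1}}\in F_\alpha$. This proves $\sigma_hF_\alpha\sigma_h^{-1}\subseteq F_\alpha$ for every $\sigma_h\in\sG$; applying this inclusion to $\sigma_h^{-1}=\sigma_{h^{-1}}\in\sG$ in place of $\sigma_h$ gives $\sigma_h^{-1}F_\alpha\sigma_h\subseteq F_\alpha$, and conjugating both sides by $\sigma_h$ yields the reverse inclusion $F_\alpha\subseteq\sigma_hF_\alpha\sigma_h^{-1}$. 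Hence $\sigma_hF_\alpha\sigma_h^{-1}=F_\alpha$.

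Finally, normality of $\sF$: it is closed under finite intersections because $F_\alpha\cap F_\beta=F_{\max(\alpha,\beta)}$, and closed under supergroups by construction; and if $H\in\sF$, say $F_\alpha\subseteq H$ for a cardinal $\alpha<\lambda$, then for any $\pi=\sigma_h\in\sG$ we get $\pi H\pi^{-1}\supseteq\pi F_\alpha\pi^{-1}=F_\alpha$, so $\pi H\pi^{-1}\in\sF$. The only point demanding any care is the translation between conjugation of automorphisms of $\RR$ and conjugation of permutations of $\lambda$, but that is the same routine bookkeeping already invoked when $\sigma_h$ was declared an automorphism; I do not expect genuine difficulty there. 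The mathematical content is entirely the one-line observation that cardinality-preserving permutations fix cardinal-indexed initial segments setwise.
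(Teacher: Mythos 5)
Your proof is correct and follows essentially the same route as the paper's: isolate the fact that a cardinality-preserving permutation of $\lambda$ must fix every cardinal $\alpha<\lambda$ setwise, then compute the conjugate $h g h^{-1}$ on points below $\alpha$. You are a bit more careful than the paper in applying the cardinality argument to $h^{-1}$ rather than $h$ (the paper writes $|h(\xi)|$ where $|h^{-1}(\xi)|$ is what is actually needed) and in spelling out both inclusions, but the mathematical content is identical.
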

\begin{proof}
Note that if $\sigma_g\in F_\alpha$, then $g\restriction\alpha=\id$. By the fact that $h$ preserves cardinality, if $\xi<\alpha$, then $|h(\xi)|=|\xi|<\alpha$ and therefore, $g(h^{-1}(\xi))=h^{-1}(\xi)$.

Therefore $\sigma_h\circ\sigma_g\circ\sigma_h^{-1}=\sigma_{h\circ g\circ h^{-1}}\in F_\alpha$.
\end{proof}
Let $\cS$ denote the symmetric system $\tup{\RR,\sG,\sF}$, and let $\HS$ denote the class of hereditarily symmetric names. Since the $F_\alpha$'s generate $\sF$, we say that $F_\alpha$ is a support for $\dot x\in\HS$ if $F_\alpha$ is a subgroup of $\sym(\dot x)$.

Our goal is to show that $\cS$ is $i$-decomposable for a suitable elementary embedding $i$, and that we can control the subsets of $\lambda$ which are symmetric---in particular, we can ensure that it is not collapsed.

In many symmetric extensions we control the sets of ordinals added by using homogeneity. This is one of the main reasons why so many examples of symmetric extensions use homogeneous forcings such as Levy collapses. However, the Radin forcing is far from homogeneous. Instead, we use the following lemma to get a modicum of homogeneity which will be sufficient for our proof.

Fix $\alpha\in(\kappa,\lambda)$. For a given condition $p\in\RR$, recall that $\stem p$ is $\tup{x^p_0,\ldots,x^p_{n-1}}$. Define $\pi_\alpha(p)$ to be the sequence \[\tup{x^p_0\cap\alpha,\ldots,x^p_{n-1}\cap\alpha} ^\smallfrown \tup{\otp (x^p_i \cap \beta) \mid i < n, \beta = \cf \beta \leq \lambda}.\]
Namely, $\pi_\alpha(p)$ contains the information about the value of the projection of the stem of $p$ below $\alpha$ as well as the order types of the intersection of each $x^p_i$ with regular cardinals below $\lambda$.
\begin{lemma}\label{lemma:pseudo-homogeneity}
If $\pi_\alpha(p)=\pi_\alpha(q)$, then there is an automorphism $\sigma_h\in F_\alpha$ such that $\sigma_h(p)$ is compatible with $q$.
\end{lemma}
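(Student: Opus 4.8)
The idea is to build the permutation $h$ of $\lambda$ explicitly from the data encoded in $\pi_\alpha(p)=\pi_\alpha(q)$. Write $\stem p=\tup{x_0^p,\dots,x_{n-1}^p}$ and $\stem q=\tup{x_0^q,\dots,x_{n-1}^q}$; the hypothesis $\pi_\alpha(p)=\pi_\alpha(q)$ gives us, for each $i<n$, that $x_i^p\cap\alpha=x_i^q\cap\alpha$ and that $\otp(x_i^p\cap\beta)=\otp(x_i^q\cap\beta)$ for every regular $\beta\le\lambda$. In particular $x_i^p$ and $x_i^q$ have the same order type, and they agree below $\alpha$. First I would define $h$ on $\bigcup_{i<n} x_i^p$ so that it carries $x_i^p$ to $x_i^q$ order-isomorphically — this is consistent because the $x_i^p$ are $\ssubset$-increasing and the order-type agreement on every regular cardinal below $\lambda$ guarantees the piecewise isomorphisms cohere on intersections, and it fixes everything below $\alpha$ by the first clause of $\pi_\alpha$. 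Then extend $h$ to a cardinality-preserving permutation of all of $\lambda$ which is the identity below $\alpha$: on each cardinal-interval $[\mu,\mu^+)$ we have moved only $<\kappa\le\mu$ many points (and moved them within the right cardinality class, again by the order-type-on-regular-cardinals condition), so the complement has full cardinality in every such interval and the bijection can be completed. This $h$ witnesses $h\restriction\kappa=\id$ (since $\alpha>\kappa$), $h\restriction\alpha=\id$, and $|h(\xi)|=|\xi|$, so $\sigma_h\in F_\alpha$.

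**The compatibility step.** Next I would check that $\sigma_h(p)$ is compatible with $q$. By construction $\sigma_h(p)$ has stem $\tup{x_0^q,\dots,x_{n-1}^q}=\stem q$ — because $\sigma_h$ acts on a condition by applying $h_*$ to the stem entries and $h_{**}$ to the large sets. Since, as already noted in the paper just before the definition of $\stem$, any two conditions with the same stem are compatible, we conclude $\sigma_h(p)\parallel q$. The only subtlety is that $\sigma_h$ is, strictly speaking, an automorphism of a dense subset of $\RR$ (the footnote before the definition of $\sG$), so I should phrase the conclusion as: $\sigma_h(p)$ and $q$ have a common extension, which is all that "compatible" requires; the large sets attached to $\sigma_h(p)$ are $h_{**}$-images of measure-one sets, hence still measure-one (as the club of fixed points of $h_*$ has measure one, so $A\sdiff h_{**}(A)$ is null), so the common-stem argument goes through verbatim.

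**Main obstacle.** The genuinely fiddly point is the coherence of the piecewise order-isomorphisms $x_i^p\to x_i^q$ into a single well-defined function $h$ on $\bigcup_i x_i^p$. Two entries $x_i^p\subseteq x_j^p$ (for $i<j$) overlap, and I need the isomorphism $x_j^p\to x_j^q$ to restrict correctly on $x_i^p$. This is where the order-type data at \emph{every} regular cardinal $\beta\le\lambda$ is used rather than just at $\lambda$: knowing $\otp(x_i^p\cap\beta)=\otp(x_i^q\cap\beta)$ for all regular $\beta$ pins down, cardinal-block by cardinal-block, exactly which points of $x_j^q$ must be the image of $x_i^p\cap[\mu,\mu^+)$, and combined with $x_i^p\cap\alpha=x_i^q\cap\alpha$ this forces the two isomorphisms to agree on the overlap. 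I would isolate this as the one lemma-internal computation to do carefully; once it is in place, the extension of $h$ to all of $\lambda$ and the compatibility conclusion are routine.
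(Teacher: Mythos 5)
Your plan matches the paper's structurally: build $h$ on the stem of $p$ so that it carries $x_i^p$ to $x_i^q$, extend it to a cardinality-preserving permutation of $\lambda$ that is the identity below $\alpha$, and appeal to ``same stem implies compatible.'' The extension step and the compatibility step are fine. But the mechanism you propose for the first step --- sending each $x_i^p$ to $x_i^q$ \emph{order-isomorphically} --- fails at exactly the ``fiddly point'' you flag: the piecewise order-isomorphisms need not cohere, and the data recorded by $\pi_\alpha$ does \emph{not} pin them down. The clauses $x_i^p\cap\alpha=x_i^q\cap\alpha$ and $\otp(x_i^p\cap\beta)=\otp(x_i^q\cap\beta)$ control how \emph{many} points of $x_i^p$ lie in each cardinal block $[\mu,\mu^+)$, but say nothing about \emph{which} positions inside $x_j^p$ (for $i<j$) the set $x_i^p$ occupies. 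For a concrete obstruction, take $x_1^p=x_1^q=X$, choose distinct $a<b$ in $X\cap[\mu,\mu^+)$ for some regular $\mu\geq\alpha$, and set $x_0^p=Y\cup\{a\}$, $x_0^q=Y\cup\{b\}$ with $Y\subseteq X$ disjoint from $[\mu,\mu^+)$. Then $\pi_\alpha(p)=\pi_\alpha(q)$, yet the unique order-isomorphism $X\to X$ is the identity and hence does not map $x_0^p$ onto $x_0^q$: the order-isomorphism on $x_1$ does not restrict to the one on $x_0$.

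The fix, and what the paper actually does, is to build $h$ recursively using arbitrary suitably constrained bijections rather than order-isomorphisms. Having arranged $h_{k-1}(x_i^p)=x_i^q$ for $i<k$, extend by \emph{any} bijection from $h_{k-1}``\bigl(x_k^p\setminus x_{k-1}^p\bigr)$ onto $x_k^q\setminus x_{k-1}^q$ that fixes $\alpha$ pointwise (possible because the intersections with $\alpha$ agree) and maps each block $[\mu,\mu^+)$ to itself (possible because, by the order-type clause of $\pi_\alpha$, the per-block cardinalities match). The freedom to pick the bijection at stage $k$, rather than being forced into the order-isomorphism, is precisely what absorbs the mismatch in the example above. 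The rest of your argument --- extending $h$ to a cardinality-preserving permutation of all of $\lambda$, and invoking the observation that two conditions with the same stem are compatible --- goes through unchanged.
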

\begin{proof}
Recall that the members of the Radin club are strongly increasing, we define a permutation $h$ recursively in $n$ steps, such that $h_*(x_i^p)=x_i^q$ and $h\restriction\alpha=\id$. Suppose that we defined $h_{k-1}$ such that $h_{k-1}(x^p_i)=x^q_i$ for all $i<k$, with $h_{-1}=\id$. As $|x_i^p|<x^p_k\cap\kappa$ for all $i<k$, there is a bijection $g$ between $x_k^p\setminus\bigcup_{i<k}x_i^p$ and $x_k^q\setminus\bigcup_{i<k}x_i^q$. Moreover, since $x_k^p\cap\alpha=x_k^q\cap\alpha$, we can assume that $g$ does not move any point below $\alpha$ and since $|x_k^p \cap \beta| = |x_k^q \cap \beta|$ for all regular cardinal $\beta \leq \lambda$, we may assume that $g$ preserves cardinality. Therefore $h_k$ can be taken as $g\circ h_{k-1}$.
\end{proof}

Let $\bar{\lambda}$ be $|\mathrm{Reg} \cap [\kappa, \lambda]|$, i.e.\ the cardinality of the set of regular cardinals between $\kappa$ and $\lambda$.

\begin{corollary}\label{cor:sets-of-ordinals}
Let $\dot f\in\HS$ be such that $\forces\dot f\colon\check\tau\to\check\Ord$ for $\tau\in\Ord$, and suppose that $F_\alpha$ is a support for $\dot f$. Then there exists a sequence of sets $\tup{B_\rho\mid\rho<\tau}$ such that $|B_\rho|\leq\alpha^{<\kappa} \cdot \kappa^{\bar{\lambda}}$ and $\forces\forall\rho,\dot f(\rho)\in\check B_\rho$.
\end{corollary}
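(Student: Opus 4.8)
The plan is to show that the value $\dot f$ assigns to a fixed $\rho<\tau$ depends only on the projection $\pi_\alpha$ of a condition deciding it, and then to count how many values $\pi_\alpha$ can take. So fix $\rho<\tau$ and let $B_\rho$ be the set of ordinals $\gamma$ such that $p\forces\dot f(\check\rho)=\check\gamma$ for some $p\in\RR$; by the forcing theorem $\forces\dot f(\check\rho)\in\check B_\rho$, and likewise for the whole sequence $\tup{B_\rho\mid\rho<\tau}$ (which exists, as we work in $\ZFC$), so it suffices to bound each $|B_\rho|$. For each $\gamma\in B_\rho$ fix a witness $p_\gamma$. The crux is that $\gamma\mapsto\pi_\alpha(p_\gamma)$ is injective: if $\pi_\alpha(p_\gamma)=\pi_\alpha(p_\delta)$, then \autoref{lemma:pseudo-homogeneity} gives $\sigma_h\in F_\alpha$ with $\sigma_h(p_\gamma)$ compatible with $p_\delta$; since $F_\alpha$ is a support of $\dot f$ we have $\sigma_h\dot f=\dot f$, and $\sigma_h$ fixes the canonical names $\check\rho$ and $\check\gamma$, so the Symmetry Lemma yields $\sigma_h(p_\gamma)\forces\dot f(\check\rho)=\check\gamma$. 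A common extension of $\sigma_h(p_\gamma)$ and $p_\delta$ then forces $\check\gamma=\dot f(\check\rho)=\check\delta$, whence $\gamma=\delta$. Hence $|B_\rho|$ is at most the number of possible values of $\pi_\alpha$, and we take this $B_\rho$.

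What remains is a cardinal-arithmetic count. A value $\pi_\alpha(p)$ is coded by the finite stem-length $n$, the tuple $\tup{x^p_0\cap\alpha,\dots,x^p_{n-1}\cap\alpha}$ whose entries lie in $[\alpha]^{<\kappa}$ (as each $|x^p_i|<\kappa$), and a tuple of order types $\otp(x^p_i\cap\beta)<\kappa$. Using that each member $x^p_i$ of a Radin condition satisfies $x^p_i\cap\kappa\in\kappa$, the order types at regular $\beta<\kappa$ are already determined by $x^p_i\cap\kappa$, so the order-type data amounts to a function from a set of size at most $\bar\lambda$ into $\kappa$; there are at most $\alpha^{<\kappa}$ choices for the tuple of $[\alpha]^{<\kappa}$-entries of a fixed length and $\kappa^{\bar\lambda}$ choices for the order-type data. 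Summing over $n<\omega$ gives at most $\aleph_0\cdot\alpha^{<\kappa}\cdot\kappa^{\bar\lambda}=\alpha^{<\kappa}\cdot\kappa^{\bar\lambda}$, as wanted.

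The only step with genuine content is the injectivity claim, which is handled entirely by \autoref{lemma:pseudo-homogeneity} together with the Symmetry Lemma; the point to watch is that $\pi_\alpha$ records just enough information — in particular the cardinalities $|x^p_i\cap\beta|$ for every regular $\beta\le\lambda$ — to feed the hypothesis of \autoref{lemma:pseudo-homogeneity}, which is exactly how $\pi_\alpha$ is set up, while the redundancy of the order types below $\kappa$ is what keeps the final count at $\kappa^{\bar\lambda}$ rather than $\kappa^{|\mathrm{Reg}\cap\lambda|}$.
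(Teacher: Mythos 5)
Your proof is correct and follows essentially the same route as the paper: define $B_\rho$ as the set of all possible forced values, invoke \autoref{lemma:pseudo-homogeneity} together with $F_\alpha\subseteq\sym(\dot f)$ and the Symmetry Lemma to show that $\pi_\alpha(p)$ determines the value $p$ forces for $\dot f(\check\rho)$, and then count the possible values of $\pi_\alpha$. You spell out the injectivity step and the redundancy of the order types below $\kappa$ (in fact below $\alpha$) more explicitly than the paper does, but the underlying argument is the same.
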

\begin{proof}
Let $B_\rho$ be the set $\{\xi\mid\exists p, p\forces\dot f(\check\rho)=\check\xi\}$. Then by \autoref{lemma:pseudo-homogeneity}, if $\pi_\alpha(p)=\pi_\alpha(q)$, then there is some $\sigma_h\in F_\alpha$ such that $\sigma_h(p)$ is compatible with $q$. But since $\sigma_h(\dot f)=\dot f$, $q$ and $p$ cannot force different values for $\dot f(\check\rho)$. Since there are only $\alpha^{<\kappa} \cdot \kappa^{\bar{\lambda}}$ possible values for $\pi_\alpha(p)$, it has to be the case that $|B_\rho|\leq\alpha^{<\kappa}$.
\end{proof}
\begin{corollary}
Assume that $\lambda < \kappa^{+\kappa}$. Then $\forces^\HS\check\kappa^+=\lambda$.
\end{corollary}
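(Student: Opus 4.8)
The plan is to prove two things about $\HS^G$, where $G\subseteq\RR$ is $V$-generic: (i) every cardinal $\mu$ with $\kappa<\mu<\lambda$ is collapsed to $\kappa$ in $\HS^G$, and (ii) $\lambda$ remains a cardinal in $\HS^G$. Since $\cU$ was chosen with a repeat point, $\kappa$ remains measurable --- and in any case a cardinal --- in $V[G]$, hence in $\HS^G$; so (i) and (ii) together give $(\kappa^+)^{\HS^G}=\lambda$, which is the assertion.

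For (i), fix a cardinal $\mu\in(\kappa,\lambda)$. Recall the standard facts about supercompact Radin forcing that $\bigcup C_G=\lambda$, that $C_G$ is $\ssubset$-linearly ordered, and that every $a\in C_G$ satisfies $|a|=|\otp(a)|<\kappa$. Hence $E_\mu:=\{a\cap\mu\mid a\in C_G\}$ is a $\subseteq$-linearly ordered family of subsets of $\mu$, each of size $<\kappa$, with $\bigcup E_\mu=\mu$. The point is that the canonical $\RR$-name $\dot E_\mu$ for $E_\mu$ has support $F_\mu$: an automorphism $\sigma_h\in F_\mu$, i.e.\ with $h\restriction\mu=\id$, fixes every ordinal below $\mu$ pointwise (as does $h^{-1}$), hence preserves $a\cap\mu$ for every $a\in\power_\kappa(\lambda)$; so it fixes, for each $b\in\power_\kappa(\mu)$, the statement that $b$ has the form $a\cap\mu$ for some $a$ in the Radin club, while leaving every $\check b$ fixed. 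Thus $\sigma_h\dot E_\mu=\dot E_\mu$, so $\dot E_\mu\in\HS$, and since $\HS^G\models\ZF$ we may, inside $\HS^G$, enumerate $E_\mu$ by $\subseteq$ in some order type $\gamma\leq\kappa$, enumerate each of its members increasingly, and flatten the result (via a bijection $\gamma\times\kappa\cong\kappa$ taken from $V$) to obtain a surjection $\kappa\to\mu$. So $\forces^\HS|\check\mu|\leq\check\kappa$; in particular no cardinal of $\HS^G$ lies strictly between $\kappa$ and $\lambda$, and therefore $(\kappa^+)^{\HS^G}\geq\lambda$.

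For (ii), I first note that the hypothesis $\lambda<\kappa^{+\kappa}$ forces $\bar\lambda=|\mathrm{Reg}\cap[\kappa,\lambda]|<\kappa$ (write $\lambda=\kappa^{+\eta}$ with $\eta$ a limit ordinal; then $\lambda<\kappa^{+\kappa}$ gives $\eta<\kappa$, so $[\kappa,\lambda]$ contains fewer than $\kappa$ cardinals), whence $\kappa^{\bar\lambda}=\kappa$ by inaccessibility of $\kappa$ and $\GCH$; and for any cardinal $\alpha<\lambda$ we have $\alpha^{<\kappa}<\lambda$, because $\lambda$ is a limit cardinal. Now suppose $p$ forces (in $\HS$) that $\dot f\colon\check\kappa\to\check\lambda$ is onto, where $\dot f\in\HS$ has support $F_\alpha$ for some cardinal $\alpha<\lambda$. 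For each $\rho<\kappa$ let $B_\rho=\{\xi\mid\exists q\leq p,\ q\forces^\HS\dot f(\check\rho)=\check\xi\}$. By the argument in the proof of \autoref{cor:sets-of-ordinals}: if $q_0,q_1\leq p$ force distinct values of $\dot f(\check\rho)$ with $\pi_\alpha(q_0)=\pi_\alpha(q_1)$, then \autoref{lemma:pseudo-homogeneity} gives $\sigma_h\in F_\alpha$ with $\sigma_h q_0$ compatible with $q_1$; since $\sigma_h\dot f=\dot f$ and $\sigma_h$ fixes $\check\rho$ and all $\check\xi$, any common extension forces two distinct values for $\dot f(\check\rho)$, a contradiction. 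So the value forced for $\dot f(\check\rho)$ depends only on $\pi_\alpha(q)$, giving $|B_\rho|\leq\alpha^{<\kappa}\cdot\kappa^{\bar\lambda}=\max(\alpha^{<\kappa},\kappa)<\lambda$. Then $B:=\bigcup_{\rho<\kappa}B_\rho\in V$ has cardinality $<\lambda$, so $B\subsetneq\lambda$, while $p\forces^\HS\rng\dot f\subseteq\check B$ --- contradicting surjectivity. Hence there is no surjection $\check\kappa\to\check\lambda$ in $\HS$; combining this with (i) (to replace a hypothetical surjection onto $\lambda$ from some $\mu\in(\kappa,\lambda)$ by one from $\kappa$), and with trivial padding when $\mu\leq\kappa$, we conclude that $\lambda$ is a cardinal in $\HS^G$, so $(\kappa^+)^{\HS^G}\leq\lambda$.

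Together, (i) and (ii) give $\forces^\HS\check\kappa^+=\lambda$. I expect the main obstacle to lie in (i) --- in identifying the correct ``localization below $\mu$'' of the generic Radin club and verifying carefully that its name is genuinely $F_\mu$-symmetric --- since the cardinal arithmetic in (ii), where the precise bound $\lambda<\kappa^{+\kappa}$ is consumed, is otherwise routine given \autoref{cor:sets-of-ordinals}.
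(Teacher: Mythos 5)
Your proof is correct and follows essentially the same route as the paper's: part (ii) is exactly the paper's invocation of \autoref{cor:sets-of-ordinals} (with the useful extra observation, elided in the paper, that $\lambda<\kappa^{+\kappa}$ is what makes $\bar\lambda<\kappa$ and hence $\kappa^{\bar\lambda}=\kappa$), and part (i) simply makes explicit, via the $F_\mu$-symmetric name for $E_\mu=\{a\cap\mu\mid a\in C_G\}$, what the paper calls ``the canonical collapse of $\alpha$, given by the Radin club.'' The only detail worth a second glance is that $E_\mu$ is indeed $\subseteq$-well-ordered (a $\subsetneq$-decreasing $\omega$-chain in $E_\mu$ would yield one in $C_G$, contradicting that $C_G$ is $\ssubset$-well-ordered), but this is routine and downward absolute to $\HS^G$.
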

\begin{proof}
If $\alpha\in(\kappa,\lambda)$, then $F_\alpha$ is a support for the canonical collapse of $\alpha$, given by the Radin club. But if $f\colon\kappa\to\lambda$ is a function in the symmetric extension, it has a name with support $F_\alpha$, and the lemma tells us that $f$ can only obtain $\alpha^{<\kappa}\leq\alpha^+<\lambda$ values, and in particular it is not surjective.
\end{proof}
\begin{lemma}\label{lemma:short-filters-lifting}
If $\sF$ has a normal basis of size $<\kappa$, then there is an embedding $i\colon V\to N$ such that $\cS$ is $i$-decomposable. In particular, $\kappa$ is critical in the symmetric extension.
\end{lemma}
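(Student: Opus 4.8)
The plan is to start with a supercompact embedding that is strong enough to ``see'' the entire symmetric system $\cS=\tup{\RR,\sG,\sF}$ and apply \autoref{thm:lifting-criteria}, so the real work is verifying that $\cS$ is $i$-decomposable for a well-chosen $i$. Fix a $(2^{\lambda^{<\kappa}})^+$-supercompact embedding $i\colon V\to N$ with $\crit(i)=\kappa$ and $i(\Lambda)(\kappa)=\lambda$; this is the embedding from which the coherent sequence $\tilde\cU$ (and hence $\cU=\tilde\cU\restriction\rho+1$ for the repeat point $\rho$) was derived, so $i(\cU)$ restricted below $\kappa$ agrees with $\cU$ itself. The point of the repeat point is exactly the classical Radin-forcing fact that $i(\RR)$ factors below a suitable condition $m$ as $\RR\ast\dot\QQ$ where $\dot\QQ$ is (forced to be) the Radin forcing on the top measure sequence $i(\cU)\restriction[\rho,i(\rho))$ below the condition whose stem is $i``\lambda$: because $\rho$ is a repeat point, the measure used in the ultrapower can be ``repeated'' and the tail forcing $\dot\QQ$ is $i(\RR)$-generic-friendly. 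So item (1)(a) of \autoref{def:j-decomposable} is the Radin factorisation, $\pi\colon i(\RR)\restriction m\cong\RR\ast\dot\QQ$ extending $i(p)\mapsto\tup{p,1}$.

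For item (1)(c), the $M$-generic (here $N$-generic) $\dot H$ for $\dot\QQ$ is produced by \autoref{thm:cummings-woodin}: that theorem, applied inside $V$ to the relevant Radin forcing on the top sequence, hands us, in $V$, an $N$-generic filter for the tail forcing compatible with $i(1)$; we promote it to a $\PP$-name $\dot H\in\HS$ (it is essentially a check-name, so symmetric, and we must check $\sym(\dot H)=\sG$, which follows because $\sG$ acts on the top coordinate by permutations of $i``\lambda$ that the generic filter, being built from seeds $i``\lambda_\alpha$, respects). For item (2), the semidirect product structure is forced on us by how $\sG$ sits inside $i(\sG)$: a permutation $h$ of $\lambda$ fixing $\kappa$ and preserving cardinality gives $i(\sigma_h)=\sigma_{i(h)}$, an automorphism of $i(\RR)$; conjugating through $\pi$ splits it as $\tup{\sigma_h,\dot\rho}$ where the second component $\dot\rho$ acts on the tail coordinate, and one checks $\dot\rho``\dot H=\dot H$ because $i(h)$ pushed onto the top stem $i``\lambda$ is (generically) the identity on the seeds actually used — this is where the cardinality-preservation hypothesis on $h$ and the fact that $\dot H$ came from the canonical seed sequence $i``\lambda_\alpha$ get used. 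Item (3), that $i``\sF$ is a basis for $i(\sF)$, is exactly where the size hypothesis enters: if $\sF$ has a normal basis $\{F_{\alpha_\xi}\mid\xi<\delta\}$ of size $\delta<\kappa=\crit(i)$, then $i$ fixes this set (as an indexed family), so $i``\sF$ generates $i(\sF)$; without the smallness hypothesis $i(\sF)$ would be $i(\kappa)$-complete and $i``\sF$ would fail to be cofinal, which is precisely the phenomenon flagged in \autoref{q:optimal-definitions}.

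Once $i$-decomposability is established, \autoref{thm:lifting-criteria} gives an amenable lifting $i^*$ of $i$ from the symmetric extension $W$ of $V$ by $\cS$ to the symmetric extension $N^*$ of $N$ by $i(\cS)$; restricting $i^*$ to $W_{\kappa+1}$ and observing that $N^*_{i^*(\kappa)}$ is transitive yields an elementary embedding $W_{\kappa+1}\to(N^*)_{i^*(\kappa)}$ with critical point $\kappa$, i.e.\ $\kappa$ is critical in $W$. (One should double-check that $\kappa$ is still a cardinal in $W$ — it is inaccessible there because $\RR$ is $(\lambda^{<\kappa})^+$-c.c.\ and everything below $\kappa$ in the Radin club is handled by the factorisation — and that $W_{\kappa+1}$ really is the $V_{\kappa+1}$ of the symmetric extension, which is immediate since $\RR$ does not add subsets of $\kappa$ that are symmetric below the club structure.)

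The main obstacle I expect is item (2)(c), showing $\dot\rho``\dot H=\dot H$: this requires a careful matching between the tail automorphism $\dot\rho$ extracted from $i(\sigma_h)$ via the Radin factorisation isomorphism $\pi$ and the specific $N$-generic $\dot H$ manufactured by \autoref{thm:cummings-woodin}, and it is here that the bookkeeping about which seeds and which diagonal intersections survive (the content of \autoref{lemma:strong-prikry-property}) has to be reconciled with the action of a cardinality-preserving permutation of $\lambda$ on the top coordinate. Everything else is either the classical Radin factorisation, a bookkeeping translation of earlier definitions, or the elementarity argument that is already packaged in \autoref{thm:lifting-criteria}.
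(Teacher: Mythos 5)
Your choice of the embedding $i$ is where the proposal goes wrong. You take $i$ to be a $(2^{\lambda^{<\kappa}})^+$-supercompact ultrapower embedding $V\to N$ and then hope to produce, in $V$, an $N$-generic for the tail of the Radin factorization. But with $N$ that closed (closed under $(2^{\lambda^{<\kappa}})$-sequences), the tail forcing lives in $N$ and has size on the order of $i(\lambda)$; a ground-model filter meeting all of its dense sets in $N$ is exactly what strong closure of $N$ rules out, so item (1)(c) of \autoref{def:j-decomposable} cannot be verified for that $i$. Your attempted patch via \autoref{thm:cummings-woodin} also does not work as stated: that theorem does not hand you a generic filter for a prespecified target model $N$; it constructs its own target model $M$ together with an $M$-generic, and the pair comes as a package. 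You cannot feed it ``the tail forcing over $N$'' and receive an $N$-generic back.

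The paper resolves this by composing two embeddings. First, $j\colon V\to M$ is the ultrapower by the top (repeat-point) measure $U(\kappa,\rho)$ alone, not a supercompact embedding; the repeat point makes $j(\RR)$ below the condition $m_*=\tup{j``\lambda,j``A_*}^\smallfrown\tup{j(\lambda),j(A_*)}$ split as a genuine \emph{product} $\PP\times\QQ$ with $\PP\cong\RR$, not an iteration. Second, \autoref{thm:cummings-woodin} is applied \emph{inside $M$} to get $k\colon M\to N$ and an $N$-generic filter $H$ for $k(j(\RR)\restriction m_*)$ already present in $M$. Then $i=k\circ j$, and $i(\RR)\restriction m\cong\RR\times k(\QQ)$ with $H$ a ground-model $N$-generic for the right side. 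This makes all the decomposability checks essentially trivial: the product structure means $\sG$ acts only on the first coordinate, and the seeds used in the Cummings--Woodin iteration are fixed by $i(h)$ because the club of fixed points of $h_*$ sits in every measure in the sequence. Your instinct about item (2)(c) being the delicate point, and that the seed structure of the Cummings--Woodin generic is what saves it, is correct; but you need the right $i$ (a composition, with the tail generic coming from the inner application of the theorem) before that check can even be posed. The observations about $i``\sF$ generating $i(\sF)$ when the basis has size $<\kappa$ and about lifting via \autoref{thm:lifting-criteria} are fine.
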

\begin{proof}
Recall that $\cU$ is a coherent sequence of supercompact measures on $\power_\kappa(\lambda)$ of length $\rho+1$, where $\rho$ is a repeat point of $\cU$. Let $j\colon V\to M$ be the ultrapower embedding given by $U(\kappa,\rho)$. Thus, $\crit(j)=\kappa$, $j(\kappa)>\lambda$, and $j``\lambda\in M$. 

Let $p = \tup{\lambda,A_*}\in \RR$ be any condition with an empty stem. Define $m_*$ to be $\tup{j``\lambda,j``A_*}^\smallfrown\tup{j(\lambda),j(A_*)}$, this is a condition in $j(\RR)$ which is stronger than $j(p)$. In $M$, the forcing $j(\RR)$ below the condition $m_*$ decomposes into the product $\PP \times \QQ$. Since $\rho$ is a repeat point, the map that sends a condition $p = \tup{d_0, \dots, d_n}\in \RR$ to $p' = \pi_{j``\lambda}^{-1}(p)$ is an isomorphism between $\RR$ and  $\PP$.

Inside $M$, apply \autoref{thm:cummings-woodin} to obtain some elementary $k\colon M\to N$, such that in $M$ there is an $N$-generic filter for $k(j(\RR)\restriction m_*)$ which is compatible with $m=k(m_*)$. Let $i$ denote the composition of these embeddings, $k\circ j\colon V\to N$, and let $H$ be the $N$-generic filter for $k(j(\RR))$. 

We claim that $\cS$ is $i$-decomposable.
\begin{itemize}
\item $i(\RR)\restriction m=k(\PP)\times k(\QQ)\cong\PP\times k(\QQ)\cong\RR\times k(\QQ)$. 
\item The isomorphism extends $j(p)\mapsto\tup{p,1_{k(\QQ)}}$.
\item There is an $N$-generic filter for $k(\QQ)$ in $M$, and therefore there is one in $V$.
\item If $\sigma_h\in\sG$, then $i(\sigma_h)=\sigma_{i(h)}$ acts on $k(\PP)$ pointwise, and has a remainder which is in $i(\sG)$. In particular, this allows us to decompose $i(\sigma_h)$ into $(\sigma_h,\sigma_{h'})$ where $\sigma_{h'}$ is an automorphism of $k(\QQ)$.
\item Moreover, $\sigma_{h'}``H=H$. To see that, recall $B_h=\{x\in\power_\kappa(\lambda)\mid h_*(x)=x\}$ is a club in $\power_\kappa(\lambda)$, and so it must appear in $U(\kappa,\alpha)$ for all $\alpha<o^\cU(\kappa)$. Now let $C_H=\tup{x_\eta\mid\eta<i(\kappa)}$ be the Radin club determined by $H$, then we claim that $h'_*(x_\eta)=x_\eta$ for all $\eta<i(\kappa)$. By the construction of $H$ in the proof of \autoref{thm:cummings-woodin}, $x_\eta$ is a seed for a normal measure on the image of $(i_\eta\circ j)(B_h)$, where $i_\eta$ is the $\eta$th embedding of the construction. Since $(i_\eta\circ j)(B_h)$ is a club, it is the case that $x_\eta\in i(B_h)$.
\item Finally, since $\sF$ has a small basis, its pointwise image is a basis for $j(\sF)$ by elementarity, so $j``\sF$ is a basis for $j(\sF)$.
\end{itemize}
Therefore $i$ lifts to the symmetric extension, and thus $\kappa$ is critical there.
\end{proof}
\begin{proof}[Proof of \autoref{thm:singular-successor}]
For any regular $\mu<\kappa$, take $\lambda=\kappa^{+\mu}$ and with the symmetric Radin system, force below a condition that forces the Radin club to only start above $\mu$. Then in the symmetric extension $\cf(\lambda)=\mu$. Since $\{F_\alpha\mid\alpha<\mu\}$ is a basis for $\sF$, the conditions of \autoref{lemma:short-filters-lifting} hold and $\kappa$ remains critical with a singular successor of cofinality $\mu$.
\end{proof}

The construction, however, does not fit very well with the assumption that $\cf(\kappa^+)=\kappa$ (starting with $\lambda=\kappa^{+\kappa}$), or with a more ambitious assumption that $\kappa^+$ will be measurable (starting with some $\lambda>\kappa$ which is measurable). In both cases we get that $\sup j``\lambda<j(\lambda)$, a fact which is then carried over the $i$ obtained from \autoref{thm:cummings-woodin}, and since $\sF$ is a $\lambda$-complete filter, $\bigcap i``\sF\in i(\sF)$. This means that the symmetric Radin system is not $i$-decomposable, so the embedding does not lift. Which leaves us wide open with the following questions.

\begin{question}
Suppose that $\kappa$ is a critical cardinal. Can $\kappa^+$ be of cofinality $\kappa$? Can it be measurable? Is there an embedding that can be always lifted when forcing with the symmetric Radin system for any limit cardinal $\lambda$?
\end{question}
We strongly suspect that the answer to the first question is positive, which hints that the answer to the second question could be positive as well. If, however, the answer to the first question is negative this implies that a successor of a supercompact cardinal is always regular, since its cofinality cannot be smaller than $\kappa$ itself. In either case, however, the results of this section point out the obvious gap between just having a critical cardinal to having a supercompact cardinal. We also note that Jech's Lemma about the preservation of measurability in symmetric extensions \cite[Lemma~21.17]{Jech2003} can be used to show that the symmetric Radin system would preserve the measurability of $\lambda$. So finding an embedding to be lifted is really the only missing ingredient in this construction.

Finally, an obligatory question about the structure of the cardinals above a critical cardinal which seems natural.
\begin{question}
Can a critical cardinal be the last regular cardinal?
\end{question}
\providecommand{\bysame}{\leavevmode\hbox to3em{\hrulefill}\thinspace}
\providecommand{\MR}{\relax\ifhmode\unskip\space\fi MR }
\providecommand{\MRhref}[2]{%
  \href{http://www.ams.org/mathscinet-getitem?mr=#1}{#2}
}
\providecommand{\href}[2]{#2}


\begin{thebibliography}{1}

\bibitem{BilinskyGitik2012}
Eilon Bilinsky and Moti Gitik, \emph{A model with a measurable which does not
  carry a normal measure}, Arch. Math. Logic \textbf{51} (2012), no.~7-8,
  863--876. \MR{2975433}

\bibitem{BlassEtal:2007}
Andreas Blass, Ioanna~M. Dimitriou, and Benedikt L\"{o}we, \emph{Inaccessible
  cardinals without the axiom of choice}, Fund. Math. \textbf{194} (2007),
  no.~2, 179--189. \MR{2310342}

\bibitem{Bull1978}
Everett~L. Bull, Jr., \emph{Successive large cardinals}, Ann. Math. Logic
  \textbf{15} (1978), no.~2, 161--191. \MR{514229}

\bibitem{Jech2003}
Thomas Jech, \emph{Set {T}heory. {T}he {T}hird {M}illennium {E}dition, revised
  and expanded}, Springer Monographs in Mathematics, Springer-Verlag, Berlin,
  2003. \MR{1940513}

\bibitem{Kanamori:2003}
Akihiro Kanamori, \emph{The higher infinite}, second ed., Springer Monographs
  in Mathematics, Springer-Verlag, Berlin, 2003, Large cardinals in set theory
  from their beginnings. \MR{1994835}

\bibitem{Karagila2016}
Asaf Karagila, \emph{Iterating symmetric extensions}, J. Symb. Log. \textbf{84}
  (2019), no.~1, 123--159. \MR{3922788}

\bibitem{Krueger2007}
John Krueger, \emph{Radin forcing and its iterations}, Archive for Mathematical
  Logic \textbf{46} (2007), no.~3, 223--252.

\bibitem{Spector1988}
Mitchell Spector, \emph{Ultrapowers without the axiom of choice}, J. Symbolic
  Logic \textbf{53} (1988), no.~4, 1208--1219. \MR{973110}

\bibitem{Woodin2010}
W.~Hugh Woodin, \emph{Suitable extender models {I}}, J. Math. Log. \textbf{10}
  (2010), no.~1-2, 101--339. \MR{2802084}

\end{thebibliography}
\end{document}